\newcommand{\de}{\ensuremath{\partial}}
\newcommand{\dee}{\ensuremath{\textrm{d}}}
\newcommand{\inty}[4]{\ensuremath{ \int_{#1}^{#2} \! #3 \, \dee#4 }}
\newcommand{\field}[1]{\mathbb{#1}}
\newcommand{\ip}[2]{\ensuremath{ \left< \left. #1 \right| #2 \right> } }
\newcommand{\Prl}{{P^\perp_{_{\rm RL}}}} 
\newcommand{\Hrl}{\mathcal{H}^\perp_{_{\rm RL}}}
\newcommand{\Prol}{{P^\perp_{_{\rm R0L}}}} 
\newcommand{\Hrol}{\mathcal{H}^\perp_{_{\rm R0L}}}
\renewcommand{\d}{\delta}
\newcommand{\+}{R}
\renewcommand{\-}{L}
\DeclareMathOperator{\sech}{sech}
\newtheorem{definition}{Definition}
\newtheorem{assumption}{Assumption}
\newtheorem{remark}{Remark}
\newtheorem{theorem}{Theorem}
\newtheorem{lemma}{Lemma}
\newtheorem{proposition}{Proposition}
\newtheorem{corollary}[proposition]{Corollary}
\newtheorem{example}{Example}
\let\oldhat\hat
\renewcommand{\hat}[1]{\oldhat{\boldsymbol{#1}}}
\numberwithin{lemma}{section}
\numberwithin{example}{section}
\numberwithin{figure}{section}
\numberwithin{proposition}{section}
\numberwithin{equation}{section}
\numberwithin{theorem}{section}
\numberwithin{remark}{section}
\numberwithin{definition}{section}
\numberwithin{assumption}{section}
\title{Dirac operators and domain walls}
\author{Jianfeng Lu}
\address{Department of Mathematics, Department of Physics, and Department of Chemistry, Duke University, Box 90320, Durham, NC 27708}\email{jianfeng@math.duke.edu}
\author{Alexander B. Watson}
\address{Department of Mathematics, Duke University, Box 90320, Durham, NC 27708}
\email{abwatson@math.duke.edu}
\author{Michael I. Weinstein}
\address{ Department of Applied Physics and Applied Mathematics,               Department of Mathematics, Columbia University, New York, NY 10025}
\email{miw2103@columbia.edu}
\begin{document}

\maketitle

\begin{abstract} 
We study the eigenvalue problem for a one-dimensional Dirac operator with a spatially varying ``mass'' term. It is well-known that when the mass function has the form of a kink, or \emph{domain wall}, transitioning between strictly  positive and strictly negative asymptotic mass, $\pm\kappa_\infty$, at $\pm\infty$, the Dirac operator has a simple eigenvalue of zero energy (geometric multiplicity equal to one) within a  gap in the continuous spectrum, with corresponding exponentially localized \emph{zero mode}. 

We consider the eigenvalue problem for the one-dimensional Dirac operator with mass function defined by ``glue-ing'' together $n$ domain wall-type transitions, assuming that the distance between transitions, $2 \d$, is sufficiently large, focusing on the illustrative cases $n = 2$ and $3$. When $n = 2$
we prove that the Dirac operator has two real simple eigenvalues of opposite sign and of order $e^{- 2 |\kappa_\infty| \d}$. The associated eigenfunctions are, up to $L^2$ error of order $e^{- 2 |\kappa_\infty| \d}$, linear combinations of shifted copies of the single domain wall zero mode. For the case $n = 3$, we prove the Dirac operator has two non-zero simple eigenvalues as in the two domain wall case and a simple eigenvalue at energy zero. The associated eigenfunctions of these eigenvalues can again, up to small error, be expressed as linear combinations of shifted copies of the single domain wall zero mode. When $n > 3$ no new technical difficulty arises and the result is similar. Our methods are based on a Lyapunov-Schmidt reduction/Schur complement strategy, which maps the Dirac operator eigenvalue problem for eigenstates with near-zero energies 
to the problem of determining the kernel of an $n \times n$ matrix reduction, which depends nonlinearly on the eigenvalue parameter.

The class of Dirac operators we consider controls the bifurcation of topologically protected ``edge states'' from Dirac points (linear band crossings) for classes of Schr\"odinger operators with domain-wall modulated periodic potentials in one and two space dimensions. The present results may be used to construct a rich class of defect modes in periodic structures modulated by multiple domain walls. 
\end{abstract} 
\tableofcontents

\section{Introduction} 

It is well-known that spatially localized defects in crystalline 
media, described by the Schr\"odinger equation with a periodic
potential, may give rise to bound states localized to the defect.  The underlying periodic differential operator
has spectrum which is continuous  and
a spatially localized perturbation, which breaks translation
invariance, induces the bifurcation of discrete eigenvalues (with corresponding defect eigenstates) from a
continuous spectral band edge into a spectral gap of the unperturbed operator. See, for example, \cite{Deift-Hempel:86,Figotin-Klein:97,Figotin-Klein:98,BS:03,BS:06,Borisov-Gadylshin:08,parzygnat2010sufficient,Borisov11,Hoefer-Weinstein:11,DVW:14,DVW:15,Drouot1,Drouot2,Drouot3}. This paper is related to a mechanism for the emergence of defect states due to perturbations which are not spatially localized.
Such perturbations may arise in models of dislocations in crystals; see, for example, \cite{Korotyaev:00,Hempel-Kohlmann:11}.

The present work relates to a class of non-compact line defect ({\it
  edge}) perturbations. These are defined via a domain wall
interpolation between ``gapped'' asymptotic periodic structures
(deformed two-dimensional honeycomb structures), studied in
\cite{FLW-APDE:16,FLW-2DMat:16}. Such perturbations were proved to
induce bifurcating branches of ``edge modes'' from Dirac points,
conical intersections of spectral bands of the bulk honeycomb
structure. These modes are propagating (plane-wave like) in the
direction parallel to the edge and are localized transverse to the
edge. Their transverse localization is determined by the eigenstates
of an effective one-dimensional Dirac operator  
$\mathcal{D}_\kappa=i\sigma_3\partial_x +\kappa(x)\sigma_1$,
where $\kappa(x)$ is a spatially varying mass term. The function $\kappa(x)$  enters as a  {\it domain wall function}  in
the definition of the line-defect; $\kappa(x)$ transitions between
asymptotic values of opposite sign, $\pm\kappa_\infty$, as the
distance from the edge tends to infinity on opposite sides of the
edge; see Figure \ref{fig:kappa}. For the formal derivation of the operator $\mathcal{D}_\kappa$ in this context, see Section 6 of \cite{FLW-APDE:16}, in particular (6.21-22). Analogous results were
obtained for defect modes in a class of one-dimensional modulated
periodic structures in \cite{FLW-PNAS:14,FLW-Memoirs:17,DFW:18}. A
photonic realization of such structures is studied in
\cite{PRA_photonic:16,Fu-etal:16}. 

The operator $\mathcal{D}_\kappa$, for $\kappa$ of the above type, 
always has a zero energy eigenstate and, in general, has an odd number of eigenvalues in the
spectral gap.
The zero mode is {\it topologically
  protected}; an arbitrary localized perturbation of $\kappa$ does not destroy 
  the eigenvalue at zero energy. The persistence of this zero-energy eigenstate implies, by the above discussion, the persistence of the corresponding  family of bifurcating edge modes against arbitrary spatially localized (even large) perturbations of $\kappa$. 
  The sense in which these edge modes inherit the topologically protected character of the zero mode of $\mathcal{D}_\kappa$ is studied in \cite{Dr-top}.

 The scientific and technological interest in  topologically robust  modes is due to their  
potential as highly robust channels for transmission of energy and information; see \cite{2014LuJoannopoulosSoljacic,2017KhanikaevShvets,ozawa_etal:18} for recent reviews in the photonic setting.  \\

\noindent Our study concerns detailed properties of the Dirac operator $\mathcal{D}_\kappa$.\\

\noindent {\it Question: Suppose we ``glue together'' two or more
  domain walls (see Figure~\ref{fig:kappa_L} and
  Figure~\ref{fig:3_dw_kappa}, for example). What can be said of the
  edge states of such structures? Can we construct defect modes of the
  Dirac operator which are approximated by weighted superpositions of
  spatial-translates of isolated domain wall zero-energy eigenmodes?}
   \\
 
The goal of this article is to show that such constructions are
possible provided the separation between the domain wall transitions
(cores) is sufficiently large.  In particular we show for any integer
$N\ge1$:
\begin{enumerate}
\item[(a)] For multiple separated domain walls with $2N$ transitions
  (e.g. Figure~\ref{fig:kappa_L}, $N=1$), the Dirac operator has $2N$ simple non-zero eigenvalues, which are exponentially near zero. Moreover,   zero is not an eigenvalue. 
\item[(b)] For multiple separated domain walls with $2N+1$ transitions
  (e.g. Figure~\ref{fig:3_dw_kappa}, $N=1$), the Dirac operator has
  $2N$ simple non-zero eigenvalues and a simple eigenvalue at zero.
\end{enumerate}
  
By the correspondence outlined above, between eigenmodes of the
effective Dirac operator and edge states of the perturbed
Schr\"{o}dinger operator, there exists a rich family of bound states
in two-dimensional crystalline media with multiple parallel edge
defects. Note that bound states seeded by non-zero eigenvalues of the
effective one-dimensional Dirac operator do not share the same
robustness as those seeded by exact zero modes. Such modes may be
destroyed by appropriate localized deformation of the structure; for
example the double domain-wall function displayed in Figure
\ref{fig:kappa_L} may be deformed to the constant function $1$ by
adding an appropriate localized perturbation.

For simplicity of exposition, in this work we focus mostly on the illustrative cases of two and three domain walls, although the theory we present extends naturally to the more general setting outlined above. For the simplest case of two domain walls we prove (Theorem \ref{th:main_theorem}) the following:
\begin{quote}
\it  Let $\mathcal{D}_{\kappa^\d}=i\sigma_3\partial_x\ +\ \kappa^\delta(x) \sigma_1$ denote the one-dimensional Dirac operator with mass function $\kappa^\d(x)$, having the form of two domain walls separated by a distance $2 \d$ (Figure \ref{fig:kappa_L}). Then, for sufficiently large $\d$, the operator $\mathcal{D}_{\kappa^\d}$ has two simple  non-zero eigenvalues, $E$, of order $e^{- 2 \kappa_\infty \d}$ where $\kappa_\infty > 0$ is a constant. The associated eigenfunctions of these eigenvalues are, up to error of order $e^{- 2 \kappa_\infty \d}$ in $L^2$, linear combinations of shifted copies of the single domain wall zero mode (Figure \ref{fig:two_domain_walls_modes}). 
\end{quote}
%
We remark that the discrete spectrum of $\mathcal{D}_\kappa$ is simple;
 see \cite{DFW:18}.
Theorem \ref{th:main_theorem} and its variants for three and $n > 3$ domain walls (Theorems \ref{th:3_dw} and \ref{th:n_theorem}, respectively) are proved
using a Lyapunov-Schmidt reduction/Schur complement strategy (Section
\ref{sec:strategy_of_proof}). For the case of two domain walls, the
eigenvalue problem on $L^2(\mathbb{R})$ is projected onto a natural
$2$--dimensional subspace yielding an equivalent ``nonlinear
eigenvalue problem'' on $\mathbb{C}^2$.  The original eigenvalue
problem has an eigenstate with energy in a neighborhood of zero if and
only if $\det M_{_{2\times2}}(E,\delta)=0$, where
$M_{_{2\times2}}(E,\delta)$ is a $2\times 2$ matrix, which depends
nonlinearly and analytically on the eigenvalue ($E$) and separate
parameter ($\delta$).  The two dimensional subspace is given in terms
of spatial translates of the single domain-wall zero energy
eigenstate, centered on the domain wall transitions of
$\kappa^\delta$.
That this reduction is valid for large separations between domain walls, $\delta$,  follows from the energy estimate  of Proposition \ref{prop:prop_on_DL}. The proof, given in Appendix \ref{sec:proofs_1}, uses a spatial partition of unity to express the Hamiltonian in terms of localized operators, near and away from the domain wall ``cores''. Detailed information about the Dirac operator eigenvalues and bound states then follows from a careful  expansion of $M_{_{2\times2}}(E,\delta)$  for large $\delta$ and $E$ in an appropriate compact set (Proposition  \ref{lem:bounds_on_terms_in_M}, proved in Appendix \ref{sec:proofs_2}). 

In the case of $n$ equally spaced domain walls the reduction is to a nonlinear eigenvalue problem for an $n \times n$ matrix, $M_{_{n\times n}}(E,\delta)$. We present the general result (Theorem \ref{th:n_theorem}), giving details of the proof for the case $n=3$ in Section \ref{sec:3_dom_wall_case} 
(Theorem \ref{th:3_dw}; bound states plotted in Figure \ref{fig:three_domain_walls_modes}). Our methods extend to the case of non-equally spaced arrangements of domain walls; see Remark \ref{rem:remark_on_nonequal_spacing}. 

The one-dimensional Dirac equation with spatially varying mass is closely related with the Su-Schrieffer-Heeger (SSH) model of polyacetylene \cite{1980SuSchriefferHeeger}. This operator also plays a role in \cite{1976JackiwRebbi} in the context of quantum field theory. Continuum models of polyacetylene which incorporate the effect of lattice distortions have also been developed \cite{1980TakayamaLin-LiuMaki}. Solutions of such models describing two well-spaced ``kinks'' were derived in \cite{1982CampbellBishop}. 

Our results have their counterparts in the context of Schr\"odinger operators with two or more \emph{potential wells} related by a symmetry in the semi-classical regime \cite{1980Harrell,1984Simon,1984HelfferRobert,1984HelfferSjostrand,1985HelfferSjostrand,1985KirschSimon,1986Nakamura,1986Martinez,1987KirschSimon,1988MartinezRouleux,1988GerardGrigis,Dimassi-Sjoestrand:99}. Indeed, after a change of basis, the square of the Dirac operator we consider is diagonal with entries given by such Schr\"odinger operators, more precisely with the form of ``Witten Laplacians'' \cite{1982Witten,CyconFroeseKirschSimon}. In Appendix \ref{ap:remark_on_schrodinger_link} we present the details of this connection and sketch an alternative proof of our main results which relies on the well-established semiclassical analysis of such operators. Since we are able to give more direct and elementary proofs we do not make further use of this connection in this work. The analogous problem for Dirac operators (again with potentials given by well-separated wells) was studied in 
\cite{1983HarrellKlaus,1985Wang}. 

\subsection*{Acknowledgements} 
The authors thank Alexis Drouot for stimulating discussions. 
Part of this research was done while M.I.W. was Bergman Visiting Professor at Stanford University and he wishes to express his gratitude to the Department of Mathematics for its hospitality. This work was supported in part by the U.S. National Science Foundation grants DMS-1454939 (J.L.); DMS-1412560, DMS-1620418, and  Simons Foundation Math + X Investigator Award \#376319 (M.I.W.). 

\subsection{Notation} The Hilbert space $\mathcal{H} = L^2(\field{R};\field{C}^2)$ may be concretely realized as 2-vectors of $L^2(\field{R})$ functions:
\begin{equation}
	\mathcal{H} = \left\{ f = \begin{pmatrix} f_1 \\ f_2 \end{pmatrix} : \text{for } j \in \{1,2\}, f_j \in L^2(\field{R})  \right\}.
\end{equation}
We adopt the following notation for the standard inner product and induced norm on $\mathcal{H}$:
\begin{equation} \label{eq:def_of_H}
\begin{split}
	&\ip{f}{g}_{\mathcal{H}} := \sum_{j = 1,2} \ip{f_j}{g_j}_{L^2(\field{R})},\quad \|f\|_{\mathcal{H}} =\sqrt{\ip{f}{f}_{\mathcal{H}} }.
\end{split}
\end{equation}
We also introduce Sobolev spaces $\mathcal{H}^s=H^s(\mathbb{R};\mathbb{C}^2)$, for $s\ge0$ with $\mathcal{H}^0=\mathcal{H}$. 

For complex vectors $v, w$ in $\field{C}^2$, we will write their inner product and the norm induced by this inner product as: 
\begin{equation}
	\ip{v}{w} := \sum_{j = 1,2} \overline{v}_j w_j, \quad | v | := \sqrt{ \ip{v}{v} }.
\end{equation}
With this notation in hand, a short manipulation of the definition of the $\mathcal{H}$-inner product shows that:
\begin{equation}
	\ip{ f }{ g }_\mathcal{H} = \inty{\field{R}}{}{ \ip{f(x)}{g(x)} }{x}, \quad \| f \|_\mathcal{H}^2 = \inty{\field{R}}{}{ | f(x) |^2 }{x}.
\end{equation}
Analogous remarks apply to the Sobolev spaces $\mathcal{H}^s,\ s\ge0$.

\section{Statement of results} \label{sec:statement_of_results}
We now present our results in more detail, focusing first on the simplest case of two domain walls. We define $\mathcal{D}_{\kappa^\d}$ to be the Dirac operator in one spatial dimension with matrix potential $\kappa^\d(x)\sigma_1$, where $\kappa^\d(x)$ denotes a spatially varying mass depending on a parameter $\d$: 
\begin{equation} \label{eq:def_dirac}
	\mathcal{D}_{\kappa^\d} := i \de_x \sigma_3 + \kappa^\d(x) \sigma_1.
\end{equation}
Here, $\sigma_1$ and $ \sigma_3$ denote the usual Pauli matrices: 
\begin{equation}
	\sigma_1 := \begin{pmatrix} 0 & 1 \\ 1 & 0 \end{pmatrix}, \quad \sigma_3 := \begin{pmatrix} 1 & 0 \\ 0 & -1 \end{pmatrix}.
\end{equation}
We consider the eigenvalue problem: 
\begin{equation} \label{eq:eigenvalue_problem}
	\mathcal{D}_{\kappa^\d} \alpha = E \alpha,\quad \alpha \in \mathcal{H} := L^2(\field{R};\field{C}^2).
\end{equation}

Let $\kappa_\infty$ denote a fixed positive constant and assume $\d>1$ (to be chosen sufficiently large at a later stage).  
We define what we mean by a {\it domain wall mass function}  or simply {\it domain wall}:
\begin{definition}[Domain wall]\label{s-dw}
We call $\kappa$ a \emph{domain wall function} or \emph{one domain wall function} if 
\begin{enumerate}
\item $\kappa\in C^1(\mathbb{R})$
\item  $\kappa$ is  odd (i.e. $\kappa(-x) = - \kappa(x)$), and 
\item $\kappa$ is piecewise constant except on a compact set,
  $[-1, 1]$:
\begin{equation} \label{eq:properties_of_kappa}
	\kappa(x) = \begin{cases} - \kappa_\infty &\text{if } x \leq -1; \\ \kappa_\infty &\text{if } x \geq 1. \end{cases}
\end{equation}
\end{enumerate}
If $\kappa$ is in addition monotonic, we call $\kappa$ a \emph{monotonic domain wall function}. 
%
\end{definition}
Definition \ref{s-dw}, in particular the piecewise constant assumption (3) along with the condition that $\kappa_\infty \neq 0$, ensures that the single domain wall operator \eqref{eq:single_dom_wall_operator} models states at a transition between two distinct structures.
%
%
%
\begin{remark}
The class of $\kappa^\d(x)$ we study in this paper are piecewise equal to a one domain wall function $\kappa$ (or its reflection) and such that the  $\mathcal{D}_\kappa$ \eqref{eq:single_dom_wall_operator}  satisfies a spectral gap assumption (Assumption \ref{as:assumption_on_D}). We will usually have in mind a monotonic domain wall function, as in Figures \ref{fig:kappa}, \ref{fig:kappa_L}, and \ref{fig:3_dw_kappa}, but monotonicity is not necessary for our proofs. See also Remarks \ref{rem:weak_dw} and \ref{rem:remark_on_weaker_assumptions} on relaxations of Definition \ref{s-dw} and Assumption \ref{as:assumption_on_D} under which our results still hold.
\end{remark} 

\begin{remark} \label{rem:weak_dw}
  Assuming that $\kappa$ is odd and varies only on a compact set,
  which we can take to be $[-1,1]$, simplifies the proof and
  statements of results.  Our methods extend to the case where
  $\kappa$ satisfies the weaker assumption that:
\begin{equation} \label{eq:weaker_1}
	\lim_{x \rightarrow \infty} \kappa(x) = \kappa_\infty \quad \lim_{x \rightarrow - \infty} \kappa(x) = - \kappa_\infty
\end{equation}
and approaches these limits sufficiently rapidly. For example, if the integrals:
\begin{equation} \label{eq:weaker_2}
	\inty{0}{\infty}{ | \kappa(y) - \kappa_\infty | }{y} \quad \inty{-\infty}{0}{ | \kappa(y) + \kappa_\infty | }{y} 
\end{equation}
are finite. Our methods extend also to the case where $\kappa(+\infty)\kappa(-\infty)<0$ and $|\kappa(+\infty)|\ne|\kappa(-\infty)|$. An example of a function satisfying properties \eqref{eq:weaker_1}-\eqref{eq:weaker_2} is $\kappa(x) = \tanh(x)$. Our results extend also to the case where $\kappa$ is less regular than $C^1$, for example when $\kappa$ is the signum function:
\begin{equation}
	\text{\emph{sgn}}(x) := \begin{cases} 1 & x \geq 0 \\ -1 & x < 0 \end{cases}
\end{equation}
which is merely in $L^\infty$. We make use of regularity of $\kappa$ in order to give an elementary proof of Corollary \ref{F0} which requires an elliptic regularity estimate for the operator $D_{\kappa^\delta}$. When $\kappa \in L^\infty$ the result still holds, but the proof requires more machinery:
see the proof of Theorem 5.6 in \cite{HislopSigal} for example.
\end{remark}

We start by summarizing some general properties satisfied by Dirac operators \eqref{eq:def_dirac} with mass functions which converge sufficiently fast to non-zero constants at $\pm \infty$, such as the operator \eqref{eq:kappa_L}, and more generally the $n$ domain wall operators we consider below.
\begin{theorem} \label{th:on_dirac}
Consider the Dirac operator:
\begin{equation}
    \mathcal{D}_{\kappa} = i \de_x \sigma_3 + \kappa(x) \sigma_1
\end{equation}
where the real variable mass function $\kappa(x)$ satisfies: 
\begin{equation} \label{eq:lim_x}
    \lim_{|x| \rightarrow \infty} \kappa^2(x) = \kappa^2_\infty
\end{equation}
for some constant $\kappa_\infty > 0$, and that the limit in \eqref{eq:lim_x} is approached sufficiently fast that the integral:
\begin{equation} \label{eq:fast_conv}
	\inty{0}{\infty}{ | \kappa^2(y) - \kappa^2_\infty | }{y} 
\end{equation}
converges. Then $\mathcal{D}_{\kappa}$ has the following properties:
\begin{enumerate}
\item $\mathcal{D}_{\kappa}$, with domain $\mathcal{H}^1 = H^1(\field{R};\field{C}^2)$, is self-adjoint with respect to the inner product on $\mathcal{H}$.
\item The spectrum of $\mathcal{D}_{\kappa}$ is symmetric about $0$.
\item The essential spectrum of $\mathcal{D}_{\kappa}$ is $(-\infty,-\kappa_\infty] \cup [\kappa_\infty,\infty)$.
\item $\mathcal{D}_{\kappa}$ has finitely many eigenvalues in the gap, $(- \kappa_\infty,\kappa_\infty)$, in  essential spectrum .
\item Eigenvalues in the gap between essential spectrum are simple.
\item If $0$ is an eigenvalue of $\mathcal{D}_{\kappa}$, then there are an odd number of eigenvalues in the gap between essential spectrum. If not, there are an even number of eigenvalues in the gap between essential spectrum.
\item $\mathcal{D}_{\kappa}$ has a simple eigenvalue at energy $E=0$ if and only if $\kappa(\infty) \kappa(- \infty) < 0$. When $\lim_{x \rightarrow \infty} \kappa(x) = \kappa_\infty > 0$, the corresponding $0$-eigenspace is spanned by the ($\mathcal{H}$-normalized) function: 
\begin{equation} \label{eq:zero_mode}
	\alpha_\star(x) := \gamma \begin{pmatrix} 1 \\ i \end{pmatrix} e^{- \inty{0}{x}{ \kappa(y) }{y} },\qquad
	 \gamma := \frac{1}{ \sqrt{2} \| e^{- \inty{0}{x}{ \kappa(y) }{y} } \|_{L^2} }.
\end{equation}
The choice of $\alpha_\star$ is unique up to multiplication by a complex constant of modulus one. By (\ref{eq:properties_of_kappa}), there exists a constant $C > 0$ depending only on $\kappa$ such that:
\begin{equation} \label{eq:alpha_for_x_large}
	| \alpha_\star(x) | \leq C e^{- \kappa_\infty |x|}, \quad | \de_x \alpha_\star(x) | \leq C e^{- \kappa_\infty |x|}.
\end{equation}
When $\lim_{x \rightarrow \infty} \kappa^\d(x) = - \kappa_\infty < 0$, then the $0$-eigenspace is spanned by
\begin{equation} \label{eq:zero_mode_flip}
	\alpha_\star(x) := \gamma \begin{pmatrix} 1 \\ - i \end{pmatrix} e^{\inty{0}{x}{ \kappa(y) }{y} },\qquad
        \gamma := \frac{1}{ \sqrt{2} \| e^{\inty{0}{x}{ \kappa(y) }{y} } \|_{L^2} }.
\end{equation}
\end{enumerate}
\end{theorem}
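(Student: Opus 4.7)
The seven items split into three roughly independent themes: operator-theoretic foundations (parts 1--3), structure of the discrete spectrum (parts 4--6), and an explicit construction of the zero mode (part 7). The plan is to handle them in that order, since each block builds on the previous.

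For parts (1)--(3), I would first obtain self-adjointness on $\mathcal{H}^1$ by treating $\mathcal{D}_\kappa$ as a bounded symmetric perturbation of the free Dirac operator $i\sigma_3\partial_x$: under the asymptotic-mass hypothesis together with the $C^1$ setting we have in mind, $\kappa(x)\sigma_1$ is a bounded multiplication operator on $\mathcal{H}$, and Kato--Rellich applies. Symmetry of the spectrum about zero (part 2) is a chiral-symmetry argument: $\sigma_2$ anticommutes with both $\sigma_1$ and $\sigma_3$, so $\sigma_2\mathcal{D}_\kappa\sigma_2=-\mathcal{D}_\kappa$, and conjugation by $\sigma_2$ sends eigenvalue $E$ to $-E$. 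For the essential spectrum, I would square the operator, obtaining
\begin{equation*}
\mathcal{D}_\kappa^2 \;=\; -\partial_x^2 \;+\; \kappa^2(x)\,I \;-\; \kappa'(x)\,\sigma_2.
\end{equation*}
Hypothesis \eqref{eq:fast_conv} places $\kappa^2-\kappa_\infty^2$ in $L^1$, and $\kappa'$ is supported on the compact set where $\kappa$ is non-constant, so $\mathcal{D}_\kappa^2-\kappa_\infty^2 I$ is a relatively compact perturbation of $-\partial_x^2$. Weyl's theorem then yields $\sigma_{\mathrm{ess}}(\mathcal{D}_\kappa^2)=[\kappa_\infty^2,\infty)$, and the spectral-mapping relation $E\in\sigma(\mathcal{D}_\kappa)\Leftrightarrow E^2\in\sigma(\mathcal{D}_\kappa^2)$, combined with part (2), gives the two-sided essential spectrum claimed.

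For parts (4)--(6), finiteness of the discrete spectrum in the gap follows from a Birman--Schwinger bound applied to $\mathcal{D}_\kappa^2-\kappa_\infty^2$, using the $L^1$ decay of the perturbation. Simplicity (part 5) I would prove by noting that $\mathcal{D}_\kappa\alpha=E\alpha$ is a first-order $2\times 2$ linear ODE system; an asymptotic analysis at $\pm\infty$ (where the system has coefficients converging to a matrix with eigenvalues $\pm\sqrt{\kappa_\infty^2-E^2}$) shows that the space of solutions decaying at $+\infty$ is one-dimensional, as is the space of solutions decaying at $-\infty$, so any $L^2$ eigenspace is at most one-dimensional. Part (6) is then immediate: by (2) non-zero eigenvalues come in $\pm E$ pairs, so the parity of the total count in the gap equals the parity of the multiplicity at $E=0$, which by (5) is $0$ or $1$.

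For part (7), I would solve the zero-energy equation directly. Substituting the ansatz $\alpha(x)=f(x)(1,i)^T$ into $\mathcal{D}_\kappa\alpha=0$ decouples the system and reduces it to the scalar ODE $f'+\kappa f=0$, with solution $f(x)=e^{-\int_0^x\kappa(y)\,dy}$. By the asymptotic form of $\kappa$, the exponent grows like $-\kappa_\infty|x|$ at both ends precisely when $\kappa(+\infty)>0$ and $\kappa(-\infty)<0$, giving the $L^2$ integrability and the decay bound \eqref{eq:alpha_for_x_large} in one stroke (the derivative bound follows from $f'=-\kappa f$ together with boundedness of $\kappa$). The opposite orientation $\kappa(+\infty)<0$ is handled by swapping $i\mapsto -i$ in the ansatz, which corresponds to the other one-dimensional eigenspace of $\sigma_1$. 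Uniqueness up to a unit complex scalar follows from the simplicity statement (5). The main obstacle in the argument is the relative compactness step for part (3): it is clean under the $C^1$ hypothesis of Definition \ref{s-dw}, but if one relaxes to $\kappa\in L^\infty$ (the signum example of Remark \ref{rem:weak_dw}), then $\kappa'$ is only distributional and pointwise-multiplication arguments break down, forcing a quadratic-form or resolvent approach as in \cite{HislopSigal}.
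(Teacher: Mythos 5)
Your proposal is correct, and for most parts it runs parallel to the paper's (very terse) proof, which largely defers to references: identical $\sigma_2$ chiral-symmetry argument for (2), the same squared-operator/Schr\"odinger reduction for (4), the same parity argument for (6), and the same direct computation for (7), including the correct reduction of the ansatz $f(x)(1,i)^{\top}$ to $f'+\kappa f=0$. The two places where you genuinely diverge are (3) and (5). For (3) the paper applies Weyl's criterion directly to $\mathcal{D}_\kappa$ (constructing singular sequences from plane waves supported where $\kappa\approx\pm\kappa_\infty$), whereas you square the operator and invoke relative compactness plus spectral mapping; your route is valid but note that $\mathcal{D}_\kappa^2$ contains the term $\kappa'\sigma_2$, which the hypotheses \eqref{eq:lim_x}--\eqref{eq:fast_conv} of the theorem do not control --- the direct Weyl-sequence argument never differentiates $\kappa$ and therefore covers the full stated generality (you partially flag this yourself, but the issue already arises for merely continuous $\kappa$ satisfying \eqref{eq:fast_conv}, not only for the signum example). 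For (5) the paper simply cites Theorem 4 of Appendix C of the reference on one-dimensional Dirac operators, while you supply the standard self-contained ODE argument: for $|E|<\kappa_\infty$ the constant-coefficient limits at $\pm\infty$ have one growing and one decaying direction, so the subspace of solutions decaying at $+\infty$ is one-dimensional and every eigenspace has dimension at most one. That argument is correct and is a genuine improvement in self-containedness over the citation; the only point to make explicit is the Levinson-type asymptotic comparison needed to transfer the dichotomy from the limiting constant-coefficient system to the actual one, which again uses the integrability of $\kappa-\kappa(\pm\infty)$.
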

\begin{proof}
For (1), see, for example \cite{reed_simon_4}. (2) follows from the observation that $\mathcal{D}_{\kappa^\d}$ anti-commutes with the Pauli matrix $\sigma_2$:
\begin{equation}
    \mathcal{D}_{\kappa^\d} \sigma_2 = - \sigma_2 \mathcal{D}_{\kappa^\d}, \quad \sigma_2 = \begin{pmatrix} 0 & - i \\ i & 0 \end{pmatrix}.
\end{equation}
(3) follows from Weyl's criterion. To see (4), note that the squares of eigenvalues of $\mathcal{D}_{\kappa^\d}$ are eigenvalues of the squared Dirac operator $U \mathcal{D}_{\kappa^\d}^2 U^*$ displayed in \eqref{eq:twoo}, where $U$ is the unitary matrix \eqref{eq:U}. Assumption \eqref{eq:fast_conv} amounts to an estimate on the rate of decay of the potentials of the Schr\"odinger operators on the diagonal of \eqref{eq:twoo}. (4) then follows by standard arguments (see \cite{reed_simon_4}, for example) for Schr\"odinger operators. For (5), see Theorem 4 of Appendix C of \cite{DFW:18}. (6) follows from (2), (4), and (5). (7) can be verified by direct calculation. 
\end{proof}

We now give an explicit construction of a monotonic domain wall function.
\begin{example} \label{ex:sdw_f}
Let 
\begin{equation}
	\nu(\xi) := \begin{cases} 0 & \xi \leq 0 \\ e^{-1/\xi} & \xi > 0 \end{cases}.
\end{equation}
Note that $\nu(\xi)$ is a smooth, monotone function which approaches 1 as $\xi \rightarrow \infty$. Then the function: 
\begin{equation} \label{eq:explicit_dw_func}
	\kappa(x) = 2 \left( \frac{ \nu \left( \frac{ x + 1 }{ 2 } \right) }{ \nu \left( \frac{ x + 1 }{ 2 } \right) + 
	\nu \left( 1 - \frac{x + 1}{2} \right) } \right) - 1
\end{equation}
is a monotone function satisfying conditions (1)-(3) of Definition \ref{s-dw} with $\kappa_\infty = 1$. This function is plotted in Figure \ref{fig:kappa}. 
\end{example}

Now let $\kappa(x)$ denote a one domain wall function and define, for any $\delta>1$, the ``two domain wall'' mass function, $\kappa^\d$ as follows:
\begin{equation} \label{eq:kappa_L}
	\kappa^\d(x) = \begin{cases} - \kappa(x + \d) &\text{ for } - \infty \leq x \leq 0  \\ \kappa(x - \d) &\text{ for } 0 \leq x \leq \infty \end{cases}
\end{equation}  
See Figure \ref{fig:kappa_L}. Since we have chosen $\d > 1$ it follows that $\kappa^\d \in C^1$. By assumption \eqref{eq:properties_of_kappa} on $\kappa(x)$ we have that $(\kappa^\d(x))^2 - \kappa_\infty^2$ is supported on a compact set and hence \eqref{eq:kappa_L} satisfies the hypotheses of Theorem \ref{th:on_dirac}.


\begin{figure} 
\begin{subfigure}[b]{.45\textwidth}
\includegraphics[scale=.35]{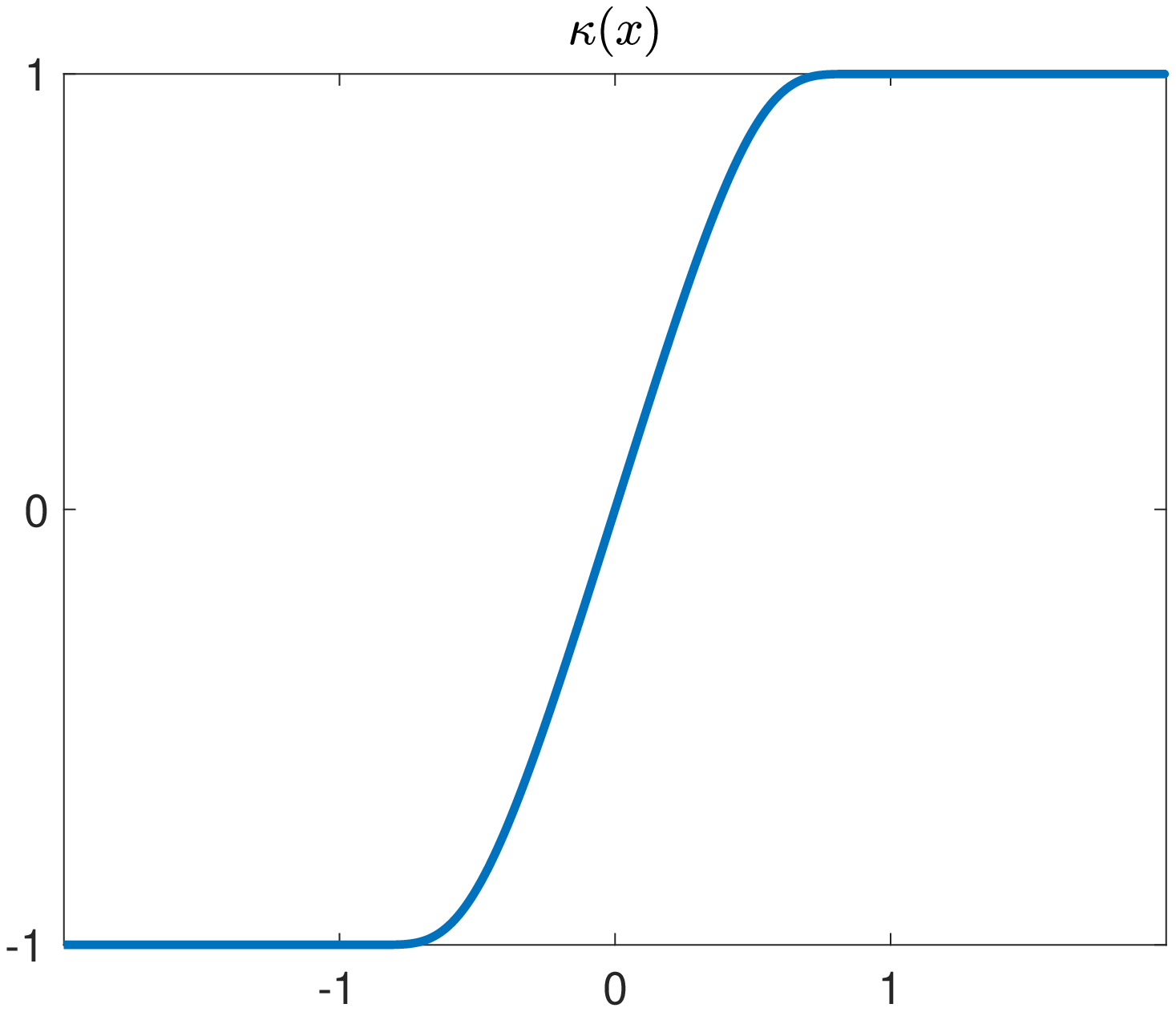}
\caption{}
\label{fig:kappa}
\end{subfigure}
\begin{subfigure}[b]{.45\textwidth}
\includegraphics[scale=.35]{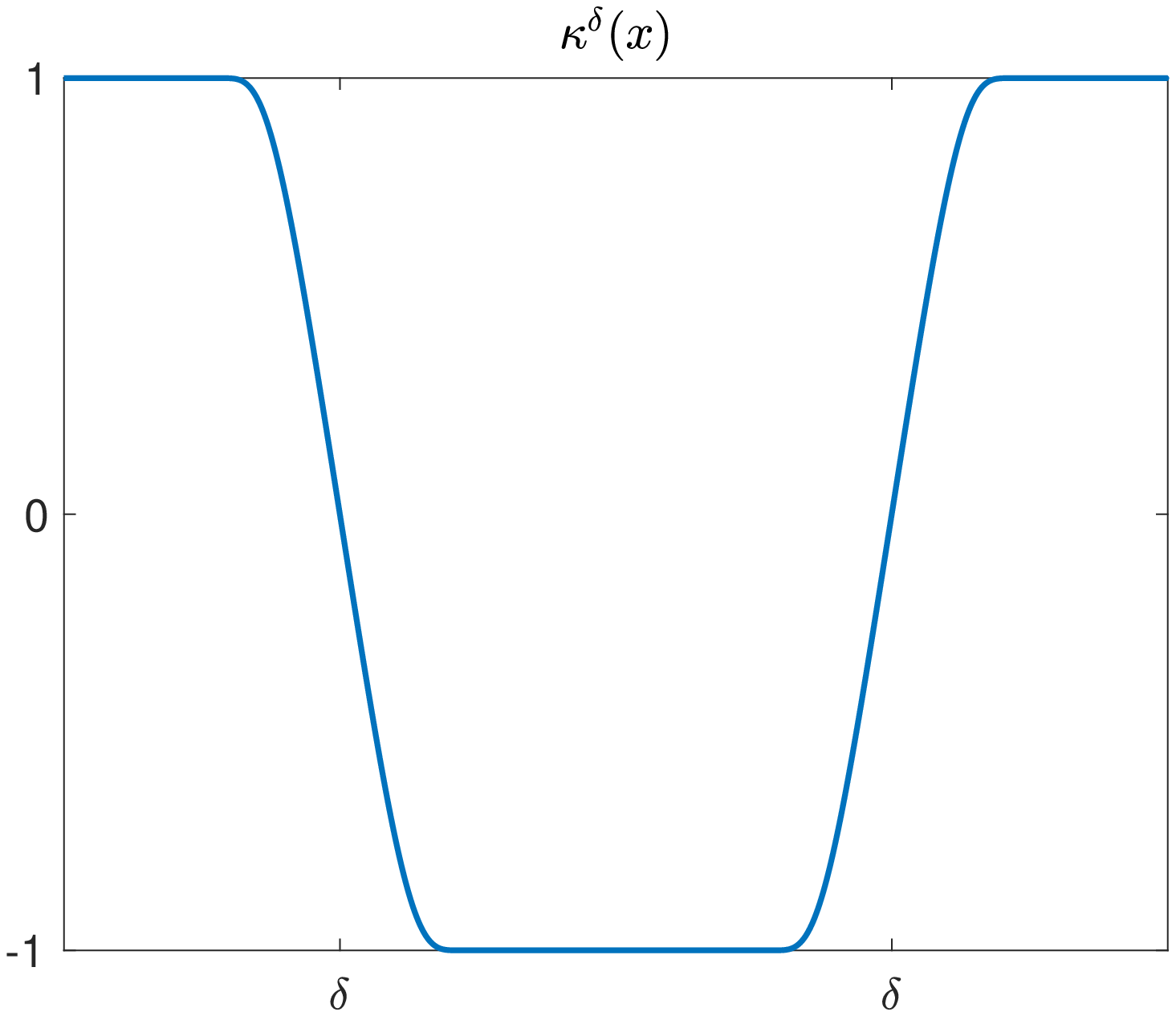} 
\caption{}
\label{fig:kappa_L}
\end{subfigure}
\caption{(a) Plot of the one domain wall function $\kappa(x)$ defined by \eqref{eq:explicit_dw_func}, and (b) plot of the two domain wall function $\kappa^\d(x)$ defined by \eqref{eq:kappa_L} with $\kappa(x)$ given by \eqref{eq:explicit_dw_func} and $\d = 2$.}
\end{figure}

\subsection{One domain wall Dirac operator $\mathcal{D}_{\kappa}$ and spectral gap assumption} Let $\mathcal{D}_{\kappa}$ denote the ``one domain wall'' Dirac operator:
\begin{equation} \label{eq:single_dom_wall_operator}
	\mathcal{D}_\kappa := i \sigma_3 \de_x  + \kappa(x) \sigma_1,
\end{equation}
where $\kappa(x)$ is a domain wall function. By assumption, $\mathcal{D}_\kappa$ satisfies the hypotheses of Theorem \ref{th:on_dirac}, and since $\lim_{x \rightarrow \infty} \kappa(x) = \kappa_\infty > 0$ and $\lim_{x \rightarrow - \infty} \kappa(x) = - \kappa_\infty$, the one domain wall Dirac operator has a unique zero mode given by \eqref{eq:zero_mode} (see Figure \ref{fig:one_domain_wall_zero_mode}). 
\begin{figure}
\includegraphics[scale=.6]{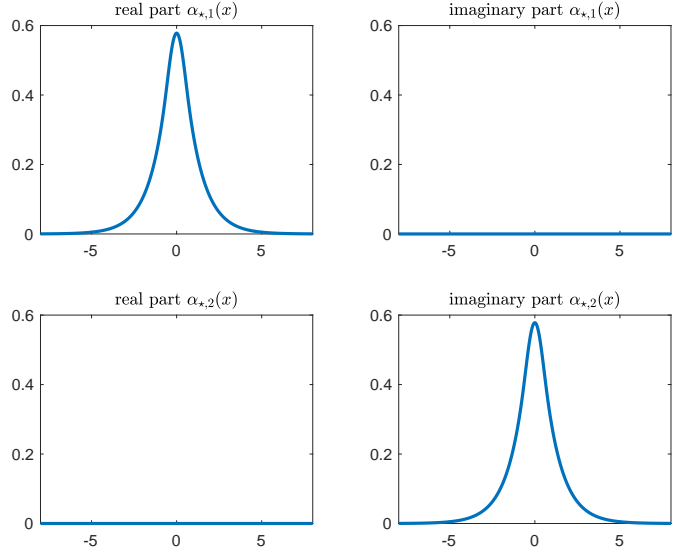}
\caption{The zero mode $\alpha_\star(x) = (\alpha_{\star,1}(x),\alpha_{\star,2}(x))^{\top}$ of the operator \eqref{eq:single_dom_wall_operator} with $\kappa$ as in Figure \ref{fig:kappa}, given analytically by \eqref{eq:zero_mode}. }
\label{fig:one_domain_wall_zero_mode}
\end{figure}
We will work with the following ``spectral gap'' assumption on the operator $\mathcal{D}_{\kappa}$. 
This assumption may be significantly weakened; see Remark \ref{rem:remark_on_weaker_assumptions}. 

\begin{assumption}[Spectral gap] \label{as:assumption_on_D}
Let $E = 0$ be the unique eigenvalue of $\mathcal{D}_\kappa$ in the spectral gap $(-\kappa_\infty,\kappa_\infty)$. That is, for all $f\in \mathcal{H}^1$ such that $\ip{\alpha_\star}{f}_\mathcal{H} = 0$, then: 
\begin{equation}
	\| \mathcal{D}_\kappa f \|_\mathcal{H} \geq \kappa_\infty \| f \|_\mathcal{H}.
\label{lb-Dk}\end{equation}
\end{assumption}
Assumption \ref{as:assumption_on_D} holds, for example, when $\kappa_\infty = 1$ and $\kappa(x) = \tanh(x)$ (which satisfies the relaxed domain wall function conditions \eqref{eq:weaker_1}-\eqref{eq:weaker_2}); we give a self-contained proof of this in Appendix \ref{sec:tanh_unique_evalue}.
\begin{remark} \label{rem:remark_on_weaker_assumptions} Our methods
  extend to the case where the operator $\mathcal{D}_{\kappa}$ has a
  finite number of point eigenvalues in the interval
  $(-\kappa_\infty,\kappa_\infty)$. Any such spectrum must be bounded
  a fixed distance $r > 0$ away from zero since the $0$-eigenvalue
  is simple. In this case, the bound \eqref{lb-Dk} is replaced
  by
  $\| \mathcal{D}_\kappa f \|_\mathcal{H} \geq r \| f \|_\mathcal{H}$
  and in the proof all estimates hold with $r$ in place of
  $\kappa_\infty$. For instance, our main result Theorem \ref{th:main_theorem} would then guarantee (for sufficiently large $\delta$) the existence of precisely two eigenvalues of $\mathcal{D}_{\kappa^\d}$ within any compact interval $[-K,K] \subset (-r,r)$. 
\end{remark}
An immediate consequence of Assumption \ref{as:assumption_on_D} is  the following: 
\begin{proposition} \label{prop:prop_on_D} Let Assumption
  \ref{as:assumption_on_D} hold. Choose any $E$ satisfying
  $|E| < \kappa_\infty$.  Then,
  \begin{enumerate}
  \item  if
  $f\in \mathcal{H}^1$ and
  $\ip{\alpha_\star}{f}_\mathcal{H} = 0$, then 
\begin{equation}
	\| ( \mathcal{D}_\kappa - E) f \|_\mathcal{H} \geq (\kappa_\infty - |E|) \| f \|_\mathcal{H}. 
\end{equation}
\item  Introduce the orthogonal projection $P^\perp: L^2(\mathbb{R}^2)\to {\rm span} \{\alpha_\star\}^\perp$ and let $\phi \in {\rm span}\{\alpha_\star\}^\perp$. 
Then, the equation
\begin{equation}
 P^\perp\ (\mathcal{D}_{\kappa} - E) \psi\ =\ \phi\ 
 \label{ihom0-E}\end{equation}
has a unique solution 
$\psi\in \mathcal{H}^1 \cap {\rm span}\{\alpha_\star\}^\perp$. We denote this solution by 
  $\psi\ =\ P^\perp\ (\mathcal{D}_{\kappa} - E)^{-1}\ P^\perp\ \phi$. 
\medskip

\item The operator $P^\perp\ (\mathcal{D}_{\kappa} - E)^{-1}\ P^\perp$ satisfies the bound: 
\begin{equation} \label{eq:resolvent_bound}
	\| P^\perp\ (\mathcal{D}_{\kappa} - E)^{-1}\ P^\perp \|_{\mathcal{H} \rightarrow \mathcal{H}} \leq \frac{1}{\kappa_\infty - |E|}. 
\end{equation}
\end{enumerate}
\end{proposition}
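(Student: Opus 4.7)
\medskip

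\noindent\textbf{Proof proposal.} The plan is to reduce all three parts to the observation that $\text{span}\{\alpha_\star\}^\perp$ is invariant under $\mathcal{D}_\kappa$ and that the restriction has essential spectrum in $(-\infty,-\kappa_\infty]\cup[\kappa_\infty,\infty)$ with no eigenvalues in $(-\kappa_\infty,\kappa_\infty)$ by Assumption \ref{as:assumption_on_D}. Invariance is immediate: for $f\in\mathcal{H}^1\cap\text{span}\{\alpha_\star\}^\perp$, self-adjointness gives $\ip{\alpha_\star}{\mathcal{D}_\kappa f}_\mathcal{H}=\ip{\mathcal{D}_\kappa\alpha_\star}{f}_\mathcal{H}=0$, so $\mathcal{D}_\kappa f\in\text{span}\{\alpha_\star\}^\perp$; the same holds for $\mathcal{D}_\kappa-E$.

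\medskip

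\noindent For part (1), I would invoke the spectral theorem for the self-adjoint operator $\mathcal{D}_\kappa$. Decomposing any $f\in\mathcal{H}^1$ with $\ip{\alpha_\star}{f}_\mathcal{H}=0$ against the spectral resolution, the spectral measure of $f$ is supported in $(-\infty,-\kappa_\infty]\cup[\kappa_\infty,\infty)$ (the zero eigenvalue contributes nothing since $f\perp\alpha_\star$). Therefore
\begin{equation}
\|(\mathcal{D}_\kappa-E)f\|_\mathcal{H}^2=\int_{|\lambda|\ge\kappa_\infty}|\lambda-E|^2\,d\|E_\lambda f\|^2\ge(\kappa_\infty-|E|)^2\|f\|_\mathcal{H}^2,
\end{equation}
which gives the claimed estimate. (Alternatively one may bypass the spectral theorem and combine Assumption \ref{as:assumption_on_D} directly with $\ip{\mathcal{D}_\kappa f}{f}_\mathcal{H}$-type manipulations, but the spectral-theoretic route is cleanest.)

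\medskip

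\noindent For part (2), given $\phi\in\text{span}\{\alpha_\star\}^\perp$, the invariance observation above lets me replace the equation $P^\perp(\mathcal{D}_\kappa-E)\psi=\phi$ with $\psi\in\mathcal{H}^1\cap\text{span}\{\alpha_\star\}^\perp$ by the equivalent equation $(\mathcal{D}_\kappa-E)\psi=\phi$ on the invariant subspace. Uniqueness is immediate from part (1): any two solutions differ by an element of $\ker(\mathcal{D}_\kappa-E)|_{\alpha_\star^\perp}$, which part (1) forces to be trivial when $|E|<\kappa_\infty$. For existence I would apply the functional calculus: the restriction of $\mathcal{D}_\kappa-E$ to $\text{span}\{\alpha_\star\}^\perp$ is self-adjoint (as the restriction of a self-adjoint operator to a reducing subspace) with spectrum contained in $\{\mu:|\mu+E|\ge\kappa_\infty\}$, which excludes $0$; hence it admits a bounded inverse defined on the whole of $\text{span}\{\alpha_\star\}^\perp$, producing $\psi\in\mathcal{H}^1\cap\text{span}\{\alpha_\star\}^\perp$.

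\medskip

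\noindent For part (3), I would combine (1) and (2). Writing $\psi=P^\perp(\mathcal{D}_\kappa-E)^{-1}P^\perp\phi$ for arbitrary $\phi\in\mathcal{H}$, one has $\psi\in\text{span}\{\alpha_\star\}^\perp$ and $(\mathcal{D}_\kappa-E)\psi=P^\perp\phi$. Applying part (1) and using $\|P^\perp\phi\|_\mathcal{H}\le\|\phi\|_\mathcal{H}$ yields
\begin{equation}
(\kappa_\infty-|E|)\|\psi\|_\mathcal{H}\le\|(\mathcal{D}_\kappa-E)\psi\|_\mathcal{H}=\|P^\perp\phi\|_\mathcal{H}\le\|\phi\|_\mathcal{H},
\end{equation}
which is the claimed resolvent bound. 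The only conceptual subtlety — really the only place care is needed — is the invariance verification that makes the off-diagonal block of $\mathcal{D}_\kappa-E$ between $\text{span}\{\alpha_\star\}$ and its orthogonal complement vanish; everything else reduces to standard functional calculus.
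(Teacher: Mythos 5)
Your proposal is correct. The paper itself offers no proof of this proposition, labelling it an ``immediate consequence'' of Assumption \ref{as:assumption_on_D}, and the intended route for part (1) is almost certainly the one-line triangle inequality
\begin{equation*}
\| (\mathcal{D}_\kappa - E) f \|_\mathcal{H} \;\geq\; \| \mathcal{D}_\kappa f \|_\mathcal{H} - |E|\,\| f \|_\mathcal{H} \;\geq\; (\kappa_\infty - |E|)\, \| f \|_\mathcal{H},
\end{equation*}
which uses only the displayed bound \eqref{lb-Dk}; your spectral-theorem version is also valid but needs the extra input that the spectrum of $\mathcal{D}_\kappa$ in the gap is exactly $\{0\}$ with eigenspace spanned by $\alpha_\star$ (Assumption \ref{as:assumption_on_D} together with parts (3), (5), (7) of Theorem \ref{th:on_dirac}), so it is slightly heavier machinery for the same estimate. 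For part (2) your reducing-subspace argument --- $\alpha_\star$ is an exact eigenfunction, so $\mathrm{span}\{\alpha_\star\}^\perp$ reduces $\mathcal{D}_\kappa$, the restriction is self-adjoint and bounded below by $\kappa_\infty$ in norm, hence $(-\kappa_\infty,\kappa_\infty)$ lies in its resolvent set --- is clean and entirely adequate here. It is worth noting the contrast with the paper's proof of the analogous Corollary \ref{F0}: there the subspace $\Hrl$ is spanned by only \emph{approximate} zero modes of $\mathcal{D}_{\kappa^\d}$, so it is not invariant, the functional-calculus shortcut is unavailable, and the authors must instead run a density/weak-solution argument with elliptic regularity. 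Your approach exploits exact invariance, which is precisely what is available for the single-domain-wall operator but not for the multi-wall one; keep that distinction in mind, since the same argument will not transfer to Corollary \ref{F0}. Part (3) is handled exactly as one would expect and is fine.
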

\subsection{Zero modes of ``shifted'' one domain wall operators}
Since $\mathcal{D}_{\kappa^\delta}$ involves well-separated spatial shifts of the domain wall function, $\kappa$, we introduce  ``shifted'' one domain wall Dirac operators:
\begin{equation} \label{eq:shifted_operators}
	\mathcal{D}^\+_{\kappa} := i \sigma_3 \de_{x} + \kappa(x - \d) \sigma_1, \quad \mathcal{D}^\-_{\kappa} := i \sigma_3 \de_{x} - \kappa(x + \d) \sigma_1.
\end{equation}
The superscripts $R$ and $L$ refer, respectively,  to ``right'' and ``left''. Note that restricted for functions supported on $\{x>0\}$,
  $\mathcal{D}_{\kappa^\delta}= \mathcal{D}^\+_{\kappa}$ and on $\{x<0\}$, $\mathcal{D}_{\kappa^\delta}= \mathcal{D}^\-_{\kappa}$.

The shifted operators $\mathcal{D}^\+_{\kappa}$ and $ \mathcal{D}^\-_{\kappa}$ have zero modes, which are expressible in terms of the zero mode of $\mathcal{D}_{\kappa}$:
\begin{proposition} \label{prop:shifted}
Define 
\begin{equation} \label{eq:def_alpha_pm}
	\alpha_{\star}^{\+}(x) := \alpha_\star(x - \d), \quad \alpha_{\star}^{\-}(x) := \overline{ \alpha_\star(x + \d) } \ .
\end{equation}
Then, 
\[\mathcal{D}^\+_{\kappa}\alpha_\star^\+=0\qquad {\rm and}\qquad  \mathcal{D}^\-_{\kappa}\alpha_\star^\- =0\ .\]
\end{proposition}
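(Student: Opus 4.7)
The plan is to verify both identities by direct computation, exploiting two elementary symmetries of the one-dimensional Dirac operator: translation invariance for the right-shifted case, and complex-conjugation symmetry combined with translation for the left-shifted case. Since $\mathcal{D}_\kappa \alpha_\star = 0$ has already been established in Theorem \ref{th:on_dirac}, nothing has to be proved about the existence or decay of a zero mode; only a change of variables and a conjugation bookkeeping step are needed.

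For the right-shifted operator, I would set $y = x - \d$ and use the chain rule to write $\de_x \alpha_\star(x - \d) = (\de_y \alpha_\star)(y)$, so that
\begin{equation*}
\mathcal{D}^\+_\kappa \alpha_\star^\+(x) = i \sigma_3 (\de_y \alpha_\star)(y) + \kappa(y) \sigma_1 \alpha_\star(y) = (\mathcal{D}_\kappa \alpha_\star)(y) = 0.
\end{equation*}
This is simply the translation invariance of the zero-mode equation under a simultaneous shift of the mass function and the eigenstate by the same amount $\d$.

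For the left-shifted operator, the extra ingredient is to observe that taking the complex conjugate of $\mathcal{D}_\kappa \alpha_\star = 0$ flips the sign of the mass term in the resulting equation. Since $\sigma_1$, $\sigma_3$ are real matrices and $\kappa$ is a real function, while $i \de_x$ changes sign under conjugation, one obtains
\begin{equation*}
(i \sigma_3 \de_x - \kappa(x) \sigma_1)\, \overline{\alpha_\star}(x) = 0.
\end{equation*}
Now substituting $y = x + \d$ and applying the chain rule exactly as in the right-shifted case gives
\begin{equation*}
\mathcal{D}^\-_\kappa \alpha_\star^\-(x) = \bigl(i \sigma_3 \de_y - \kappa(y) \sigma_1\bigr) \overline{\alpha_\star}(y) \big|_{y = x + \d} = 0,
\end{equation*}
as required.

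There is no real technical obstacle here; the only point requiring any care is the sign bookkeeping under complex conjugation, which explains why the definition of $\alpha_\star^\-$ in \eqref{eq:def_alpha_pm} uses $\overline{\alpha_\star(x+\d)}$ rather than $\alpha_\star(x+\d)$. As an alternative route that bypasses the conjugation argument, one could also verify the second identity by direct substitution into the explicit formula \eqref{eq:zero_mode_flip} of Theorem \ref{th:on_dirac}, since $\mathcal{D}^\-_\kappa$ has effective mass function $\tilde{\kappa}(x) = -\kappa(x + \d)$ with $\tilde{\kappa}(+\infty) = -\kappa_\infty < 0$; the two approaches agree up to a harmless overall multiplicative constant, consistent with uniqueness of the zero mode modulo scalar multiples.
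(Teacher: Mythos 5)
Your proof is correct and follows essentially the same route as the paper's: translation invariance for $\alpha_\star^\+$, and complex conjugation (using that $\kappa$, $\sigma_1$, $\sigma_3$ are real while $i\de_x$ flips sign) combined with a shift for $\alpha_\star^\-$; the only cosmetic difference is that you conjugate before translating whereas the paper translates first. The sign bookkeeping in your conjugation step checks out.
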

Note that the operators $\mathcal{D}^\+_{\kappa}$ and
$ \mathcal{D}^\-_{\kappa}$ and the states $\alpha_{\star}^\+(x)$ and
$\alpha_{\star}^\-(x)$ depend on the separation parameter $\delta$. We
 suppress this dependence to avoid cluttering notation.
\begin{proof}
\begin{align*}
	&\left( i \sigma_3 \de_x + \kappa(x) \sigma_1 \right) \alpha_\star(x) = 0 & \nonumber	\\
	\implies &\left( i \sigma_3 \de_{x} + \kappa(x - \d) \sigma_1 \right) \alpha_\star(x - \d) = 0. & \text{(changing variables, $\de_x = \de_{x - \d}$)}  \nonumber	\\
\end{align*}
\begin{align*}
	\nonumber &\left( i \sigma_3 \de_x  + \kappa(x) \sigma_1 \right) \alpha_\star(x) = 0 &	\\
	\nonumber \implies &\left(  i \de_{x}\sigma_3 + \kappa(x + \d) \sigma_1 \right) \alpha_\star(x + \d) = 0 &\text{(changing variables, $\de_x = \de_{x + \d}$) } 	\\
	\nonumber \implies &\left( -  i \de_{x}\sigma_3 - \kappa(x + \d) \sigma_1 \right) \alpha_\star(x + \d) = 0 &\text{(multiply by $-1$)} 	\\
	\nonumber \implies &\left(  i \de_{x}\sigma_3 - \kappa(x + \d) \sigma_1 \right) \overline{ \alpha_\star(x + \d) } = 0. &\text{(complex conjugate, $\kappa$ real) } 
\end{align*}
\end{proof}
We remark at this point that the ``shifted'' one domain wall zero modes $\alpha_\star^\+$ and $\alpha_\star^\-$ are approximate zero modes of the two domain wall operator $\mathcal{D}_{\kappa^\delta}$ in the following sense:
\begin{proposition} \label{prop:app_zm}
For $\d > 1$ the ``shifted'' one domain wall zero modes $\alpha_\star^\+$ and $\alpha_\star^\-$ satisfy the estimates:
\begin{equation}
	\| \mathcal{D}_{\kappa^\d} \alpha_\star^\+ \|_\mathcal{H} \leq C e^{- 2 \kappa_\infty \d}, \quad \| \mathcal{D}_{\kappa^\d} \alpha_\star^\- \|_\mathcal{H} \leq C e^{- 2 \kappa_\infty \d}
\end{equation}
for constants $C > 0$ depending only on $\kappa$. Here $\mathcal{D}_{\kappa^\d}$ denotes the two domain wall operator \eqref{eq:kappa_L}.
\end{proposition}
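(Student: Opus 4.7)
My plan is to use the identity $\mathcal{D}_{\kappa^\d} - \mathcal{D}^{\+}_{\kappa} = (\kappa^\d(x) - \kappa(x-\d))\sigma_1$ together with Proposition \ref{prop:shifted}, which says that $\mathcal{D}^{\+}_{\kappa}\alpha_\star^{\+} = 0$. Subtracting gives the pointwise representation
\[
	\mathcal{D}_{\kappa^\d}\alpha_\star^{\+}(x)\ =\ \bigl(\kappa^\d(x) - \kappa(x - \d)\bigr)\sigma_1\alpha_\star^{\+}(x),
\]
so the proof reduces to estimating the $L^2$-norm of the right-hand side by combining (i) a sharp description of the support of the multiplier and (ii) the pointwise decay of $\alpha_\star$ supplied by Theorem \ref{th:on_dirac}.

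First I would locate the support of the multiplier. By the definition \eqref{eq:kappa_L}, $\kappa^\d(x) = \kappa(x-\d)$ on $\{x\ge0\}$, so the multiplier vanishes there; on $\{x\le0\}$ it equals $-\kappa(x+\d) - \kappa(x-\d)$. Invoking clause (3) of Definition \ref{s-dw} together with $\d>1$, we have $\kappa(x-\d) = -\kappa_\infty$ for every $x\le 0$, while $\kappa(x+\d) = \kappa_\infty$ as soon as $x \ge 1-\d$. Hence the multiplier is supported in the left half-line interval $(-\infty, 1-\d]$, and is uniformly bounded there by $2\kappa_\infty$.

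Next I would feed in the pointwise decay \eqref{eq:alpha_for_x_large}: on the support just identified, $x - \d \le 1 - 2\d < 0$, so $|\alpha_\star^{\+}(x)| = |\alpha_\star(x - \d)| \le C e^{-\kappa_\infty(\d - x)}$. A one-line integration then gives
\[
	\|\mathcal{D}_{\kappa^\d}\alpha_\star^{\+}\|_{\mathcal{H}}^{2}\ \le\ 4\kappa_\infty^{2} C^{2} e^{-2\kappa_\infty \d}\int_{-\infty}^{1-\d} e^{2\kappa_\infty x}\,\dee x\ \le\ C' e^{-4\kappa_\infty \d},
\]
whence $\|\mathcal{D}_{\kappa^\d}\alpha_\star^{\+}\|_{\mathcal{H}} \le C'' e^{-2\kappa_\infty \d}$. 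The bound for $\alpha_\star^{\-}$ is entirely symmetric: one uses $\mathcal{D}_{\kappa^\d} - \mathcal{D}^{\-}_{\kappa} = (\kappa^\d(x) + \kappa(x+\d))\sigma_1$, which vanishes on $\{x\le 0\}$ and, on $\{x\ge 0\}$, is supported in $[\d-1,\infty)$ where $|\alpha_\star^{\-}(x)| = |\alpha_\star(x+\d)| \le C e^{-\kappa_\infty(x+\d)}$; the same integral produces the same bound.

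I do not anticipate a serious obstacle; the proposition is essentially a bookkeeping calculation. The only point requiring attention is to verify that the ``mismatch region'' between $\kappa^\d$ and the relevant shifted single--domain-wall profile lies precisely on the \emph{far} side of the zero mode from its center, so that the supremum of the multiplier occurs at a distance of order $2\d$ from the center of $\alpha_\star^{\+}$ (respectively $\alpha_\star^{\-}$). This is exactly what produces the advertised factor $e^{-2\kappa_\infty \d}$, reflecting the overlap of the two exponentially localized zero modes across the gap between the domain walls.
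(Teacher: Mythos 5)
Your argument is correct and is essentially the paper's own proof: both split $\mathbb{R}$ at $x = 1-\d$ into the region where $\mathcal{D}_{\kappa^\d}$ coincides with the shifted operator $\mathcal{D}^{\+}_{\kappa}$ (so the integrand vanishes by Proposition \ref{prop:shifted}) and the complementary tail, where the exponential decay \eqref{eq:alpha_for_x_large} yields the $e^{-4\kappa_\infty\d}$ bound on the squared norm. Your explicit rewriting of the difference as the multiplication operator $(\kappa^\d(x)-\kappa(x-\d))\sigma_1$ is a mild streamlining (it avoids invoking the derivative bound in \eqref{eq:alpha_for_x_large}); the only nitpick is that the uniform bound $2\kappa_\infty$ on the multiplier should be $\kappa_\infty+\sup_{[-1,1]}|\kappa|$ for non-monotonic $\kappa$, which changes nothing.
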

For the proof of Proposition \ref{prop:app_zm}, see Appendix \ref{sec:proofs_0}.

We are now in a position to state our theorem for the two domain wall Dirac operator:
\begin{theorem} \label{th:main_theorem}
Let the Spectral Gap Assumption \ref{as:assumption_on_D} hold, and pick any compact interval $[-K,K] \subset (-\kappa_\infty,\kappa_\infty)$. Then there is a constant $\d_0(K)>0$ such that for all $\delta > \delta_0(K)$ the operator $\mathcal{D}_{\kappa^\d}$ defined in \eqref{eq:def_dirac} has precisely two simple eigenvalues $E^\delta_\pm$ in the interval $[-K,K]$. These eigenvalues have expansions: 
\begin{equation} \label{eq:E_asymptotics}
	E^\delta_\pm = \pm 2 \gamma^2 e^{- 2 \inty{0}{\d}{\kappa(y)}{y}} + O(e^{- 4 \kappa_\infty \d}). 
\end{equation}
Their associated (normalized) eigenfunctions, which we denote $\alpha^\d_\pm(x)$, may be written as approximate linear combinations of $\alpha^{\+,\d}_\star(x), \alpha^{\-,\d}_\star(x)$:
\begin{equation} \label{eq:two_domain_wall_modes}
\begin{split}
	&\alpha^\d_+(x) = \frac{\gamma}{\sqrt{2}} \left( \alpha^{\+,\d}_\star(x) + i \alpha^{\-,\d}_\star(x) \right) + O_{\mathcal{H}}(e^{- 2 \kappa_\infty \d})	\\
	&\alpha^\d_-(x) = \frac{\gamma}{\sqrt{2}} \left( \alpha^{\+,\d}_\star(x) - i \alpha^{\-,\d}_\star(x) \right) + O_{\mathcal{H}}(e^{- 2 \kappa_\infty \d})	
\end{split}		
\end{equation}
where the functions $\alpha^{\+,\d}_\star(x), \alpha^{\-,\d}_\star(x)$ are the shifted zero-mode functions defined by \eqref{eq:def_alpha_pm} and the real constant $\gamma$ is as in \eqref{eq:zero_mode} . 
\end{theorem}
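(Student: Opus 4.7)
The plan is to perform a Lyapunov--Schmidt reduction based on the approximation subspace
\[
X := \mathrm{span}\{\alpha_\star^\+, \alpha_\star^\-\} \subset \mathcal{H}
\]
spanned by the shifted zero modes of Proposition \ref{prop:shifted}. The two vectors $\alpha_\star^\+, \alpha_\star^\-$ are each $\mathcal{H}$-normalized, and using the specific spinor structure $(1, \pm i)^\top$ of the formulas in \eqref{eq:zero_mode} a direct computation shows that their overlap $\langle \alpha_\star^\+, \alpha_\star^\- \rangle_\mathcal{H}$ vanishes identically; hence $\{\alpha_\star^\+, \alpha_\star^\-\}$ is an orthonormal basis of $X$. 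Let $P$ denote the orthogonal projection onto $X$ and $P^\perp := I - P$, and write any candidate eigenfunction as $\alpha = a_+ \alpha_\star^\+ + a_- \alpha_\star^\- + \eta$ with $\eta \in \mathrm{range}(P^\perp)$.

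Next, project the eigenvalue equation $(\mathcal{D}_{\kappa^\d} - E) \alpha = 0$ onto $X$ and $X^\perp$. The $X^\perp$ projection reads
\begin{equation*}
    P^\perp (\mathcal{D}_{\kappa^\d} - E) P^\perp \eta \;=\; -\, P^\perp (\mathcal{D}_{\kappa^\d} - E)\bigl( a_+ \alpha_\star^\+ + a_- \alpha_\star^\- \bigr).
\end{equation*}
The crucial ingredient, to be supplied by Proposition \ref{prop:prop_on_DL}, is that $P^\perp (\mathcal{D}_{\kappa^\d} - E) P^\perp$ is boundedly invertible on $\mathrm{range}(P^\perp)$ uniformly for $E \in [-K,K] \subset (-\kappa_\infty, \kappa_\infty)$, provided $\delta$ is sufficiently large. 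Granted this, together with the approximate-zero-mode bound of Proposition \ref{prop:app_zm}, one solves $\eta = \eta(E, \delta, \vec a)$ linearly in $\vec a := (a_+, a_-)^\top$ with $\|\eta\|_\mathcal{H} \le C e^{-2 \kappa_\infty \d} |\vec a|$.

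Substituting $\eta(E,\delta,\vec a)$ into the projection onto $X$ reduces the problem to the $2 \times 2$ nonlinear eigenvalue problem $M_{2\times 2}(E,\delta)\,\vec a = 0$, analytic in $E$. The principal matrix elements arise from $\langle \alpha_\star^\-, \mathcal{D}_{\kappa^\d} \alpha_\star^\+ \rangle_\mathcal{H}$ and its Hermitian conjugate: by Proposition \ref{prop:shifted}, $\mathcal{D}_{\kappa^\d} \alpha_\star^\+(x) = \bigl[\kappa^\d(x) - \kappa(x - \d)\bigr] \sigma_1 \alpha_\star^\+(x)$ is supported in $\{x \le -\delta + 1\}$, where $\alpha_\star^\-$ is $O(1)$ while $\alpha_\star^\+$ is exponentially small of order $e^{-2 \kappa_\infty \d}$. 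An explicit evaluation of the resulting integral (the content of the forthcoming Proposition \ref{lem:bounds_on_terms_in_M}) produces the expansion
\begin{equation*}
    M_{2\times 2}(E,\delta) \;=\; -E\, I_{2 \times 2} \;+\; T(\delta) \;+\; R(E,\delta),
\end{equation*}
where $T(\delta)$ is anti-Hermitian, off-diagonal, with entries of magnitude $2 \gamma^2 e^{-2 \int_0^\d \kappa(y) \, dy} (1 + o(1))$, and $R(E,\delta) = O(e^{-4 \kappa_\infty \d}) + O(|E| e^{-2 \kappa_\infty \d})$ on $[-K,K]$. The characteristic equation $\det M_{2 \times 2}(E,\delta) = 0$ then has exactly two roots, which yield the asymptotic expansion \eqref{eq:E_asymptotics}; substituting the corresponding null vectors of $M_{2 \times 2}$ back through the Schur-complement formula for $\eta$ produces \eqref{eq:two_domain_wall_modes}.

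The main technical obstacle is the $\delta$-uniform invertibility of $P^\perp (\mathcal{D}_{\kappa^\d} - E) P^\perp$ asserted in Proposition \ref{prop:prop_on_DL}: Assumption \ref{as:assumption_on_D} provides a spectral gap only for the single-domain-wall operator $\mathcal{D}_\kappa$ away from $\mathrm{span}\{\alpha_\star\}$, whereas we need a gap for $\mathcal{D}_{\kappa^\d}$ away from the two-dimensional subspace $X$. The strategy is an IMS-type partition of unity: choose smooth cutoffs $\chi_\+, \chi_\-$ with $\chi_\+^2 + \chi_\-^2 = 1$ separating the two domain-wall cores (with $\chi_\+$ supported in $\{x > -\delta/2\}$, say), apply the localization formula to $\|(\mathcal{D}_{\kappa^\d} - E)\eta\|_\mathcal{H}^2$, and estimate each localized piece $\chi_\pm \eta$ by replacing $\mathcal{D}_{\kappa^\d}$ with the shifted operator $\mathcal{D}^{\pm}_{\kappa}$ of \eqref{eq:shifted_operators}, for which Proposition \ref{prop:prop_on_D} applies. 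The commutators $[\mathcal{D}_{\kappa^\d}, \chi_\pm]$ are supported near $x = 0$ where the bulk eigenfunctions $\alpha_\star^\pm$ decay like $e^{-\kappa_\infty \d}$, and the potential-mismatch $(\kappa^\d - \kappa(\cdot \mp \d))\sigma_1$ on the support of $\chi_\pm$ is identically zero on that half-line; these bookkeeping facts let all error terms be absorbed for $\delta$ large.
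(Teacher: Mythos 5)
Your proposal follows essentially the same route as the paper: orthogonal decomposition onto $\mathrm{span}\{\alpha_\star^\+,\alpha_\star^\-\}$ (with the exact spinor-orthogonality observation of Lemma \ref{orthog-sym}), Lyapunov--Schmidt/Schur reduction to a $2\times 2$ nonlinear eigenvalue problem using the uniform invertibility of $\Prl(\mathcal{D}_{\kappa^\d}-E)\Prl$ proved by an IMS partition of unity, followed by the expansion of $M(\d,E)$ and a Rouch\'e argument for the roots of $\det M$. The only cosmetic differences are your two-piece (rather than three-piece) partition of unity and slightly more conservative remainder bounds, neither of which changes the argument.
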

Note that in the statement of the theorem we make explicit the dependence of the functions $\alpha^\+_\star$, $\alpha_\star^\-$, and $\alpha_\pm$ on $\delta$. 
A numerical computation of the modes \eqref{eq:two_domain_wall_modes} is displayed in Figure \ref{fig:two_domain_walls_modes}. 
\begin{figure}
\includegraphics[scale=.6]{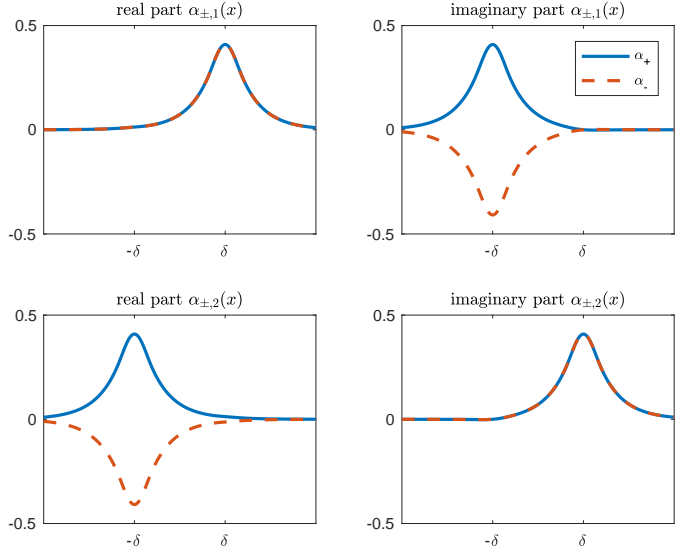}
\caption{Numerical computation of the two near-zero modes \eqref{eq:two_domain_wall_modes} of the operator $\mathcal{D}_{\kappa^\d}$ \eqref{eq:def_dirac} for $\kappa^\delta$ as in Figure \ref{fig:kappa_L}. Note $\Re\alpha_{+,1}=\Re\alpha_{-,1}$,
 and $\Im\alpha_{+,2}=\Im\alpha_{-,2}$ }
\label{fig:two_domain_walls_modes}
\end{figure}


We outline the strategy of the proof of Theorem \ref{th:main_theorem} in Section \ref{sec:strategy_of_proof}, postponing the proofs of key propositions to Appendix \ref{sec:proofs}.  
\begin{remark}
If we relax the assumption that $\kappa$ is odd (part (2) of Assumption \ref{s-dw}), the leading order behavior of the near-zero eigenvalues \eqref{eq:E_asymptotics} must be modified to the form: 
\begin{equation} \label{eq:E_without_oddness}
	E^\delta_\pm = \pm 2 \gamma^2 e^{- \inty{0}{\d}{\kappa}{y}} e^{\inty{-\d}{0}{ \kappa}{y}} + O(e^{- 4 \kappa_\infty \d})	
\end{equation}
where the real constant $\gamma$ is as in \eqref{eq:zero_mode}.
\end{remark}

For the case of $n$ domain walls, the generalization of Theorem \ref{th:main_theorem} which follows from a similar analysis is as follows:
\begin{theorem} \label{th:n_theorem}
Let the Spectral Gap Assumption \ref{as:assumption_on_D} hold, and pick any compact interval $[-K,K] \subset (- \kappa_\infty,\kappa_\infty)$. Let $\mathcal{D}_{\kappa^\d}$ denote an $n$ domain wall Dirac operator obtained by ``glue-ing'' $n$ domain walls together, each a distance $2 \d$ apart as in the definition of the ``two domain wall function'' \eqref{eq:kappa_L}. Then, there is a constant $\delta_0(K)$ such that for all $\delta > \delta_0(K)$, the operator $\mathcal{D}_{\kappa^\d}$ has precisely $n$ simple eigenvalues $E^{\d}_j$, $j \in \{1,...,n\}$ in the interval $[-K,K]$. These eigenvalues have expansions $E^\d_j = E^{\d}_{j,0} + O(e^{- 4 \kappa_\infty \d})$, where $E^{\d}_{j,0}$ denotes the $j$th eigenvalue of the $n \times n$ tri-diagonal matrix:
\begin{equation}
   M_0(\delta) := \begin{pmatrix} 0 & - 2 i \gamma^2 e^{- 2 \inty{0}{\d}{ \kappa(y) }{y} } & 0 & \hdots \\
   2 i \gamma^2 e^{- 2 \inty{0}{\d}{ \kappa(y) }{y} } & 0 & - 2 i \gamma^2 e^{- 2 \inty{0}{\d}{ \kappa(y) }{y}} & \hdots    \\
   0 & 2 i \gamma^2 e^{- 2 \inty{0}{\d}{ \kappa(y) }{y} } & 0 & \hdots  \\
   \vdots  & \vdots & \vdots & \ddots 
   \end{pmatrix}.
\end{equation}
The associated (normalized) eigenfunctions $\alpha^{\d}_j$ of these eigenvalues have expansions $\alpha^{\d}_j = \alpha^{\d}_{j,0} + O_{\mathcal{H}}(e^{- 2 \kappa_\infty \d})$ whose leading order terms $\alpha^{\d}_{j,0}$ are linear combinations of shifted copies of the zero-mode function $\alpha_\star$ \eqref{eq:zero_mode}. The precise linear combinations are determined by the associated eigenvectors of $M_0(\delta)$ corresponding to the eigenvalues $E_{j,0}^\d$. 
\end{theorem}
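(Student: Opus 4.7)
The plan is to apply the Lyapunov-Schmidt/Schur complement reduction of Section \ref{sec:strategy_of_proof}, extended from $n=2,3$ to general $n$. Label the $n$ domain wall transitions in $\kappa^\delta$ by centers $x_1 < \cdots < x_n$ with $x_{j+1}-x_j = 2\delta$, and, as in \eqref{eq:shifted_operators}, associate to each a shifted one-domain-wall operator $\mathcal{D}^{(j)}_\kappa$ whose sign is chosen according to whether the $j$th transition rises from $-\kappa_\infty$ to $+\kappa_\infty$ or falls from $+\kappa_\infty$ to $-\kappa_\infty$. By the argument of Proposition \ref{prop:shifted}, each $\mathcal{D}^{(j)}_\kappa$ has an exponentially localized zero mode $\alpha^{(j)}_\star$, built by translating $\alpha_\star$ or its complex conjugate via \eqref{eq:zero_mode}--\eqref{eq:zero_mode_flip}. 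The argument of Proposition \ref{prop:app_zm} then yields $\|\mathcal{D}_{\kappa^\delta}\alpha^{(j)}_\star\|_\mathcal{H} = O(e^{-2\kappa_\infty\delta})$, so each $\alpha^{(j)}_\star$ is an approximate zero mode of the full operator.

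Let $V := \mathrm{span}\{\alpha^{(1)}_\star,\dots,\alpha^{(n)}_\star\}$; since $\ip{\alpha^{(i)}_\star}{\alpha^{(j)}_\star}_\mathcal{H} = O(e^{-2\kappa_\infty\delta})$ for $i\neq j$ by exponential localization, the Gram matrix is $I + O(e^{-2\kappa_\infty\delta})$ and $V$ is $n$-dimensional for $\delta$ large. Let $P$, $P^\perp$ denote the orthogonal projections onto $V$ and $V^\perp$. The eigenvalue equation $(\mathcal{D}_{\kappa^\delta}-E)\alpha = 0$ decomposes into
\begin{align}
P(\mathcal{D}_{\kappa^\delta}-E)P\alpha + P(\mathcal{D}_{\kappa^\delta}-E)P^\perp\alpha &= 0,\\
P^\perp(\mathcal{D}_{\kappa^\delta}-E)P\alpha + P^\perp(\mathcal{D}_{\kappa^\delta}-E)P^\perp\alpha &= 0.
\end{align}
The central technical step, which I expect to be the main obstacle, is to invert $P^\perp(\mathcal{D}_{\kappa^\delta}-E)P^\perp$ uniformly for $E \in [-K,K]$ and $\delta$ large, generalizing Proposition \ref{prop:prop_on_DL}. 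I would handle this by an IMS-type partition of unity: introduce smooth cutoffs $\{\chi_j\}_{j=0}^n$ with $\sum_j \chi_j^2 = 1$, where $\chi_j$ for $j\geq 1$ is localized in a neighborhood of $x_j$ of width comparable to $\delta$ and $\chi_0$ is supported where $\kappa^\delta$ is constant. On each $\chi_j$-patch ($j\geq 1$), $\mathcal{D}_{\kappa^\delta}$ coincides with $\mathcal{D}^{(j)}_\kappa$, so Assumption \ref{as:assumption_on_D}, applied to the orthogonal complement of $\alpha^{(j)}_\star$ (which approximates $V$ on that patch), produces a gap of size $\kappa_\infty - |E|$; on $\chi_0$-patches, the same gap follows from Fourier analysis of the constant-mass Dirac operator. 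Commutator errors from the cutoffs scale as $1/\delta$ and are absorbed into the main term.

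With the resolvent bound in hand, solve the second equation for $P^\perp\alpha$ in terms of $P\alpha$ and substitute into the first to obtain an $n\times n$ nonlinear matrix equation $M_{n\times n}(E,\delta)c = 0$ on the coordinate vector $c \in \mathbb{C}^n$ of $P\alpha$ in the basis $\{\alpha^{(j)}_\star\}$. As in Proposition \ref{lem:bounds_on_terms_in_M}, the Schur complement correction contributes only $O(e^{-4\kappa_\infty\delta})$. The leading-order entries are $\ip{\alpha^{(i)}_\star}{(\mathcal{D}_{\kappa^\delta}-E)\alpha^{(j)}_\star}_\mathcal{H}$; exponential localization forces entries with $|i-j|\geq 2$ into $O(e^{-4\kappa_\infty\delta})$ (the relevant overlap region lies at distance $\geq 2\delta$ from both centers); diagonal entries reduce to $-E + O(e^{-4\kappa_\infty\delta})$ by the $\sigma_2$ anti-commutation of Theorem \ref{th:on_dirac}(2) combined with $\mathcal{D}^{(j)}_\kappa\alpha^{(j)}_\star = 0$; and nearest-neighbor entries produce the alternating $\pm 2i\gamma^2 e^{-2\int_0^\delta \kappa(y)\,dy}$ entries of $M_0(\delta)$ by a direct computation that uses the explicit form of $\alpha_\star$ in the region between two adjacent walls, exactly mirroring the $n=2$ calculation. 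Thus $M_{n\times n}(E,\delta) = -EI + M_0(\delta) + O(e^{-4\kappa_\infty\delta})$.

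The matrix $M_0(\delta)$ is tridiagonal and Hermitian (its purely imaginary off-diagonal entries are complex conjugates across the diagonal), so it has $n$ distinct real eigenvalues of order $\mu := 2\gamma^2 e^{-2\int_0^\delta\kappa(y)\,dy}$. A Rouch\'e / implicit function argument applied to $\det M_{n\times n}(E,\delta)=0$ on small disks of radius $\mu/2$ around each eigenvalue of $M_0(\delta)$ then yields exactly $n$ simple roots $E^\delta_j = E^\delta_{j,0} + O(e^{-4\kappa_\infty\delta})$ in $[-K,K]$, and a pigeonhole/counting argument shows that these exhaust the eigenvalues of $\mathcal{D}_{\kappa^\delta}$ in $[-K,K]$. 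The coefficient vector $c_j$ of $P\alpha^\delta_j$ is the eigenvector of $M_0(\delta)$ for $E^\delta_{j,0}$ up to error $O(e^{-2\kappa_\infty\delta})$, and the Schur formula bounds $\|P^\perp\alpha^\delta_j\|_\mathcal{H} = O(e^{-2\kappa_\infty\delta})$, giving the stated expansion $\alpha^\delta_j = \alpha^\delta_{j,0} + O_\mathcal{H}(e^{-2\kappa_\infty\delta})$.
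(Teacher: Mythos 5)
Your proposal is correct and follows essentially the same route as the paper, which itself only writes out the details for $n=2$ and $n=3$ (Sections \ref{sec:strategy_of_proof} and \ref{sec:3_dom_wall_case}) and asserts the general case is identical: approximate zero modes at each transition, an IMS partition-of-unity energy estimate generalizing Proposition \ref{prop:prop_on_DL}, the Schur/Lyapunov--Schmidt reduction to $M_{n\times n}(E,\delta) = -EI + M_0(\delta) + O(e^{-4\kappa_\infty\delta})$, and Rouch\'e's theorem. The only micro-variation is your use of the $\sigma_2$ anti-commutation symmetry to kill the diagonal entries $\ip{\alpha^{(j)}_\star}{\mathcal{D}_{\kappa^\delta}\alpha^{(j)}_\star}_\mathcal{H}$, where the paper computes them directly from the orthogonality of the spinors $(1,i)^\top$ and $(i,1)^\top$; both work.
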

We give details of the proof of Theorem \ref{th:n_theorem} for the case $n = 3$ in Sections \ref{sec:3_dom_wall_case} and \ref{sec:proof_3dw_lemma}, but omit the proof for general $n$ since the argument is essentially identical.

Whenever $n$ is odd, because $\lim_{x \rightarrow \infty}\kappa^\d = \kappa_\infty$ and $\lim_{x \rightarrow - \infty} \kappa^\d = - \kappa^\infty$, $\mathcal{D}_{\kappa^\d}$ has a unique (up to a complex constant of norm $1$) \emph{exact} normalized zero mode given by
\begin{equation} \label{eq:n_dw_zero_mode}
	\alpha_\star^\d(x) := \gamma^\d \begin{pmatrix} 1 \\ i \end{pmatrix} e^{ - \inty{0}{x}{ \kappa^\d(y) }{y} },\qquad 
	\gamma^\d := \frac{1}{\sqrt{2} \| e^{- \inty{0}{x}{ \kappa^\d(y) }{y} } \|_{L^2} }. 
\end{equation}
It follows that one of the eigenvalues $E^\d_{j}$ within the interval $[-K,K]$ whose existence is guaranteed by Theorem \ref{th:n_theorem} must correspond to this mode. This occurs, for example, in the case of three domain walls. We discuss this in Section \ref{sec:3_dom_wall_case}. 

\begin{remark} 
\label{rem:remark_on_nonequal_spacing} The results we have discussed so far treat only the case where all of the domain walls are equally spaced. Our analysis does not rely on this, only on the minimal distance between neighbouring domain walls being large. However, in the case that they are not equally
  spaced, we expect the result to be rather complicated to state for
  the following reason. When all domain walls are equally spaced from
  each other by a distance $2 \d$, the small parameter
  $e^{- 2 \kappa_\infty \d}$ emerges naturally in expansions of the
  eigenvalues and eigenfunctions. If domain walls spaced from each
  other by, for example, distances $2 \d$ and $2 \d'$ where
  $\d \neq \d'$ are allowed, expansions of the eigenvalues and
  eigenfunctions will instead depend on powers of both
  $e^{- 2 \kappa_\infty \d}$ and $e^{- 2 \kappa_\infty \d'}$. Hence,
  we expect the general result in this case to be rather complicated
  to state, although no new technical difficulty arises. 
\end{remark}


\section{Proof of Theorem \ref{th:main_theorem} on near-zero energy bound states of the two domain wall operator (strategy)} \label{sec:strategy_of_proof}
We now describe the strategy of the proof of Theorem \ref{th:main_theorem}. We start by seeking a solution of the eigenvalue problem (\ref{eq:eigenvalue_problem}) as a linear combination of the shifted single domain wall states $\alpha^\+_\star, \alpha^\-_\star$ plus a corrector function $\eta$ orthogonal to $\alpha^\+_\star$ and $ \alpha^\-_\star$ :
\begin{equation} \label{eq:ansatz}
	\alpha(x) = b^\+ \alpha_\star^\+(x) + b^\- \alpha_\star^\-(x) + \eta(x),\quad \ip{\alpha^I_\star}{\eta}=0,\ \ I=\+,\-\ .
\end{equation}
The constants $b^\+,  b^\-\in\mathbb{C}$, together with $\eta$ and $E$ are to be determined. 
 As earlier, we suppress the $\d-$ dependence of terms in \eqref{eq:ansatz}.

Remarkably, the  decomposition in \eqref{eq:ansatz} is an orthogonal decomposition:
\begin{lemma}\label{orthog-sym} 
\begin{equation}
	\ip{\alpha_\star^I}{\alpha_\star^J}_\mathcal{H} =\ \delta_{IJ},\qquad I, J\in\{\+,\-\}.
	\label{pm-orth}
\end{equation}
\end{lemma}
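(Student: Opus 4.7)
The plan is to verify the identity \eqref{pm-orth} by direct computation, exploiting the specific spinor structure of the zero mode $\alpha_\star$ given in Theorem~\ref{th:on_dirac}. The diagonal entries $I=J$ are immediate: since $\alpha_\star$ is $\mathcal{H}$-normalized by construction \eqref{eq:zero_mode}, and since both spatial translation and pointwise complex conjugation preserve the $\mathcal{H}$-norm, we have $\|\alpha_\star^R\|_\mathcal{H} = \|\alpha_\star^L\|_\mathcal{H} = 1$ for every $\delta$.

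The content of the lemma, and the one piece that needs a calculation, is the vanishing of the cross term $\langle \alpha_\star^R,\alpha_\star^L\rangle_\mathcal{H}$. The surprising feature is that this orthogonality holds for all $\delta$, not just asymptotically, despite the fact that the scalar envelope of $\alpha_\star^R$ is concentrated near $x=\delta$ and that of $\alpha_\star^L$ is concentrated near $x=-\delta$ (so their scalar overlap is non-negligible in absolute value when $\delta$ is small and only becomes exponentially small for large $\delta$). The reason is that $\alpha_\star^R(x)$ and $\alpha_\star^L(x)$ carry \emph{distinct} constant spinors: by \eqref{eq:def_alpha_pm} together with \eqref{eq:zero_mode},
\begin{equation}
\alpha_\star^R(x)=\gamma\begin{pmatrix}1\\ i\end{pmatrix}e^{-\int_0^{x-\delta}\kappa(y)\,dy},
\qquad
\alpha_\star^L(x)=\gamma\begin{pmatrix}1\\ -i\end{pmatrix}e^{-\int_0^{x+\delta}\kappa(y)\,dy},
\end{equation}
since $\gamma$ and $\kappa$ are real. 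The pointwise $\mathbb{C}^2$ inner product of the two constant spinors is $\overline{1}\cdot 1+\overline{i}\cdot(-i)=1-1=0$, so the integrand of $\langle \alpha_\star^R,\alpha_\star^L\rangle_\mathcal{H}$ vanishes identically. Hence the cross term is zero.

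Combining the two cases yields \eqref{pm-orth}. There is no real obstacle here; the only thing to appreciate is that the orthogonality is geometric (a property of the spinor structure dictated by the opposite-sign asymptotics of $\kappa$ at $\pm\infty$ as used in \eqref{eq:zero_mode} versus \eqref{eq:zero_mode_flip}), not a large-$\delta$ decay effect. This rigidity is what makes the ansatz \eqref{eq:ansatz} an \emph{exact} orthogonal decomposition and justifies its use in the subsequent Lyapunov--Schmidt reduction.
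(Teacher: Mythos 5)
Your proof is correct and follows essentially the same route as the paper's: the diagonal terms follow from normalization being preserved under translation and conjugation, and the off-diagonal term vanishes pointwise because the constant spinors $(1,i)^{\top}$ and $(1,-i)^{\top}$ are orthogonal in $\mathbb{C}^2$, which is exactly the cancellation \eqref{eq:cancellation} in the paper. Your added observation that this is an exact, $\delta$-independent cancellation rather than a large-$\delta$ decay effect matches the paper's own Remark \ref{not-orth}.
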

\begin{proof}
The assertion for $I=J$ is immediate from the definitions of $\alpha_\star^\-, \alpha_\star^\+$; see \eqref{eq:def_alpha_pm}.
Now consider the case $I \ne J$. To see that 
$	\ip{\alpha_\star^\-}{\alpha_\star^\+}_\mathcal{H} = 0 $, note 
\begin{align}
	\nonumber &\ip{\alpha_\star^\-}{\alpha_\star^\+}_\mathcal{H} = 
	\inty{\field{R}}{}{ \ip{\alpha_\star^\-(x)}{\alpha_\star^\+(x)} }{x}&	\\
	\nonumber &= \inty{\field{R}}{}{ \ip{ \overline{ \alpha_\star(x + \d) } }{ \alpha_\star(x - \d)} }{x}	&\text{(by: (\ref{eq:def_alpha_pm}))}	\\
	&= \inty{\field{R}}{}{ \gamma^2 \ip{ \begin{pmatrix} 1 \\ - i \end{pmatrix} }{\begin{pmatrix} 1 \\ i \end{pmatrix}} e^{- \inty{0}{x + \d}{ \kappa(y) }{y}} e^{- \inty{0}{x - \d}{ \kappa(y) }{y}} }{x} = 0. &\text{(by: (\ref{eq:zero_mode}))}  \label{eq:cancellation} 
\end{align}
This completes the proof.
\end{proof}
\begin{remark}\label{not-orth}
Even without the exact cancellation \eqref{eq:cancellation}, $\ip{\alpha_\star^\-}{\alpha_\star^\+}_\mathcal{H}$ is already $O(e^{- 2 \kappa_\infty \d})$ for $\delta$ sufficiently large. Below, when we treat the 3-domain wall case, the analogous decomposition is no longer orthogonal and we must make use of exponential decay of the analogous inner products instead (see \eqref{eq:bound_1_prime}). We shall explain the required modifications in the proof. 
\end{remark}

Let $\Prl$ denote the orthogonal projection in $\mathcal{H}$ onto the subspace $\Hrl \equiv \{ \alpha^\+_{\star}, \alpha^\-_{\star}\}^\perp$. 
We recall that $\alpha^\+_{\star}$ and $ \alpha^\-_{\star}$ and hence $\mathcal{H}$, $\Hrl$ and $\Prl$ depend on $\delta$, but we suppress this dependence.

Since the decomposition $\mathcal{H}={\rm span}\{ \alpha^\+_\star \}\oplus{\rm span}\{\alpha^\-_\star \}\oplus\Hrl$ is a orthogonal one, we can obtain an equivalent formulation of the eigenvalue problem by substituting \eqref{eq:ansatz} into \eqref{eq:eigenvalue_problem} and orthogonally projecting onto each subspace. This gives a coupled system of three equations for $b^\+, b^\-$ and $\eta$ which depends on the eigenvalue parameter $E$ and domain wall separation parameter $\d$:
\begin{align}
	& \sum_{j \in \+,\-} b^j \ip{ \alpha_\star^i }{ (\mathcal{D}_{\kappa^\d} - E) \alpha_\star^j }_\mathcal{H} + \ip{ \alpha_\star^i }{ (\mathcal{D}_{\kappa^\d} - E) \eta }_\mathcal{H} = 0,\qquad \text{$i= \+,\-$, } \label{eq:parallel_equation}	\\
	&\sum_{j \in \+,\-} b^j \Prl(\mathcal{D}_{\kappa^\d} - E) \alpha_\star^j + \Prl (\mathcal{D}_{\kappa^\d} - E) \eta = 0. \label{eq:perp_equation}
\end{align}

The next step (Lyapunov-Schmidt reduction) is to solve \eqref{eq:perp_equation} for $\eta$ as a function of $b^\+$ and $b^\-$. The mapping $(b^\+,b^\-)\mapsto \eta[b^\+,b^\-;E,\d]$ is linear in $b^\+$ and $b^\-$ and depends nonlinearly on $E$ and $\d$. Substitution of this mapping into equations \eqref{eq:parallel_equation} gives a system of two linear homogeneous equations: $M(E,\d)b=0$ for the unknowns  $b=(b^\+,b^\-)^{\top}$, depending nonlinearly on the energy $E$ and $\d$. We then proceed
to solve this equation for $E=E(\d)$, in a neighborhood of $E=0$, for all $\d$ sufficiently large. 

The reduction step is facilitated by the following:
\begin{proposition}[Basic Energy Estimate] \label{prop:prop_on_DL}
Fix $K > 0$ such that $K < \kappa_\infty$. Then there exists a $\d_0(K) \geq 2$, sufficiently large,  such that for all $\d > \d_0(K)$ the following holds: If $f\in \mathcal{H}^1$ satisfies $\ip{\alpha_\star^\+}{f}_\mathcal{H} = \ip{\alpha_\star^\-}{f}_\mathcal{H} = 0$, then
\begin{equation} \label{eq:statement_lemma_on_DL}
	\| \mathcal{D}_{\kappa^\d} f \|_\mathcal{H} \geq \frac{\kappa_\infty + K}{2} \| f \|_\mathcal{H}. 
\end{equation}
Moreover, for $|E| \leq K$:
\begin{equation} \label{eq:second_part_lem_on_DL}
	\| (\mathcal{D}_{\kappa^\d} - E) f \|_\mathcal{H} \geq \frac{\kappa_\infty - K}{2}  \| f \|_\mathcal{H}. 
\end{equation}
\end{proposition}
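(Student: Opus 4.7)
The plan is to use an IMS-type spatial localization that decomposes $\mathcal{D}_{\kappa^\d}$ into two pieces agreeing with the shifted single-wall operators $\mathcal{D}^\-_\kappa$ and $\mathcal{D}^\+_\kappa$. Each of these single-wall operators is unitarily equivalent to $\pm \mathcal{D}_\kappa$ (via spatial translation and, for $\mathcal{D}_\kappa^\-$, conjugation by $\sigma_1$), so Assumption \ref{as:assumption_on_D} transfers to them and gives $\|\mathcal{D}_\kappa^I g\|_\mathcal{H} \ge \kappa_\infty\|g\|_\mathcal{H}$ for every $g \in \mathcal{H}^1$ with $g \perp \alpha_\star^I$, $I \in \{\+,\-\}$.

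First I would fix a smooth quadratic partition of unity $\chi_\-^2 + \chi_\+^2 \equiv 1$ with $\chi_\- \equiv 1$ for $x \le -\delta/2$, $\chi_\- \equiv 0$ for $x \ge \delta/2$, and the transition spread across $[-\delta/2,\delta/2]$ so that $\|\chi_I'\|_\infty = O(\delta^{-1})$. Because $\delta \ge 2$, on the supports of $\chi_\-$ and $\chi_\+$ the mass $\kappa^\d$ coincides with $-\kappa(x+\delta)$ and $\kappa(x-\delta)$ respectively, so that $\mathcal{D}_{\kappa^\d}(\chi_I f) = \mathcal{D}_\kappa^I(\chi_I f)$. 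Writing $\mathcal{D}_{\kappa^\d}^2 = -\partial_x^2 + (\kappa^\d)^2 - \sigma_2(\kappa^\d)'$ and applying the scalar IMS identity $-\partial_x^2 = \sum_I \chi_I(-\partial_x^2)\chi_I - \sum_I (\chi_I')^2$ yields
\begin{equation*}
  \|\mathcal{D}_{\kappa^\d} f\|_\mathcal{H}^2 = \|\mathcal{D}_\kappa^\-(\chi_\- f)\|_\mathcal{H}^2 + \|\mathcal{D}_\kappa^\+(\chi_\+ f)\|_\mathcal{H}^2 - \|\chi_\-' f\|_\mathcal{H}^2 - \|\chi_\+' f\|_\mathcal{H}^2.
\end{equation*}

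Next, for each $I$ I would orthogonally decompose $\chi_I f = c_I \alpha_\star^I + g_I$ with $c_I := \ip{\alpha_\star^I}{\chi_I f}_\mathcal{H}$ and $g_I \perp \alpha_\star^I$; since $\mathcal{D}_\kappa^I \alpha_\star^I = 0$, the single-wall gap estimate gives $\|\mathcal{D}_\kappa^I(\chi_I f)\|_\mathcal{H}^2 \ge \kappa_\infty^2(\|\chi_I f\|_\mathcal{H}^2 - |c_I|^2)$. Summing over $I$ and using $\chi_\-^2 + \chi_\+^2 = 1$ produces
\begin{equation*}
  \|\mathcal{D}_{\kappa^\d} f\|_\mathcal{H}^2 \ge \kappa_\infty^2\|f\|_\mathcal{H}^2 - \kappa_\infty^2(|c_\-|^2 + |c_\+|^2) - \|\chi_\-' f\|_\mathcal{H}^2 - \|\chi_\+' f\|_\mathcal{H}^2.
\end{equation*}
The remaining task is to show all three error pieces are $o(1)$ as $\delta \to \infty$. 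The derivative terms are immediately $O(\delta^{-2})\|f\|_\mathcal{H}^2$ by construction of the cutoff. For the overlap coefficients, the orthogonality hypothesis $\ip{\alpha_\star^I}{f}_\mathcal{H} = 0$ lets me rewrite $c_I = \ip{\alpha_\star^I}{(\chi_I - 1)f}_\mathcal{H}$; since $1 - \chi_I$ vanishes on the length-$\delta/2$ neighborhood of $x = \mp \delta$ where $\alpha_\star^I$ is concentrated, the pointwise decay \eqref{eq:alpha_for_x_large} gives $|c_I| \le \|(1-\chi_I)\alpha_\star^I\|_\mathcal{H}\|f\|_\mathcal{H} = O(e^{-\kappa_\infty \delta/2})\|f\|_\mathcal{H}$. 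Combining, $\|\mathcal{D}_{\kappa^\d}f\|_\mathcal{H}^2 \ge (\kappa_\infty^2 - o(1))\|f\|_\mathcal{H}^2$, and choosing $\delta_0(K)$ so that the right-hand side exceeds $(\tfrac{\kappa_\infty+K}{2})^2\|f\|_\mathcal{H}^2$ proves \eqref{eq:statement_lemma_on_DL}. The bound \eqref{eq:second_part_lem_on_DL} then follows immediately from the triangle inequality $\|(\mathcal{D}_{\kappa^\d} - E)f\|_\mathcal{H} \ge \|\mathcal{D}_{\kappa^\d}f\|_\mathcal{H} - |E|\|f\|_\mathcal{H}$ together with $|E| \le K$.

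The main obstacle is balancing the two error sources when selecting the cutoff: a wide transition keeps $\|\chi_I'\|_\infty$ small but pushes its edge closer to the walls, while a narrow transition stays far from the walls (making $c_I$ small via exponential decay of $\alpha_\star^I$) but inflates $\|\chi_I'\|_\infty$. Centering the transition equidistant from both walls in $[-\delta/2,\delta/2]$, with width growing linearly in $\delta$, resolves the tension: $\|\chi_I'\|_\infty = O(\delta^{-1})$ while the support of $1 - \chi_I$ near wall $I$ remains at distance $\Theta(\delta)$.
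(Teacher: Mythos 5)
Your proof is correct, and it follows the same basic strategy as the paper's: localize via a quadratic partition of unity, invoke the single-wall spectral gap (Assumption \ref{as:assumption_on_D}, transported to the shifted operators) on each piece, and control the overlap coefficients $\ip{\alpha_\star^I}{\chi_I f}_{\mathcal{H}}$ by the same $O(e^{-\kappa_\infty\delta/2})$ bound the paper derives in \eqref{eq:f_orthogonal}. The implementation differs in two ways worth noting. First, the paper uses a \emph{three}-piece partition $\theta^L,\theta^0,\theta^R$ and treats the middle zone (where $\kappa^\delta\equiv-\kappa_\infty$) by a direct constant-coefficient estimate; you use only two pieces, which works because the global gap estimate for each shifted single-wall operator already controls the overlap region, so no separate middle-region argument is needed. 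Second, the paper commutes $\theta^j$ through $\mathcal{D}_{\kappa^\delta}$ and absorbs the commutator via Cauchy--Schwarz and Young's inequality, introducing an auxiliary parameter $\epsilon$ that must be optimized at the end; your IMS identity applied to $\mathcal{D}_{\kappa^\delta}^2=-\partial_x^2+(\kappa^\delta)^2-\sigma_2(\kappa^\delta)'$ (valid at the quadratic-form level for $f\in\mathcal{H}^1$, since the potential terms commute with the cutoffs) yields an exact formula whose only loss is the explicit $-\sum_I\|\chi_I'f\|_{\mathcal{H}}^2=O(\delta^{-2})\|f\|_{\mathcal{H}}^2$ term, which is cleaner. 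Your Pythagorean step $\|\mathcal{D}^I_\kappa(\chi_If)\|_{\mathcal{H}}^2\ge\kappa_\infty^2(\|\chi_If\|_{\mathcal{H}}^2-|c_I|^2)$ is also a slightly sharper use of the gap than the paper's Corollary \ref{cor:corollary_to_lemma_on_D}. The one assertion you should spell out is the unitary equivalence of $\mathcal{D}^L_\kappa$ with $-\mathcal{D}_\kappa$: after translation, $\sigma_1\bigl(i\sigma_3\partial_x-\kappa(x)\sigma_1\bigr)\sigma_1=-\mathcal{D}_\kappa$ and $\sigma_1\overline{\alpha_\star}=-i\alpha_\star$, so the gap estimate and the correct orthogonality condition do transfer; this is true but deserves a line of verification.
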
 

%
Proposition \ref{prop:prop_on_DL} is proved by expressing the energy associated with $\mathcal{D}_{\kappa^\d}$, via a partition of unity, as a superposition of localized energies localized near and away from domain wall ``cores''. The near-core energies are controlled using  Proposition \ref{prop:prop_on_D} and the energy away from the domain wall cores is essentially the energy associated with 
 the constant coefficient operators $\mathcal{D}_{\pm\kappa_\infty}$.
Variants of this technique are used, for example, in \cite{CyconFroeseKirschSimon} (Chapter 3) and \cite{FLW-CPAM:17}.
The detailed proof of Proposition \ref{prop:prop_on_DL} is given in  Appendix \ref{sec:proofs_1}. 

\medskip

\begin{corollary}\label{F0} 
\begin{enumerate}
\item Let $\phi \in \Hrl$ and $|E| \leq K$ where $0 < K < \kappa_\infty$. Then the equation
\begin{equation}
 \Prl\ (\mathcal{D}_{\kappa^\d} - E) \psi\ =\ \phi\ 
 \label{ihom-E}\end{equation}
has a unique solution 
$\psi\in \mathcal{H}^1 \cap \Hrl$. 
\medskip

\item The operator $\Prl\ (\mathcal{D}_{\kappa^\d} - E)^{-1}\ \Prl$ satisfies the bound:
\begin{equation} 
	\| \Prl\ (\mathcal{D}_{\kappa^\d} - E)^{-1}\ \Prl \|_{\mathcal{H} \rightarrow \mathcal{H}} \leq \frac{2}{\kappa_\infty - K}. 
\end{equation}
\end{enumerate}
\end{corollary}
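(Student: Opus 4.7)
The plan is to reduce both parts of the corollary to a single quantitative lower bound on the operator
\[
T := \Prl(\mathcal{D}_{\kappa^\d} - E)\big|_{\mathcal{H}^1 \cap \Hrl} : \mathcal{H}^1 \cap \Hrl \longrightarrow \Hrl,
\]
and then to deduce bijectivity of $T$ by a standard closed-range/duality argument using self-adjointness of $\mathcal{D}_{\kappa^\d}$. Once $T$ is known to be a bijection with $\|T^{-1}\|_{\Hrl \to \mathcal{H}} \leq 2/(\kappa_\infty - K)$, part (1) is immediate and part (2) follows from $\|\Prl\|_{\mathcal{H} \to \mathcal{H}} \leq 1$.

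For the lower bound on $T$, I note that $T\psi$ and $(I - \Prl)(\mathcal{D}_{\kappa^\d} - E)\psi$ are orthogonal in $\mathcal{H}$, so
\[
\|T\psi\|_\mathcal{H}^2 \;=\; \|(\mathcal{D}_{\kappa^\d} - E)\psi\|_\mathcal{H}^2 \;-\; \|(I - \Prl)(\mathcal{D}_{\kappa^\d} - E)\psi\|_\mathcal{H}^2.
\]
The first term on the right is bounded below by $\bigl(\tfrac{\kappa_\infty - K}{2}\bigr)^2 \|\psi\|_\mathcal{H}^2$ by \eqref{eq:second_part_lem_on_DL}. For the second term, $(I - \Prl)(\mathcal{D}_{\kappa^\d} - E)\psi$ lies in the two-dimensional orthonormal span (Lemma \ref{orthog-sym}) of $\alpha_\star^\+, \alpha_\star^\-$, with expansion coefficients $\langle \alpha_\star^i, (\mathcal{D}_{\kappa^\d} - E)\psi\rangle_\mathcal{H}$. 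Since $\mathcal{D}_{\kappa^\d}$ is self-adjoint and $\psi \perp \alpha_\star^i$, these coefficients equal $\langle \mathcal{D}_{\kappa^\d}\alpha_\star^i, \psi\rangle_\mathcal{H}$, which are $O(e^{-2\kappa_\infty\d})\|\psi\|_\mathcal{H}$ by Proposition \ref{prop:app_zm}. Enlarging the threshold $\delta_0(K)$ from Proposition \ref{prop:prop_on_DL} if necessary absorbs this exponentially small correction and delivers $\|T\psi\|_\mathcal{H} \geq \tfrac{\kappa_\infty - K}{2}\|\psi\|_\mathcal{H}$.

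The lower bound at once gives injectivity of $T$, closed range, and the required operator-norm bound on $T^{-1}$. For surjectivity onto $\Hrl$ I argue by duality: any $g \in \Hrl$ orthogonal to $\mathrm{ran}(T)$ satisfies $\langle g, (\mathcal{D}_{\kappa^\d} - E)\psi\rangle_\mathcal{H} = 0$ for every $\psi \in \mathcal{H}^1 \cap \Hrl$, so $(\mathcal{D}_{\kappa^\d} - E)g \in \mathrm{span}\{\alpha_\star^\+, \alpha_\star^\-\}$ in the distributional sense and, in particular, lies in $\mathcal{H}$. The elliptic regularity statement alluded to in Remark \ref{rem:weak_dw} (using that $\kappa^\d \in C^1$) then upgrades $g$ to $\mathcal{H}^1$, so that $g \in \mathcal{H}^1 \cap \Hrl$; repeating the coefficient estimate via Proposition \ref{prop:app_zm} gives $\|(\mathcal{D}_{\kappa^\d} - E)g\|_\mathcal{H} = O(e^{-2\kappa_\infty\d})\|g\|_\mathcal{H}$, and \eqref{eq:second_part_lem_on_DL} forces $g = 0$ for $\delta$ large.

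The main subtlety lies in the bookkeeping of the exponentially small projection error: Proposition \ref{prop:prop_on_DL} controls the un-projected operator $\mathcal{D}_{\kappa^\d} - E$, while the compressed operator $T$ differs from it by the rank-two piece $(I-\Prl)(\mathcal{D}_{\kappa^\d}-E)$, costing an $O(e^{-2\kappa_\infty\d})$ correction that must be absorbed into $\delta_0(K)$. Everything else is routine functional analysis once Proposition \ref{prop:prop_on_DL} is in hand.
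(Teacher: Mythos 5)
Your proposal is correct and follows essentially the same route as the paper: the key lower bound comes from Proposition \ref{prop:prop_on_DL}, the rank-two discrepancy between $\Prl(\mathcal{D}_{\kappa^\d}-E)$ and $\mathcal{D}_{\kappa^\d}-E$ on $\Hrl$ is controlled via Proposition \ref{prop:app_zm}, and surjectivity is obtained by the same duality-plus-elliptic-regularity argument. The only cosmetic differences are that you treat all $|E|\le K$ uniformly where the paper solves at $E=0$ and perturbs, and that your Pythagorean bookkeeping loses an $O(e^{-4\kappa_\infty\d})$ off the constant $(\kappa_\infty-K)/2$ --- harmless, since the proof of Proposition \ref{prop:prop_on_DL} has the requisite slack to absorb it for $\d$ large.
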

\begin{proof} Part (2) is a simple  consequence of part (1) and Proposition \ref{prop:prop_on_DL}. 
%
%
\ To prove part (1), we first show the solvability of \eqref{ihom-E} for $E=0$. Then solvability of \eqref{ihom-E} for $|E| \leq K$ follows by a perturbation argument. Suppose that there exists $\psi_0\in \Hrl$ which is not the image 
under $\Prl \mathcal{D}_{\kappa^\d}$ of a function in $\mathcal{H}^1 \cap \Hrl$. Then,  for all $\psi \in \mathcal{H}$ we have 
$\ip{\psi_0}{\Prl \mathcal{D}_{\kappa^\d} \Prl\psi}_{\mathcal{H}}=\ip{\psi_0}{\mathcal{D}_{\kappa^\d} \Prl\psi}_{\mathcal{H}}=0$. 

Therefore, $\psi_0$ is a weak solution of $\Prl \mathcal{D}_{\kappa^\d}\psi_0=0$ and by elliptic regularity 
$\psi_0 \in \mathcal{H}^2 \cap \Hrl$. Therefore, $\ip{\Prl \mathcal{D}_{\kappa^\d} \psi_0}{\psi}_{\mathcal{H}}=0$ for all
 $\psi\in \mathcal{H}^1 \cap \Hrl$. Taking $\psi = \Prl \mathcal{D}_{\kappa^\d} \psi_0$, we have that $\| \Prl \mathcal{D}_{\kappa^\d} \psi_0 \|^2_{\mathcal{H}} = 0$. To see that this implies that $\psi_0 = 0$, consider that by definition and self-adjointness of $ \mathcal{D}_{\kappa^\d}$:
\begin{equation} \label{eq:simple}
	\Prl \mathcal{D}_{\kappa^\d} f = \mathcal{D}_{\kappa^\d} f - \sum_{j = R,L} \ip{\mathcal{D}_{\kappa^\d}\alpha_\star^j}{ f}_\mathcal{H} \alpha_\star^j, \quad f \in \mathcal{H}^1.
\end{equation}
The ${\mathcal{H}}-$ norm of the second term on the right-hand side of \eqref{eq:simple} is $O(e^{- 2 \kappa_\infty \d}\ \|f\|_{\mathcal{H}})$ since $\| \mathcal{D}_{\kappa^\d} \alpha^j_{\star} \|_\mathcal{H} = O(e^{- 2 \kappa_\infty \d})$ (Proposition \ref{prop:app_zm}).
%
 Using this bound and then Proposition \ref{prop:prop_on_DL} we obtain, for $\delta$ sufficiently large:
 \begin{align*}
 \| \Prl \mathcal{D}_{\kappa^\d} \psi_0\ \|_\mathcal{H} &\ge \| \mathcal{D}_{\kappa^\d} \psi_0 \|_\mathcal{H} - C\ e^{- 2 \kappa_\infty \d}\ \|\psi_0\|_{\mathcal{H}}\\
 &\ge \left(\ \frac{\kappa_\infty + K}{2}-C\ e^{- 2 \kappa_\infty \d}\ \right)\|\psi_0\|_\mathcal{H}\ \ge\ \frac{\kappa_\infty - K}{2} \ \|\psi_0\|_\mathcal{H}.  \end{align*} 
%
%
Thus,  $\| \Prl \mathcal{D}_{\kappa^\d} \psi_0 \|_{\mathcal{H}} = 0$ implies that $\psi_0 = 0$ in $\mathcal{H}$.
\end{proof}
Assuming Proposition \ref{prop:prop_on_DL}, for $|E| \leq K$ we solve (\ref{eq:perp_equation}) in terms of $b^\+, b^\-$:
\begin{equation} \label{eq:eta}
\begin{split}
	\eta &= - \sum_{j \in \+,\-} b^j \Prl (\mathcal{D}_{\kappa^\d} - E)^{-1} \Prl (\mathcal{D}_{\kappa^\d} - E) \alpha_\star^j	\\
	&= - \sum_{j \in \+,\-} b^j \Prl (\mathcal{D}_{\kappa^\d} - E)^{-1} \Prl \mathcal{D}_{\kappa^\d} \alpha_\star^j.	
\end{split}
\end{equation}
Substituting \eqref{eq:eta} back into (\ref{eq:parallel_equation}), yields a \emph{closed} system for $b^\+, b^\-$, which depends on $E$ and $\delta$:
\begin{multline} \label{eq:system_for_bs}
  \text{$i \in \+,\-$, } \sum_{j \in \+,\- } \ip{ \alpha_\star^i }{ (\mathcal{D}_{\kappa^\d} - E) \alpha_\star^j }_\mathcal{H} b^j  \\
	- \sum_{j \in \+,\-} \ip{ \alpha_\star^i }{ (\mathcal{D}_{\kappa^\d} - E) \Prl (\mathcal{D}_{\kappa^\d} - E)^{-1} \Prl \mathcal{D}_{\kappa^\d} \alpha_\star^j }_\mathcal{H} b^j = 0.
\end{multline}
Equation (\ref{eq:system_for_bs}) can be written as the following homogeneous system:
\begin{equation}\label{eq:reduced_equation_for_bs}
	\sum_{j \in \+,\-} M^{i j} b^j = 0,\quad \text{$i \in \+,\-$, }  	
	\end{equation}
	where by self-adjointness of $\Prl (\mathcal{D}_{\kappa^\d} - E) \Prl $,  we have
\begin{equation}
M^{ij}(\d,E) := \ip{ \alpha_\star^i }{ \mathcal{D}_{\kappa^\d} \alpha_\star^j }_\mathcal{H} - E \ip{ \alpha_\star^i }{ \alpha_\star^j }_\mathcal{H} - \ip{ \Prl \mathcal{D}_{\kappa^\d} \alpha_\star^i }{ (\mathcal{D}_{\kappa^\d} - E)^{-1} \Prl \mathcal{D}_{\kappa^\d} \alpha_\star^j }_\mathcal{H}.
\end{equation}
In particular, we have:
\begin{corollary} \label{cor:corollary_on_DL}
Fix $E$ such that $|E| \leq K$. There exists $\delta_0(K) \geq 2$ sufficiently large such that for all $\delta>\delta_0(K)$,
\begin{align}
&\textrm{ $E$ is an eigenvalue of $\mathcal{D}_{\kappa^\d}$ if and only if $\text{\emph{det }} M^{i j}(\d,E) = 0$.}
\end{align} 
\end{corollary}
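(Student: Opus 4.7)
The plan is to show equivalence via a Lyapunov-Schmidt argument that has essentially been set up in the preceding discussion. The orthogonal decomposition $\mathcal{H} = \operatorname{span}\{\alpha_\star^\+\} \oplus \operatorname{span}\{\alpha_\star^\-\} \oplus \Hrl$, which is genuinely orthogonal thanks to Lemma \ref{orthog-sym}, means every $\alpha \in \mathcal{H}^1$ admits a \emph{unique} decomposition $\alpha = b^\+ \alpha_\star^\+ + b^\- \alpha_\star^\- + \eta$ with $\eta \in \mathcal{H}^1 \cap \Hrl$, and this decomposition is norm-preserving in the sense $\|\alpha\|_\mathcal{H}^2 = |b^\+|^2 + |b^\-|^2 + \|\eta\|_\mathcal{H}^2$. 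Consequently, $(\mathcal{D}_{\kappa^\d} - E)\alpha = 0$ in $\mathcal{H}$ holds if and only if both the projection onto $\operatorname{span}\{\alpha_\star^\+,\alpha_\star^\-\}$ and the projection onto $\Hrl$ of $(\mathcal{D}_{\kappa^\d} - E)\alpha$ vanish, which is precisely the system \eqref{eq:parallel_equation}--\eqref{eq:perp_equation}.

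First I would invoke Corollary \ref{F0} to solve \eqref{eq:perp_equation} for $\eta$. Since $|E| \leq K < \kappa_\infty$, choose $\d_0(K) \geq 2$ as in Proposition \ref{prop:prop_on_DL} so that $\Prl(\mathcal{D}_{\kappa^\d}-E)\Prl$ is invertible on $\Hrl$ with operator norm at most $2/(\kappa_\infty - K)$. Since $\eta \in \Hrl$, rewriting \eqref{eq:perp_equation} as $\Prl(\mathcal{D}_{\kappa^\d}-E)\Prl \eta = -\sum_j b^j \Prl \mathcal{D}_{\kappa^\d}\alpha_\star^j$ (using that $\Prl \alpha_\star^j = 0$ to drop the $E\alpha_\star^j$ contribution) produces the unique solution \eqref{eq:eta}, which is linear in $(b^\+,b^\-)$. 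Elliptic regularity (cf.\ the argument in the proof of Corollary \ref{F0}) ensures $\eta \in \mathcal{H}^1$.

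Substituting \eqref{eq:eta} back into \eqref{eq:parallel_equation} converts the eigenvalue problem into the closed $2 \times 2$ homogeneous linear system \eqref{eq:reduced_equation_for_bs} for $b = (b^\+, b^\-)^\top$, whose matrix is $M^{ij}(\d,E)$. I would then check equivalence in both directions. For the forward direction, an eigenfunction $\alpha \ne 0$ decomposes uniquely as above; the reduction yields $M b = 0$; if $b = 0$ then \eqref{eq:eta} forces $\eta = 0$ and hence $\alpha = 0$, a contradiction, so $b \ne 0$ and $\det M = 0$. For the converse, given $b \ne 0$ with $M b = 0$, define $\eta$ by \eqref{eq:eta} and set $\alpha = b^\+\alpha_\star^\+ + b^\-\alpha_\star^\- + \eta$; the orthogonality $\|\alpha\|_\mathcal{H}^2 \geq |b^\+|^2 + |b^\-|^2 > 0$ guarantees $\alpha \ne 0$, and by construction both projections of $(\mathcal{D}_{\kappa^\d}-E)\alpha$ vanish, so $\alpha$ is an eigenfunction at energy $E$.

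The argument is essentially formal once the invertibility of the reduced operator on $\Hrl$ is available; the real content is in Proposition \ref{prop:prop_on_DL} (which is proved via a spatial partition of unity localizing near and away from the domain wall cores) and the fact that the $\alpha_\star^\pm$ are actually orthogonal rather than merely approximately so. The only subtlety to watch in the reduction itself is ensuring that the regularity of $\eta$ is genuinely $\mathcal{H}^1$ so that $\alpha$ lies in the domain of $\mathcal{D}_{\kappa^\d}$; this is inherited from the mapping properties of $\Prl(\mathcal{D}_{\kappa^\d}-E)^{-1}\Prl$ established in Corollary \ref{F0}.
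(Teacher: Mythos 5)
Your proposal is correct and follows essentially the same Lyapunov--Schmidt route the paper uses: the orthogonal decomposition from Lemma \ref{orthog-sym}, inversion of $\Prl(\mathcal{D}_{\kappa^\d}-E)\Prl$ on $\Hrl$ via Corollary \ref{F0}, and back-substitution to the $2\times 2$ system \eqref{eq:reduced_equation_for_bs}. The explicit check of both directions of the equivalence (in particular that $b=0$ forces $\eta=0$ and hence $\alpha=0$) is left implicit in the paper but is exactly the right way to complete the argument.
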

Theorem \ref{th:main_theorem} will follow from a detailed analysis of each component of the matrix $M^{ij}(\d,E)$, assuming that $|E| \leq K$ so that \eqref{eq:second_part_lem_on_DL} and Corollary \ref{cor:corollary_on_DL} hold. Note that the resolvent operator $\Prl (\mathcal{D}_{\kappa^\d} - E)^{-1} \Prl$ is actually analytic in $E$ in the complex ball of radius $K$ 
centered at the origin. The following proposition summarizes the result of our analysis of the matrix $M(\d,E)$:
\begin{proposition} \label{lem:bounds_on_terms_in_M}
Assume that $|E| \leq K$ so that Proposition \ref{prop:prop_on_DL} (in particular \eqref{eq:second_part_lem_on_DL}) holds. Then each of the entries of $M(\d,E)$ varies analytically with $E$, and the matrix $M(\d,E)$ may be written as:
\begin{equation} \label{eq:M_ij_with_bounds}
	M(\d,E) = \begin{pmatrix} - E & - 2 i \gamma^2 e^{- 2 \inty{0}{\d}{\kappa(y)}{y}} \\ 2 i \gamma^2 e^{- 2 \inty{0}{\d}{\kappa(y)}{y}} & - E \end{pmatrix} + M_1(\d,E)
\end{equation}
where each of the entries of the matrix $M_1(\d,E)$ satisfies: 
\begin{equation} \label{eq:resolvent_bounded}
	| M^{ij}_1(\d,E) | \leq C e^{- 4 \kappa_\infty \d}
\end{equation}
for some constant $C > 0$ independent of $\d, E$. 
\end{proposition}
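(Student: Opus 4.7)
The plan is to dissect $M^{ij}(\d,E)$ into its three defining summands and to treat each in turn. The middle summand is trivial: Lemma \ref{orthog-sym} supplies $\ip{\alpha_\star^i}{\alpha_\star^j}_\mathcal{H} = \delta_{ij}$, so $-E\ip{\alpha_\star^i}{\alpha_\star^j}_\mathcal{H}$ contributes exactly the $-E$ on the diagonal of the leading matrix in \eqref{eq:M_ij_with_bounds}. The third summand $\ip{\Prl\mathcal{D}_{\kappa^\d}\alpha_\star^i}{(\mathcal{D}_{\kappa^\d}-E)^{-1}\Prl\mathcal{D}_{\kappa^\d}\alpha_\star^j}_\mathcal{H}$ will become the entire error matrix $M_1(\d,E)$, and its bound is immediate from two estimates already in hand: Cauchy--Schwarz combined with $\|\mathcal{D}_{\kappa^\d}\alpha_\star^i\|_\mathcal{H}\le Ce^{-2\kappa_\infty\d}$ (Proposition \ref{prop:app_zm}) and $\|\Prl(\mathcal{D}_{\kappa^\d}-E)^{-1}\Prl\|_{\mathcal{H}\to\mathcal{H}}\le 2/(\kappa_\infty-K)$ (Corollary \ref{F0}) yields the required $e^{-4\kappa_\infty\d}$ bound uniformly in $|E|\le K$. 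Analyticity of this summand in $E$ is standard: the resolvent $\Prl(\mathcal{D}_{\kappa^\d}-E)^{-1}\Prl$ is operator-valued analytic on a complex neighborhood of $[-K,K]$ by a Neumann-series argument anchored on the uniform bound in Corollary \ref{F0}.

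The main work is therefore to evaluate the remaining summand $\ip{\alpha_\star^i}{\mathcal{D}_{\kappa^\d}\alpha_\star^j}_\mathcal{H}$ in closed form and to show that it matches the off-diagonal entries of the leading matrix. I would begin by invoking Proposition \ref{prop:shifted} to rewrite $\mathcal{D}_{\kappa^\d}\alpha_\star^\+ = (\kappa^\d(x) - \kappa(x-\d))\sigma_1\alpha_\star^\+$; by the construction \eqref{eq:kappa_L}, the coefficient vanishes for $x\ge 0$, and the analogous identity for $\mathcal{D}_{\kappa^\d}\alpha_\star^\-$ restricts the corresponding integration to $x\ge 0$. For the diagonal entries $(\+,\+)$ and $(\-,\-)$, the pointwise integrand carries the algebraic scalar factor $(1,-i)\sigma_1(1,i)^T = 0$ inherited from the explicit zero modes \eqref{eq:zero_mode} and \eqref{eq:zero_mode_flip}, so both diagonal entries vanish identically.

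For the off-diagonal $(\+,\-)$ entry, direct substitution of the explicit formulas for $\alpha_\star^\+$ and $\alpha_\star^\-$ produces an integrand over $x\ge 0$ of the form
\begin{equation*}
-2i\gamma^2\bigl(\kappa(x-\d)+\kappa(x+\d)\bigr)\exp\bigl(-K(x-\d)-K(x+\d)\bigr),\qquad K(x) := \inty{0}{x}{\kappa(y)}{y},
\end{equation*}
which is exactly $2i\gamma^2\frac{d}{dx}\exp(-K(x-\d)-K(x+\d))$. The integral therefore telescopes: the value at $x=+\infty$ vanishes, and the evenness of $K$ (inherited from oddness of $\kappa$) reduces the $x=0$ boundary value to $\exp(-2K(\d))$, giving exactly $-2i\gamma^2 e^{-2\inty{0}{\d}{\kappa(y)}{y}}$. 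Self-adjointness of $\mathcal{D}_{\kappa^\d}$ together with reality of $\gamma$ then delivers the $(\-,\+)$ entry as the complex conjugate, matching the off-diagonal entries of the leading matrix in \eqref{eq:M_ij_with_bounds}.

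The only step that is not routine bookkeeping is this exact-derivative observation in the off-diagonal computation. Without it, a crude modulus estimate on the same integral would still yield a bound of the correct order $O(e^{-2\kappa_\infty\d})$, but not the clean closed-form expression $\mp 2i\gamma^2 e^{-2\inty{0}{\d}{\kappa}{y}}$ that drives the eigenvalue asymptotics \eqref{eq:E_asymptotics} in Theorem \ref{th:main_theorem}. Everything else --- the orthonormality reduction of the middle summand, the $\sigma_1$-cancellation that annihilates the diagonal entries, the Cauchy--Schwarz bound on the resolvent term, and analyticity in $E$ --- is a direct consequence of the lemmas, propositions, and corollary already established above.
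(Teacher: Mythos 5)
Your proposal is correct and follows essentially the same route as the paper's proof in Appendix \ref{sec:proofs_2}: exact evaluation of $\ip{\alpha_\star^i}{\mathcal{D}_{\kappa^\d}\alpha_\star^j}_\mathcal{H}$ via the spinor cancellation $\ip{(1,i)^\top}{\sigma_1(1,i)^\top}=0$ for the diagonal and a boundary-term computation for the off-diagonal, together with Cauchy--Schwarz, Proposition \ref{prop:app_zm}, and the resolvent bound of Corollary \ref{F0} for the $O(e^{-4\kappa_\infty\d})$ remainder. The only difference is cosmetic: you localize $\mathcal{D}_{\kappa^\d}\alpha_\star^\-$ to $x\ge 0$ and recognize the integrand as an exact derivative there, whereas the paper localizes the mirror-image inner product to $x\le 0$ and integrates by parts --- the same calculation.
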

 See Appendix \ref{sec:proofs_2} for the proof of Proposition~\ref{lem:bounds_on_terms_in_M}. We are now in a position to prove Theorem \ref{th:main_theorem}, which will follow from Corollary \ref{cor:corollary_on_DL} and a careful analysis of the roots of the determinant of the matrix $M(\d,E)$ appearing in \eqref{eq:M_ij_with_bounds}. Using \eqref{eq:resolvent_bounded}, we may write the determinant as: 
\begin{equation} \label{eq:det_M}
	\det M(\d,E) = ( E - 2 \gamma^2 e^{- 2 \inty{0}{ \d}{\kappa(y)}{y}} ) ( E + 2 \gamma^2 e^{- 2 \inty{0}{ \d}{\kappa(y)}{y}} ) + g(\d,E),
\end{equation}
where the function $g(\d,E)$ is analytic in $E$ and satisfies the bound: 
\begin{equation} \label{eq:g_est}
	| g(\d,E) | \leq C_1 \left( E e^{- 4 \kappa_\infty \d} + e^{- 6 \kappa_\infty \d} \right),
\end{equation}
where $C_1 > 0$ is a constant independent of $\d$ and $E$. Our strategy is as follows. We will first bound \eqref{eq:g_est} along contours in the complex plane centered at $\pm 2 \gamma^2 e^{- 2 \inty{0}{\d}{\kappa(y)}{y}}$ and then apply Rouch\'{e}'s theorem to conclude that $\det M(\d,E)$ has, for sufficiently large $\d$, precisely one root within each contour. The asymptotic expressions for the eigenvalues and associated eigenfunctions of $\mathcal{D}_{\kappa^\d}$ \eqref{eq:E_asymptotics}-\eqref{eq:two_domain_wall_modes} will then follow from an analysis of the asymptotic behavior of these roots. Finally, we will show uniqueness of the eigenvalues of $\mathcal{D}_{\kappa^\d}$ within the ball $|E| \leq K$ via a second application of Rouch\'{e}'s theorem. 

Note first that it is clear from \eqref{eq:properties_of_kappa} that there exist constants $C_2, C_3 > 0$, independent of $\d, E$ but dependent on $\kappa$, such that:
\begin{equation} \label{eq:basic_est}
	C_2 e^{- 2 \kappa_\infty \d} < 2 \gamma^2 e^{- 2 \inty{0}{ \d}{\kappa(y)}{y}} < C_3 e^{- 2 \kappa_\infty \d}
\end{equation}
for any $\d \geq 2$. We consider the contours:
\begin{equation} \label{eq:contours}
	\gamma_\pm := \left\{ \pm 2 \gamma^2 e^{- 2 \inty{0}{ \d}{\kappa(y)}{y}} + e^{i \theta} C_2 e^{- 2 \kappa_\infty \d} : \theta \in [0,2\pi] \right\}.
\end{equation}
It is clear from \eqref{eq:g_est} that $\left| g(\d,E) \right|$ may be uniformly bounded above for $E \in \gamma_\pm$ by a constant times $e^{- 6 \kappa_\infty \d}$. We now claim that the quadratic part of \eqref{eq:det_M} may be bounded below uniformly on the contours $\gamma_\pm$ \eqref{eq:contours} by a constant times $e^{- 4 \kappa_\infty \d}$. Without loss of generality since the contour $\gamma_-$ is similar, we show this for the contour $\gamma_+$ only. Evaluating the polynomial part of \eqref{eq:det_M} on the contour $\gamma_+$ gives: 
\begin{equation} \label{eq:polynomial_part}
\begin{split}
	&\left. ( E - 2 \gamma^2 e^{- 2 \inty{0}{\d}{\kappa(y)}{y}} )( E + 2 \gamma^2 e^{- 2 \inty{0}{\d}{\kappa(y)}{y}} ) \right|_{E \in \gamma_+} 	\\
	&= \left( e^{i \theta} C_2 e^{- 2 \kappa_\infty \d} \right)\left( 2 \left( 2 \gamma^2 e^{- 2 \inty{0}{\d}{\kappa(y)}{y}} \right) + e^{i \theta} C_2 e^{- 2 \kappa_\infty \d} \right) \quad \theta \in [0,2\pi].
\end{split}
\end{equation}
Applying the triangle inequality and then using \eqref{eq:basic_est} gives a lower bound on \eqref{eq:polynomial_part} which is uniform in $\theta$: 
\begin{equation}
\begin{split}
	&\geq C_2 e^{- 2 \kappa_\infty \d} \left( 2 \left( 2 \gamma^2 e^{- 2 \inty{0}{\d}{\kappa(y)}{y}} \right) - C_2 e^{- 2 \kappa_\infty \d} \right)	\\
	&> C_2 e^{- 2 \kappa_\infty \d} \left( 2 \gamma^2 e^{- 2 \inty{0}{\d}{\kappa(y)}{y}} \right) > C_2^2 e^{- 4 \kappa_\infty \d}.
\end{split}
\end{equation}
By Rouch\'{e}'s theorem, it now follows that for sufficiently large $\d > 2$, $\det M(\d,E)$ has the same number of roots within each of the contours $\gamma_\pm$ as its quadratic part, which has precisely one within each contour. 

Having established that $\det M(\d,E)$ has precisely one root within each contour, we now show that these roots satisfy the asymptotics \eqref{eq:E_asymptotics}. 
Again without loss of generality, we consider the root within the contour $\gamma_+$, denoting it by $E_+$. By definition \eqref{eq:det_M}, $E_+$ satisfies:
\begin{equation} \label{eq:def_E_+}
	( E_+ - 2 \gamma^2 e^{- 2 \inty{0}{ \d}{\kappa(y)}{y}} ) ( E_+ + 2 \gamma^2 e^{- 2 \inty{0}{ \d}{\kappa(y)}{y}} ) + g(\d,E_+) = 0.
\end{equation}
Since $E_+$ must lie in the interior of the contour $\gamma_+$ we have the bounds:
\begin{equation} \label{eq:cont_bds}
	2 \gamma^2 e^{- 2 \inty{0}{\d}{\kappa(y)}{y}} - C_2 e^{- 2 \kappa_\infty \d} \leq E_+ \leq 2 \gamma^2 e^{- 2 \inty{0}{\d}{\kappa(y)}{y}} + C_2 e^{- 2 \kappa_\infty \d}.
\end{equation}
Combining \eqref{eq:cont_bds} with \eqref{eq:basic_est} we then have that:
\begin{equation} \label{eq:E_+_est}
	0 \leq E_+ \leq (C_2 + C_3) e^{- 2 \kappa_\infty \d}.
\end{equation}
Since $E_+ \geq 0$ \eqref{eq:E_+_est}, we may divide by $E_+ + 2 \gamma^2 e^{- 2 \inty{0}{ \d}{\kappa(y)}{y}}$ in \eqref{eq:def_E_+} to obtain: 
\begin{equation} \label{eq:EE_++}
	E_+ - 2 \gamma^2 e^{- 2 \inty{0}{ \d}{\kappa(y)}{y}}  + \frac{ g(\d,E_+) }{ E_+ + 2 \gamma^2 e^{- 2 \inty{0}{ \d}{\kappa(y)}{y}} } = 0.
\end{equation}
Combining \eqref{eq:g_est} with \eqref{eq:E_+_est} and combining \eqref{eq:E_+_est} with \eqref{eq:basic_est} respectively yields the bounds:
\begin{equation} \label{eq:more}
\begin{split}
	&| g(\d,E_+) | \leq C_1^2 ( E_+ e^{- 4 \kappa_\infty \d} + e^{- 6 \kappa_\infty \d} ) = O(e^{- 6 \kappa_\infty \d})	\\
	&E_+ + 2 \gamma^2 e^{- 2 \inty{0}{ \d}{\kappa(y)}{y}} \geq C_2 e^{- 2 \kappa_\infty \d}.
\end{split}
\end{equation}
Using \eqref{eq:more} we can bound the third term appearing in \eqref{eq:EE_++} and derive the asymptotic expansions \eqref{eq:E_asymptotics}:
\begin{equation} \label{eq:E_as}
	E_\pm = \pm 2 \gamma^2 e^{- 2 \inty{0}{ \d}{ \kappa(y) }{ y } } + O(e^{- 4 \kappa_\infty \d}).
\end{equation}
Substituting these expressions into the matrix eigenvalue problem \eqref{eq:reduced_equation_for_bs} yields asymptotics for the associated eigenvectors:
\begin{equation} 
\begin{split}
	&\begin{pmatrix} b^\+ \\ b^\- \end{pmatrix}_+ = \frac{1}{\sqrt{2}} \begin{pmatrix} 1 \\ i \end{pmatrix} + O(e^{- 2 \kappa_\infty \d})	\\
	&\begin{pmatrix} b^\+ \\ b^\- \end{pmatrix}_- = \frac{1}{\sqrt{2}} \begin{pmatrix} 1 \\ - i \end{pmatrix} + O(e^{- 2 \kappa_\infty \d}),
\end{split}
\end{equation}
from which the asymptotics of the eigenfunctions of $\mathcal{D}_{\kappa^\d}$ \eqref{eq:two_domain_wall_modes} follow. 

To establish uniqueness of the eigenvalues $E_\pm(\d)$ \eqref{eq:E_as} within the interval $|E| \leq K$, we claim that the quadratic part of $\det M(\d,E)$ dominates the remainder $g(\d,E)$ uniformly over the contour $|E| = K$ and hence, again by Rouch\'{e}'s theorem, the determinant has precisely two roots within the contour. 
To see this, observe that for any $E$ the quadratic part of $\det M(\d,E)$ may be bounded below as follows: 
\begin{equation} \label{eq:uniqueness_1}
\begin{split}
	&( E - 2 \gamma^2 e^{- 2 \inty{0}{\d}{ \kappa(y) }{y}} )( E + 2 \gamma^2	e^{- 2 \inty{0}{\d}{ \kappa(y) }{y}} )	\\
	&= E^2 - \left( 2 \gamma^2 e^{- 2 \inty{0}{\d}{ \kappa(y) }{y}} \right)^2 \geq |E|^2 - C_2^2 e^{- 4 \kappa_\infty \d} 
\end{split}
\end{equation}
where the last inequality follows immediately from \eqref{eq:basic_est}. Using the bound \eqref{eq:g_est} on the remainder $g(\d,E)$ now implies that the quadratic part of the determinant dominates the remainder uniformly over the contour $|E| = {K}$ as long as:
\begin{equation} \label{eq:uniqueness_2}
	C_1 \left( {K} e^{- 4 \kappa_\infty \d} + e^{- 6 \kappa_\infty \d} \right) \leq {K}^2 - C_2^2 e^{- 4 \kappa_\infty \d} 
\end{equation}
which holds for any ${K} > 0$ for $\d$ sufficiently large.

\section{The case of three domain walls} \label{sec:3_dom_wall_case}
Theorem \ref{th:main_theorem}  may be generalized to the case where $\kappa^\d$ has the form of a ``3- domain wall'' mass function (see Figure \ref{fig:3_dw_kappa}):
\begin{equation} \label{eq:three_dw_kappa}
	\kappa^\d(x) = \begin{cases} \kappa(x + 2\d) & \text{for } - \infty \leq x \leq - \d \\ - \kappa(x) & \text{for } - \d \leq x \leq \d \\ \kappa(x - 2\d) & \text{for } \d \leq x \leq \infty \end{cases}.
\end{equation}
In this case, because $\lim_{x \rightarrow \infty} \kappa^\d = \kappa_\infty$ and $\lim_{x \rightarrow - \infty} \kappa^\d = - \kappa_\infty$, the operator $\mathcal{D}_{\kappa^\d}$ has a unique (up to a complex constant of norm 1) \emph{exact} normalized zero mode given by \eqref{eq:n_dw_zero_mode}. 
%
This mode is plotted in Figure \ref{fig:3_dw_zero_mode}. 
\begin{figure}
\includegraphics[scale=.35]{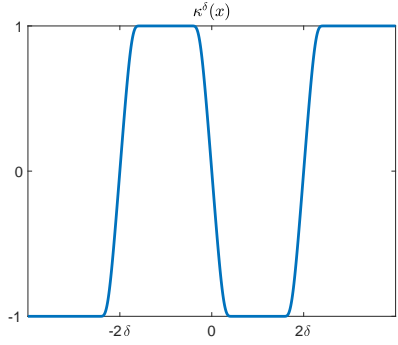}
\caption{Plot of $\kappa^\d(x)$ defined by \eqref{eq:three_dw_kappa} with $\kappa(x)$ given by \eqref{eq:explicit_dw_func} and $\d = 2$.}
\label{fig:3_dw_kappa}
\end{figure}
\begin{figure} 
\includegraphics[scale=.6]{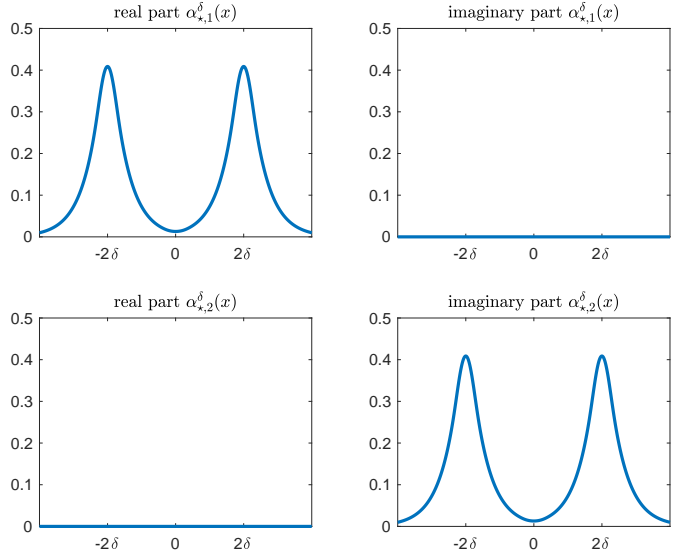}
\caption{Plot of the zero mode $\alpha^\d_\star(x)$ of $\mathcal{D}_{\kappa^\d}$ when $\kappa^\d(x)$ is given by \eqref{eq:three_dw_kappa}, given analytically by \eqref{eq:n_dw_zero_mode}.} 
\label{fig:3_dw_zero_mode}
\end{figure}

Introduce the following states, generated by the single domain wall zero mode:
\begin{equation} \label{eq:shifted_zero_modes}
	\alpha_\star^R := \alpha_\star(x - 2\d), \quad \alpha_\star^0 := \overline{\alpha_\star(x)}, \quad  \alpha_\star^L := \alpha_\star(x + 2\d),
\end{equation}
then Theorem \ref{th:main_theorem} generalizes to this case as follows: 
\begin{theorem} \label{th:3_dw}
Let Assumption \ref{as:assumption_on_D} hold and pick any compact interval $[-K,K] \subset (-\kappa_\infty,\kappa_\infty)$. Then, there is a constant $\d(K) \geq 2$ such that for all $\d > \d(K)$, \emph{in addition to the exact zero mode \eqref{eq:n_dw_zero_mode}}, the operator $\mathcal{D}_{\kappa^\d}$ has two near-zero eigenvalues $E_\pm(\d)$ in the interval $[-K,K]$, which satisfy: 
\begin{equation}
	E_\pm = \pm 2 \sqrt{2} \gamma^2 e^{- 2 \inty{0}{\d}{\kappa(y)}{y} } + O(e^{- 4 \kappa_\infty \d}).
\end{equation}
Their associated (normalized) eigenfunctions, which we denote $\alpha_\pm(x)$, may be written as approximate linear combinations of $\alpha_\star^\+(x), \alpha_\star^\-(x), \alpha_\star^0(x)$ as defined by \eqref{eq:shifted_zero_modes}:
\begin{equation} \label{eq:3_dw_modes}
\begin{split}
	&\alpha_+(x) = \frac{1}{2} \left( \alpha_\star^\+(x) + \sqrt{2} i \alpha_\star^0(x) - \alpha_\star^\-(x) \right) + O(e^{- 2 \kappa_\infty \d}) \\
	&\alpha_-(x) = \frac{1}{2} \left( \alpha_\star^\+(x) - \sqrt{2} i \alpha_\star^0(x) - \alpha_\star^\-(x) \right) + O(e^{- 2 \kappa_\infty \d}).
\end{split}
\end{equation}
\end{theorem}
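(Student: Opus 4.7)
The plan is to mirror the Lyapunov--Schmidt reduction of Theorem~\ref{th:main_theorem}, replacing the $2$-dimensional subspace $\mathrm{span}\{\alpha_\star^\+, \alpha_\star^\-\}$ by the $3$-dimensional subspace $\mathrm{span}\{\alpha_\star^\+, \alpha_\star^0, \alpha_\star^\-\}$ generated by the shifted zero modes~\eqref{eq:shifted_zero_modes}. First I would check that each $\alpha_\star^J$, for $J \in \{\+,0,\-\}$, is an approximate zero mode in the sense $\| \mathcal{D}_{\kappa^\d}\alpha_\star^J \|_\mathcal{H} = O(e^{-2\kappa_\infty \d})$, by the argument of Proposition~\ref{prop:app_zm}. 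Unlike the two domain wall setting of Lemma~\ref{orthog-sym}, the cross inner products $\ip{\alpha_\star^I}{\alpha_\star^J}_\mathcal{H}$ for $I \ne J$ no longer cancel exactly; however, as anticipated in Remark~\ref{not-orth}, they decay as $O(e^{-2\kappa_\infty \d})$ thanks to~\eqref{eq:alpha_for_x_large}. Hence the $3 \times 3$ Gram matrix is $\mathrm{Id} + O(e^{-2\kappa_\infty \d})$, the orthogonal projection $\Prol$ onto the complement $\Hrol$ is well defined and uniformly bounded, and $\{\alpha_\star^\+, \alpha_\star^0, \alpha_\star^\-\}$ behaves as an approximate orthonormal basis of a $3$-dimensional subspace for $\d$ large.

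Next I would prove the analog of Proposition~\ref{prop:prop_on_DL}: for $\d$ sufficiently large and $|E|\leq K < \kappa_\infty$, every $f \in \mathcal{H}^1 \cap \Hrol$ satisfies $\|(\mathcal{D}_{\kappa^\d}-E) f\|_\mathcal{H} \geq \tfrac{1}{2}(\kappa_\infty - K)\|f\|_\mathcal{H}$. This proceeds via a partition of unity localizing near each of the three transition cores at $x = -2\d, 0, +2\d$ and in the asymptotic regions between them, estimating the near-core contributions with Proposition~\ref{prop:prop_on_D} and using the constant-coefficient gap $\pm\kappa_\infty$ elsewhere, exactly as in Appendix~\ref{sec:proofs_1}. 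The analog of Corollary~\ref{F0} then reduces the eigenvalue problem to finding $(E, b) \in \field{C} \times \field{C}^3$ satisfying $M_{3\times 3}(\d, E) b = 0$, where
\begin{equation*}
    M^{IJ}(\d,E) = \ip{\alpha_\star^I}{\mathcal{D}_{\kappa^\d}\alpha_\star^J}_\mathcal{H} - E\ip{\alpha_\star^I}{\alpha_\star^J}_\mathcal{H} - \ip{\Prol \mathcal{D}_{\kappa^\d}\alpha_\star^I}{(\mathcal{D}_{\kappa^\d}-E)^{-1}\Prol \mathcal{D}_{\kappa^\d}\alpha_\star^J}_\mathcal{H}.
\end{equation*}

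A careful expansion of each $M^{IJ}(\d,E)$, in the spirit of Proposition~\ref{lem:bounds_on_terms_in_M}, would yield $M_{3\times 3}(\d, E) = M_0(\d) - E\,\mathrm{Id} + M_1(\d, E)$ with $\|M_1\| = O(e^{-4\kappa_\infty \d})$, where $M_0(\d)$ is the Hermitian tridiagonal matrix of Theorem~\ref{th:n_theorem} with off-diagonal entries $\pm 2i\gamma^2 e^{-2\int_0^\d \kappa(y)\,dy}$. Its characteristic polynomial is $-\lambda(\lambda^2 - 2 a^2)$ with $a := 2\gamma^2 e^{-2\int_0^\d \kappa}$, giving eigenvalues $\{0, \pm\sqrt{2}\,a\}$ and normalized eigenvectors $(1, 0, 1)/\sqrt{2}$ and $(1, \pm\sqrt{2}\,i, -1)/2$; these produce both the $\sqrt{2}$ factor in the asymptotics for $E_\pm$ and the decompositions~\eqref{eq:3_dw_modes}. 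A useful consistency check is that the exact zero mode~\eqref{eq:n_dw_zero_mode} must lie in $\ker M_{3\times 3}(\d, 0)$, forcing $E=0$ to be an exact root of $\det M_{3\times 3}(\d, E)$ for every $\d$, compatible with the $\lambda=0$ eigenvalue of $M_0(\d)$. Two applications of Rouché's theorem, on small circles of radius $\sim C e^{-2\kappa_\infty \d}$ around $0$ and $\pm\sqrt{2}\,a$ and on the large circle $|E| = K$, then complete the argument by providing exactly three eigenvalues in $[-K,K]$ with the stated asymptotics.

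The principal obstacle, beyond the $3\times 3$ bookkeeping, is the loss of exact orthogonality of the $\alpha_\star^J$. Tracking the resulting $O(e^{-2\kappa_\infty \d})$ cross terms $\ip{\alpha_\star^I}{\alpha_\star^J}_\mathcal{H}$ together with the corresponding corrections in $\Prol$, and confirming that they are absorbed into the remainder $M_1$ rather than polluting the leading tridiagonal structure of $M_0$, is the main point where the three domain wall argument diverges from the two domain wall case. A secondary but routine matter is adapting the partition-of-unity argument from two to three cores while still extracting the $(\kappa_\infty - K)/2$ lower bound in the basic energy estimate; no essentially new idea is required beyond the two-core argument already in place.
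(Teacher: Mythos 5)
Your proposal follows essentially the same route as the paper: the same three-dimensional Lyapunov--Schmidt reduction onto $\mathrm{span}\{\alpha_\star^\+,\alpha_\star^0,\alpha_\star^\-\}$, the same partition-of-unity energy estimate, the same tridiagonal leading matrix $M_0(\d)$ with spectrum $\{0,\pm\sqrt{2}\,a\}$ and eigenvectors $(1,0,1)/\sqrt{2}$, $(1,\pm\sqrt{2}i,-1)/2$, and the same Rouch\'e arguments (including the identification of the exact root at $E=0$ via the reduction's equivalence and the known exact zero mode). One small point worth noting when you carry out the expansion: the adjacent overlaps $\ip{\alpha_\star^\-}{\alpha_\star^0}_\mathcal{H}$ and $\ip{\alpha_\star^0}{\alpha_\star^\+}_\mathcal{H}$ in fact vanish \emph{exactly} by the spinor orthogonality $\ip{(1,-i)^\top}{(1,i)^\top}=0$ (only the far overlap $\ip{\alpha_\star^\-}{\alpha_\star^\+}_\mathcal{H}$ survives, at order $e^{-4\kappa_\infty\d}$), which is what lets the $-E\ip{\alpha_\star^I}{\alpha_\star^J}_\mathcal{H}$ cross terms be absorbed into $M_1$ with the $O(e^{-4\kappa_\infty\d})$ bound you assert, rather than the cruder $O(e^{-2\kappa_\infty\d})$ you initially allow.
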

For a numerical computation of the modes \eqref{eq:3_dw_modes}, see Figure \ref{fig:three_domain_walls_modes}. 
\begin{figure}
\includegraphics[scale=.6]{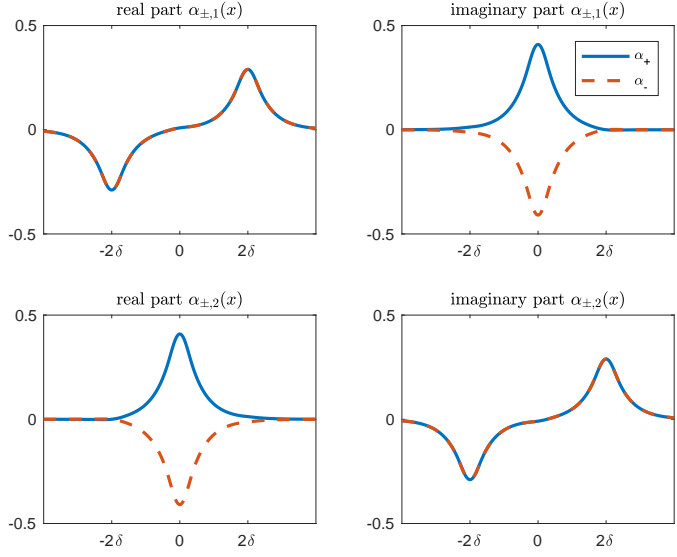}
\caption{Numerical computation of the two near-zero modes \eqref{eq:3_dw_modes} of the operator $\mathcal{D}_{\kappa^\d}$, for $\kappa^\d$ defined by \eqref{eq:three_dw_kappa}. $\mathcal{D}_{\kappa^\d}$ always has an exact zero mode given by \eqref{eq:n_dw_zero_mode}, see Figure \ref{fig:3_dw_zero_mode}.}
\label{fig:three_domain_walls_modes}
\end{figure}

\subsection{Proof of Theorem \ref{th:3_dw} on near-zero energy bound states of the three domain wall operator} \label{sec:3_dom_wall_proofs}
We now give the proof of Theorem \ref{th:3_dw}. Just as in Section \ref{sec:strategy_of_proof}, we seek a solution of \eqref{eq:eigenvalue_problem} as a linear combination of shifted zero modes of the one domain wall operator \eqref{eq:shifted_zero_modes} plus a corrector function:
\begin{equation}\label{3-ansatz}
	\alpha(x) = \sum_{j = \+, 0, \-} b^j \alpha^j_{\star}(x) + \eta(x),\quad {\rm where}\quad \eta\in {\rm span}\{\alpha_{\star}^R, \alpha_{\star}^0, \alpha_{\star}^L\}^\perp
\end{equation}

Unlike the the 2- domain wall case, the  decomposition 
\[\mathcal{H}={\rm span}\{\alpha_{\star}^R\} \oplus {\rm span} \{\alpha_{\star}^0\} \oplus {\rm span} \{ \alpha_{\star}^L\}\oplus {\rm span}\{\alpha_{\star}^R, \alpha_{\star}^0, \alpha_{\star}^L\}^\perp\] 
is not orthogonal. On the other hand, we have
\begin{lemma}\label{not-orthog}
\begin{equation}
	\ip{\alpha_\star^j}{\alpha_\star^j}_\mathcal{H} = 1,\qquad j\in\{\+,0,\-\}\ ,
\end{equation}
\begin{equation} \label{eq:bound_1_again}
	\ip{\alpha_\star^\-}{\alpha_\star^0}_\mathcal{H} = \ip{\alpha_\star^0}{\alpha_\star^\+}_\mathcal{H} = 0\ ,
\end{equation}
but
\begin{equation} \label{eq:bound_1_prime}
	0\ <\ \Bigl\lvert\ \ip{\alpha_\star^\-}{\alpha_\star^\+}_\mathcal{H}\ \Bigr\rvert \leq C e^{- 4 \kappa_\infty \d} \ .
\end{equation}
\end{lemma}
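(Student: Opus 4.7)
The plan is to verify each of the three claims by direct computation, exploiting the explicit spinor structure of the single-domain-wall zero mode \eqref{eq:zero_mode}. For convenience write $\Phi(x) := \inty{0}{x}{\kappa(y)}{y}$, which is real and, by oddness of $\kappa$, even: $\Phi(-x) = \Phi(x)$. The normalization $\ip{\alpha_\star^j}{\alpha_\star^j}_\mathcal{H} = 1$ for $j \in \{\+, 0, \-\}$ is immediate: $\alpha_\star$ is normalized in $\mathcal{H}$ by the choice of $\gamma$ in \eqref{eq:zero_mode}, and both spatial translation and pointwise complex conjugation act as isometries of $\mathcal{H}$.

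For the two vanishing inner products in \eqref{eq:bound_1_again}, I would reproduce the cancellation argument of Lemma \ref{orthog-sym}. From \eqref{eq:shifted_zero_modes} and \eqref{eq:zero_mode},
\begin{equation*}
    \alpha_\star^0(x) = \gamma \begin{pmatrix} 1 \\ -i \end{pmatrix} e^{-\Phi(x)}, \quad \alpha_\star^\+(x) = \gamma \begin{pmatrix} 1 \\ i \end{pmatrix} e^{-\Phi(x - 2\d)}, \quad \alpha_\star^\-(x) = \gamma \begin{pmatrix} 1 \\ i \end{pmatrix} e^{-\Phi(x + 2\d)}.
\end{equation*}
The pointwise $\field{C}^2$ inner product of the spinor parts satisfies
\begin{equation*}
    \ip{\begin{pmatrix} 1 \\ -i \end{pmatrix}}{\begin{pmatrix} 1 \\ i \end{pmatrix}} = 1 + \overline{(-i)}\, i = 1 + i\cdot i = 0,
\end{equation*}
so the pointwise integrands of $\ip{\alpha_\star^0}{\alpha_\star^\+}_\mathcal{H}$ and $\ip{\alpha_\star^\-}{\alpha_\star^0}_\mathcal{H}$ vanish identically in $x$, and both $\mathcal{H}$-inner products are zero.

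For the bound \eqref{eq:bound_1_prime}, the essential difference is that $\alpha_\star^\+$ and $\alpha_\star^\-$ share the spinor factor $\begin{pmatrix} 1 \\ i \end{pmatrix}$ (neither is complex conjugated), so the pointwise spinor inner product evaluates to $2$, yielding
\begin{equation*}
    \ip{\alpha_\star^\-}{\alpha_\star^\+}_\mathcal{H} = 2\gamma^2 \inty{\field{R}}{}{ e^{-\Phi(x + 2\d) - \Phi(x - 2\d)} }{x}.
\end{equation*}
This integrand is strictly positive everywhere, so the inner product is real and strictly positive, proving the lower bound. For the upper bound, I would exploit that the piecewise-constant structure \eqref{eq:properties_of_kappa} in Definition \ref{s-dw}(3) yields a constant $C_\kappa$ depending only on $\kappa$ such that $\Phi(x) \geq \kappa_\infty |x| - C_\kappa$ for all $x$. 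Combined with $\Phi(-x) = \Phi(x)$, this dominates the integrand by $e^{2 C_\kappa} e^{-\kappa_\infty (|x + 2\d| + |x - 2\d|)}$; the triangle inequality $|x + 2\d| + |x - 2\d| \geq 4\d$ (saturated exactly on $[-2\d, 2\d]$) then yields the claimed decay rate $4\kappa_\infty \d$ after splitting the integral into the middle region and the tails $\{|x| > 2\d\}$.

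No step presents a real obstacle. The only minor subtlety is that the middle region $[-2\d, 2\d]$ contributes a factor proportional to $\d$ (its length); this polynomial prefactor is harmless in all subsequent estimates and can be absorbed into the constant $C$ without affecting the leading exponential rate.
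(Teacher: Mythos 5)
Your proof is correct and follows essentially the same route as the paper: spinor orthogonality of $(1,\pm i)^{\top}$ kills the $\ip{\alpha_\star^\-}{\alpha_\star^0}_\mathcal{H}$ and $\ip{\alpha_\star^0}{\alpha_\star^\+}_\mathcal{H}$ pairings, and the pointwise decay $|\alpha_\star(x)|\le Ce^{-\kappa_\infty|x|}$ (equivalently your $\Phi(x)\ge\kappa_\infty|x|-C_\kappa$) together with splitting the line at $\pm 2\d$ gives the bound on $\ip{\alpha_\star^\-}{\alpha_\star^\+}_\mathcal{H}$. You additionally make the strict positivity in \eqref{eq:bound_1_prime} explicit (the paper leaves it implicit) and correctly flag the factor of $\d$ contributed by the middle region $[-2\d,2\d]$, which the paper's own computation also produces and silently absorbs into the constant.
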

\begin{proof} 
\eqref{eq:bound_1_again} follows by an identical argument to that given in the proof of Lemma \ref{orthog-sym}. The proof of \eqref{eq:bound_1_prime} is as follows. Using the bound \eqref{eq:alpha_for_x_large}:
\begin{equation}
\begin{split}
	\ip{ \alpha_\star^\- }{ \alpha_\star^\+ }_{\mathcal{H}} &= \inty{-\infty}{\infty}{ \gamma^2 \ip{ \begin{pmatrix} 1 \\ i \end{pmatrix} }{ \begin{pmatrix} 1 \\ i \end{pmatrix} } e^{- \inty{0}{x - 2 \d}{\kappa(y)}{y}} e^{- \inty{0}{x + 2 \d}{\kappa(y)}{y}} }{x}	\\ 
	&\leq 2 C^2 \gamma^2 \inty{-\infty}{\infty}{ e^{ - \kappa_\infty | x - 2 \d | } e^{ - \kappa_\infty | x + 2 \d | } }{x}.
\end{split}
\end{equation}
Splitting the integral over the real line into an integral over three disjoint intervals:
\begin{equation}
\begin{split}
	&2 C^2 \gamma^2 \inty{-\infty}{\infty}{ e^{ - \kappa_\infty | x - 2 \d | } e^{ - \kappa_\infty | x + 2 \d | } }{x}	\\
	&= 2 C^2 \gamma^2 \left[ \inty{-\infty}{- 2\d}{ e^{ \kappa_\infty ( x - 2 \d ) } e^{ \kappa_\infty ( x + 2 \d ) } }{x} \right.	\\
	&\hphantom{=}\left. + \inty{- 2\d}{2 \d}{ e^{ \kappa_\infty ( x - 2 \d ) } e^{ - \kappa_\infty ( x + 2 \d ) } }{x} + \inty{2\d}{\infty}{ e^{ - \kappa_\infty ( x - 2 \d ) } e^{ - \kappa_\infty ( x + 2 \d ) } }{x}  \right]	\\ 
	&= 2 C^2 \gamma^2 \left[ \inty{-\infty}{- 2\d}{ e^{ 2 \kappa_\infty x} }{x} + \inty{- 2\d}{2 \d}{ e^{ - 4 \kappa_\infty \d } }{x} + \inty{2\d}{\infty}{ e^{ - 2 \kappa_\infty x } }{x} \right]	\\ 
	&\leq 2 C^2 \left( \frac{1}{2 \kappa_\infty} + 4 \d + \frac{1}{2 \kappa_\infty} \right) e^{- 4 \kappa_\infty \d}.
\end{split}
\end{equation}

\end{proof} 

We want to argue, as in the 2- domain wall case, that the eigenvalue problem is equivalent to the set of four coupled equations obtained by substitution of \eqref{3-ansatz} into the equation $\mathcal{D}_{\kappa^\d}\alpha=E\alpha$ and setting to zero, individually, the projections onto
 ${\rm span}\{\alpha_\star^\+\}$, ${\rm span}\{\alpha_\star^\-\}$,  ${\rm span}\{\alpha_\star^0\}$ 
 and $\Hrol \equiv {\rm span}\{\alpha_\star^\+,\alpha_\star^0, \alpha_\star^\-\}^\perp$. For large $\delta$, this is justified by the following lemma. First define $\Prol$ to be the orthogonal projection of $\mathcal{H}$
  onto the subspace $\Hrol$. 
 \begin{lemma}\label{near-orthog}
 There exists $\delta_1>2$ such that for all $\delta>\delta_1$: If $f \in \mathcal{H}$, then: 
 \begin{equation}
 f=0\quad  \textrm{if and only if}\quad \Prol f\ =\ 0\quad {\rm and}\quad \ip{\alpha_\star^j}{f}=0\quad
  \textrm{for}\ j=\+,0,\-.
  \end{equation}
 \end{lemma}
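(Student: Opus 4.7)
The ``only if'' direction is immediate, since $f=0$ forces every inner product and every projection of $f$ to vanish. My plan is to reduce the ``if'' direction to invertibility of a $3\times 3$ Gram matrix which, by Lemma \ref{not-orthog}, is an exponentially small perturbation of the identity.

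First, recall that $\Prol$ is the orthogonal projection onto $\Hrol = \{\alpha_\star^\+,\alpha_\star^0,\alpha_\star^\-\}^\perp$, so its kernel is $\Hrol^\perp$. Because the span of $\alpha_\star^\+,\alpha_\star^0,\alpha_\star^\-$ is at most three-dimensional and in particular closed in $\mathcal{H}$, we have $\Hrol^\perp = {\rm span}\{\alpha_\star^\+,\alpha_\star^0,\alpha_\star^\-\}$. Therefore, the hypothesis $\Prol f = 0$ allows us to write
\begin{equation}
    f \;=\; a\,\alpha_\star^\+ \;+\; b\,\alpha_\star^0 \;+\; c\,\alpha_\star^\-
\end{equation}
for some $a,b,c\in\mathbb{C}$.

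Next, I would impose the three remaining conditions $\ip{\alpha_\star^j}{f}_\mathcal{H} = 0$ for $j\in\{\+,0,\-\}$. Using Lemma \ref{not-orthog}, namely that $\|\alpha_\star^j\|_\mathcal{H}^2 = 1$, that $\ip{\alpha_\star^\-}{\alpha_\star^0}_\mathcal{H} = \ip{\alpha_\star^0}{\alpha_\star^\+}_\mathcal{H} = 0$, and that $|\ip{\alpha_\star^\-}{\alpha_\star^\+}_\mathcal{H}| \le C e^{-4\kappa_\infty\d}$, this system reduces to $G\,(a,b,c)^\top = 0$ where the Gram matrix has the form
\begin{equation}
    G \;=\; \begin{pmatrix} 1 & 0 & \overline{\epsilon(\d)} \\ 0 & 1 & 0 \\ \epsilon(\d) & 0 & 1 \end{pmatrix}, \qquad \epsilon(\d) \;:=\; \ip{\alpha_\star^\-}{\alpha_\star^\+}_\mathcal{H},\qquad |\epsilon(\d)|\le Ce^{-4\kappa_\infty\d}.
\end{equation}

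Finally, I would observe that $G = I + E(\d)$ with $\|E(\d)\|_{\mathbb{C}^3\to\mathbb{C}^3} \le C e^{-4\kappa_\infty\d}$. Choosing $\d_1 > 2$ large enough that $Ce^{-4\kappa_\infty\d_1} < 1$, a Neumann series (equivalently, $\det G = 1 - |\epsilon(\d)|^2 \ne 0$) shows $G$ is invertible for all $\d>\d_1$, forcing $a=b=c=0$ and hence $f=0$. I do not anticipate any real obstacle here: the only nontrivial input is the exponentially small bound \eqref{eq:bound_1_prime} of Lemma \ref{not-orthog}, which has already been established, and the two exact cancellations \eqref{eq:bound_1_again} ensure that the only off-diagonal entries of $G$ are the two small ones involving $\epsilon(\d)$, so no further smallness estimates are needed.
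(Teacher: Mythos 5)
Your proof is correct, and it follows exactly the route the paper intends: the paper states Lemma \ref{near-orthog} without proof, but Remark \ref{not-orth} and Lemma \ref{not-orthog} make clear that the argument is to write $f\in\ker\Prol={\rm span}\{\alpha_\star^\+,\alpha_\star^0,\alpha_\star^\-\}$ and invert the Gram matrix, which is a perturbation of the identity by the exponentially small cross term $\ip{\alpha_\star^\-}{\alpha_\star^\+}_\mathcal{H}$. Your reduction to $\det G=1-|\epsilon(\d)|^2\neq 0$ is exactly the needed input, so there is nothing to add.
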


Now proceeding in a manner analogous to the 2- domain wall case, we have, for $E$ sufficiently small, that $E$ is an eigenvalue of \eqref{eq:eigenvalue_problem} if and only if there exists a nontrivial triple: $(b^\+ ,b^0,b^\- )$ which satisfies:
\begin{equation}\label{eq:reduced_equation_for_bs_again}
	\sum_{j \in \+,0,\-} M^{i j}(\d,E)  b^j = 0\ , \qquad\text{$i \in \{\+,0,\-\}$, } 
	\end{equation}
	where for $i, j=R,0,L$:
	\begin{equation}
	M^{ij}(\d,E) := \ip{ \alpha_\star^i }{ \mathcal{D}_{\kappa^\d} \alpha_\star^j }_\mathcal{H} - E \ip{ \alpha_\star^i }{ \alpha_\star^j } - \ip{ \Prol \mathcal{D}_{\kappa^\d} \alpha_\star^i }{ (\mathcal{D}_{\kappa^\d} - E)^{-1} \Prol \mathcal{D}_{\kappa^\d} \alpha_\star^j }_\mathcal{H}\ .
\end{equation}
The analogous statement to Corollary \ref{cor:corollary_on_DL} in this setting is now clear:
\begin{corollary} \label{cor:corollary_on_DL_2}
Fix $E$ such that $|E| \leq K$. There exists $\delta_1(K) \geq 2$ sufficiently large such that for all $\d > \delta_1(K)$ 
%
%
\begin{equation}
	\text{$E$ is an eigenvalue of $\mathcal{D}_{\kappa^\d}$ if and only if $\det M^{ij}(\d,E) = 0$.}
\end{equation}
\end{corollary}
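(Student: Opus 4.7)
The plan is to mirror the proof of Corollary \ref{cor:corollary_on_DL} in the two-domain-wall case, with two technical adjustments to account for the fact that $\{\alpha_\star^\+, \alpha_\star^0, \alpha_\star^\-\}$ is no longer orthogonal. The two main ingredients needed are (a) a three-domain-wall analog of Proposition \ref{prop:prop_on_DL} establishing that $\Prol(\mathcal{D}_{\kappa^\d} - E)$ is boundedly invertible on $\mathcal{H}^1 \cap \Hrol$ for $\d$ large and $|E| \leq K$, and (b) the Lyapunov--Schmidt/Schur-complement reduction using the direct-sum decomposition from Lemma \ref{near-orthog}. All lower bounds on $\delta$ that the argument imposes are finite in number, so their maximum gives the claimed $\delta_1(K)$.

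First I would prove, for $\d$ sufficiently large, $f \in \mathcal{H}^1 \cap \Hrol$, and $|E| \leq K$, that
\[
	\|(\mathcal{D}_{\kappa^\d} - E) f\|_\mathcal{H} \geq \frac{\kappa_\infty - K}{2}\, \|f\|_\mathcal{H},
\]
using the partition-of-unity strategy of Proposition \ref{prop:prop_on_DL} with bumps localized near each of the three domain-wall cores at $x = -2\d, 0, 2\d$ and a complementary bump on the asymptotic region. On each near-core piece $\kappa^\d$ coincides with a shift (or sign-reversed shift) of the single-domain-wall function, so Proposition \ref{prop:prop_on_D} applies, the needed orthogonality to the corresponding shifted zero-mode holding up to $O(e^{-2\kappa_\infty \d}\|f\|_\mathcal{H})$ corrections by Lemma \ref{not-orthog}. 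On the asymptotic piece the squared operator is pointwise bounded below by $\kappa_\infty^2$. Summing the localized estimates and absorbing the partition commutator terms (themselves $O(e^{-2\kappa_\infty \d}\|f\|_\mathcal{H}^2)$) yields the desired inequality. Solvability of $\Prol(\mathcal{D}_{\kappa^\d} - E)\psi = \phi$ on $\Hrol$ with bound $\|\Prol(\mathcal{D}_{\kappa^\d} - E)^{-1}\Prol\|_{\mathcal{H} \to \mathcal{H}} \leq 2/(\kappa_\infty - K)$ then follows exactly as in Corollary \ref{F0}, using the approximate zero-mode estimate $\|\mathcal{D}_{\kappa^\d} \alpha_\star^j\|_\mathcal{H} \leq C e^{-2\kappa_\infty \d}$ for $j \in \{\+, 0, \-\}$, whose proof is identical to that of Proposition \ref{prop:app_zm}.

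Given these tools, the reduction proceeds exactly as in Section \ref{sec:strategy_of_proof}. Lemma \ref{near-orthog} gives every $\alpha \in \mathcal{H}^1$ a unique decomposition $\alpha = \sum_{j \in \{\+,0,\-\}} b^j \alpha_\star^j + \eta$ with $\eta \in \mathcal{H}^1 \cap \Hrol$, and $\alpha = 0$ iff all $b^j$ and $\eta$ vanish. Substituting into $(\mathcal{D}_{\kappa^\d} - E)\alpha = 0$ and applying $\Prol$, together with $\Prol \alpha_\star^j = 0$, yields
\[
	\Prol(\mathcal{D}_{\kappa^\d} - E)\eta = -\sum_j b^j \Prol \mathcal{D}_{\kappa^\d} \alpha_\star^j,
\]
which the resolvent bound solves for $\eta$ as a linear function of $(b^\+, b^0, b^\-)$. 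Substituting into the three scalar equations obtained by pairing the eigenvalue equation with each $\alpha_\star^i$, and invoking self-adjointness of $\mathcal{D}_{\kappa^\d}$ and $\Prol$ to move operators across the inner product, produces the homogeneous system $\sum_j M^{ij}(\d,E) b^j = 0$ with $M^{ij}$ exactly as stated. Nontrivial $\alpha$ are therefore in bijection with nonzero elements of $\ker M(\d, E)$, proving the equivalence.

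The main obstacle relative to the two-domain-wall case is the loss of exact orthogonality: $\ip{\alpha_\star^\+}{\alpha_\star^\-}_\mathcal{H}$ is only $O(e^{-4\kappa_\infty \d})$ by Lemma \ref{not-orthog}, not zero. This shows up in Lemma \ref{near-orthog}, where the Gram matrix $G^{ij} = \ip{\alpha_\star^i}{\alpha_\star^j}_\mathcal{H}$ is a perturbation of the identity of size $O(e^{-4\kappa_\infty \d})$, hence invertible for $\d$ large, giving linear independence of the triple and justifying the direct-sum decomposition; and in the localized energy estimate, where $\Prol$-orthogonality controls inner products with each individual shifted zero-mode only up to exponentially small errors, which are absorbed into the margin between $\kappa_\infty$ and $(\kappa_\infty + K)/2$. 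Neither affects the algebraic derivation of $M^{ij}$, which relies only on inner products and self-adjointness and never on exact orthogonality of $\{\alpha_\star^\+, \alpha_\star^0, \alpha_\star^\-\}$.
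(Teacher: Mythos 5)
Your proposal is correct and follows essentially the same route as the paper, which states this corollary as ``now clear'' after Lemma \ref{near-orthog} and the reduction sketch, leaving exactly the details you supply: the partition-of-unity energy estimate and resolvent bound adapted to three cores, and the Lyapunov--Schmidt reduction made valid by the near-orthogonality of the shifted zero modes. Your observation that the Gram matrix is an $O(e^{-4\kappa_\infty\d})$ perturbation of the identity is the right justification for the direct-sum decomposition underlying Lemma \ref{near-orthog}, and no step of your argument conflicts with the paper's.
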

In analyzing the set of roots of $\det M^{ij}(\d,E)$, we will make use of the following result which is analogous to Proposition \ref{lem:bounds_on_terms_in_M}
%
%
\begin{proposition} \label{prop:bounds_on_terms_in_M_again}
Let $M(\d,E)$ be as in \eqref{eq:reduced_equation_for_bs_again}. The entries of $E\mapsto M(\d,E)$ are analytic in a neighborhood of $E=0$. Furthermore, the matrix  $M(\d,E)$ may be expanded as:
\begin{equation} \label{eq:M_ij_with_bounds_again}
	M(\d,E) = \begin{pmatrix} - E & - 2 i \gamma^2 e^{- 2 \inty{0}{\d}{\kappa(y)}{y}} & 0 \\ 2 i \gamma^2 e^{- 2 \inty{0}{\d}{\kappa(y)}{y}} & - E & - 2 i \gamma^2 e^{- 2 \inty{0}{\d}{\kappa(y)}{y} } \\ 0 & 2 i \gamma^2 e^{- 2 \inty{0}{\d}{\kappa(y)}{y}} & - E  \end{pmatrix} + M_1(\d,E)\ .
\end{equation}
Here, each entry of $M_1(\d,E)$ satisfies the bound: $| M^{ij}_1(\d,E) | \leq C e^{- 4 \kappa_\infty \d} + C E e^{- 4 \kappa_\infty \d}$ for some constant $C > 0$ independent of $\d, E$. 
\end{proposition}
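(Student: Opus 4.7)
The plan is to analyze the three constituents of $M^{ij}(\d,E)$ separately and combine them to yield the main matrix of \eqref{eq:M_ij_with_bounds_again} plus the $M_1$ remainder.

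\textbf{The term $\ip{\alpha_\star^i}{\mathcal{D}_{\kappa^\d}\alpha_\star^j}_\mathcal{H}$.} I would first exploit the anti-commutation $\sigma_2\mathcal{D}_{\kappa^\d} = -\mathcal{D}_{\kappa^\d}\sigma_2$ together with unitarity and self-adjointness of $\sigma_2$. A direct check using \eqref{eq:zero_mode} and \eqref{eq:shifted_zero_modes} gives $\sigma_2\alpha_\star^R = \alpha_\star^R$, $\sigma_2\alpha_\star^L = \alpha_\star^L$, and $\sigma_2\alpha_\star^0 = -\alpha_\star^0$. Writing $\epsilon_R = \epsilon_L = 1$, $\epsilon_0 = -1$, one obtains $\ip{\alpha_\star^i}{\mathcal{D}_{\kappa^\d}\alpha_\star^j}_\mathcal{H} = -\epsilon_i\epsilon_j \ip{\alpha_\star^i}{\mathcal{D}_{\kappa^\d}\alpha_\star^j}_\mathcal{H}$, forcing this matrix element to vanish identically on the diagonal and on the $(R,L)/(L,R)$ pair, matching the corresponding zeros of the leading matrix in \eqref{eq:M_ij_with_bounds_again}. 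For each nearest-neighbor pair, self-adjointness together with the identity $\mathcal{D}_{\kappa^\d}\alpha_\star^i = (\kappa^\d - \kappa^i)\sigma_1\alpha_\star^i$ (where $\kappa^i$ denotes the mass function of the single-wall operator annihilating $\alpha_\star^i$, e.g.\ $\kappa^R(x) = \kappa(x-2\d)$) recasts, for example, $\ip{\alpha_\star^R}{\mathcal{D}_{\kappa^\d}\alpha_\star^0}_\mathcal{H}$ as
\[
-2i\gamma^2 \int_{\field{R}} \bigl(\kappa^\d(x) - \kappa(x-2\d)\bigr)\, \exp\!\left(-\inty{0}{x-2\d}{\kappa(y)}{y} - \inty{0}{x}{\kappa(y)}{y}\right) dx.
\]
The integrand has compact support, and its exponent attains its maximum near the central wall, producing the leading term $-2i\gamma^2 e^{-2\inty{0}{\d}{\kappa(y)}{y}}$; the contribution from the region near the far-left wall at $x=-2\d$ is doubly exponentially suppressed and bounded by $O(e^{-4\kappa_\infty\d})$. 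The $(0,R)$ entry then follows by Hermitian conjugation, and the $(0,L)$, $(L,0)$ entries by the reflection symmetry $x\mapsto -x$, which preserves $\kappa^\d$ up to sign and exchanges $\alpha_\star^R$ with $\alpha_\star^L$ using that $\alpha_\star$ is an even function by oddness of $\kappa$.

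\textbf{The remaining terms.} For $-E\ip{\alpha_\star^i}{\alpha_\star^j}_\mathcal{H}$, Lemma \ref{not-orthog} gives exactly $-E$ on the diagonal, exactly $0$ on each nearest-neighbor pair $(R,0),(0,R),(0,L),(L,0)$, and $O(Ee^{-4\kappa_\infty\d})$ on $(R,L)$; these contribute to the $-E$ diagonal of the leading matrix and to $M_1$ respectively. For the resolvent term, I would first establish the three-wall analog of Proposition \ref{prop:prop_on_DL}: for $f\in\mathcal{H}^1\cap\Hrol$ and $|E|\leq K$,
\[
\|(\mathcal{D}_{\kappa^\d}-E)f\|_\mathcal{H} \geq \frac{\kappa_\infty-K}{2}\|f\|_\mathcal{H},
\]
via the IMS/partition-of-unity argument of Appendix \ref{sec:proofs_1}, now with three localizers near the domain wall cores at $0,\pm 2\d$ plus the far-field localizers; each localized piece is controlled by Proposition \ref{prop:prop_on_D} or the bulk estimate for $\mathcal{D}_{\pm\kappa_\infty}$. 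The analog of Corollary \ref{F0} then yields $\|\Prol(\mathcal{D}_{\kappa^\d}-E)^{-1}\Prol\|_{\mathcal{H}\to\mathcal{H}} \leq 2/(\kappa_\infty-K)$. Combined with Proposition \ref{prop:app_zm} (which applies here since nearest-neighbor walls remain $2\d$ apart) giving $\|\mathcal{D}_{\kappa^\d}\alpha_\star^i\|_\mathcal{H} = O(e^{-2\kappa_\infty\d})$, Cauchy--Schwarz yields the desired $O(e^{-4\kappa_\infty\d})$ bound. Analyticity of each entry in $E$ is then immediate, since the resolvent bound extends to a complex neighborhood of $[-K,K]$ via $\|(\mathcal{D}_{\kappa^\d}-E)f\|^2 = \|(\mathcal{D}_{\kappa^\d}-\Re E)f\|^2 + (\Im E)^2\|f\|^2$ on the relevant subspace, while the other two terms in $M^{ij}$ are manifestly analytic.

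\textbf{Main obstacle.} The principal technical step is the expansion of the neighbor-coupling integral: one must verify, uniformly in $\d$, that the contribution from the region near the intervening central wall produces the leading $e^{-2\inty{0}{\d}{\kappa(y)}{y}}$ factor while the contribution from the distant outer wall is genuinely $O(e^{-4\kappa_\infty\d})$, with careful bookkeeping of the transition regions where $\kappa$ is not piecewise constant. A secondary subtlety, flagged in Remark \ref{not-orth}, is that the $\alpha_\star^j$ are no longer mutually orthogonal in the three-wall case, so $\Prol$ does not decompose as a direct-sum projection onto orthogonal subspaces; manipulations such as $\Prol f = f - \sum_j \ip{\alpha_\star^j}{f}_\mathcal{H}\alpha_\star^j$ acquire $O(e^{-4\kappa_\infty\d})$ corrections from the cross-term $\ip{\alpha_\star^R}{\alpha_\star^L}_\mathcal{H}$, but these are of the same order as the overall $M_1$ estimate and so are naturally absorbed.
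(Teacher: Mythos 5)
Your proposal is correct in outcome and follows the same overall architecture as the paper's proof: decompose $M^{ij}(\d,E)$ into the Dirac matrix elements, the overlap term $-E\ip{\alpha_\star^i}{\alpha_\star^j}_\mathcal{H}$ (controlled by Lemma \ref{not-orthog}), and the Schur-complement/resolvent term, which is bounded by $Ce^{-4\kappa_\infty\d}$ exactly as in the paper via a three-core version of the partition-of-unity energy estimate, the resulting resolvent bound on $\Hrol$, the estimate $\|\mathcal{D}_{\kappa^\d}\alpha_\star^j\|_\mathcal{H}=O(e^{-2\kappa_\infty\d})$, and Cauchy--Schwarz. Where you differ is in the micro-arguments for the Dirac matrix elements: you obtain the vanishing of the diagonal and of the $(\+,\-)$ entries from the chirality relation $\sigma_2\mathcal{D}_{\kappa^\d}=-\mathcal{D}_{\kappa^\d}\sigma_2$ together with $\sigma_2\alpha_\star^{\+,\-}=\alpha_\star^{\+,\-}$, $\sigma_2\alpha_\star^0=-\alpha_\star^0$, and you transfer the $(\+,0)$ computation to $(0,\-)$ by parity (using that $\kappa^\d$ is odd and the profile of $\alpha_\star$ is even); the paper instead checks these identically-zero entries by the spinor orthogonality $\ip{(1,i)^{\top}}{(i,1)^{\top}}=0$ and computes both couplings directly. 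Both routes are exact, and your symmetry arguments are clean; just track the sign flip $P\mathcal{D}_{\kappa^\d}P=-\mathcal{D}_{\kappa^\d}$ when invoking parity so that the $(0,\-)$ entry comes out as $-2i\gamma^2e^{-2\inty{0}{\d}{\kappa(y)}{y}}$ as in \eqref{eq:M_ij_with_bounds_again}.

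The one place where your sketch is thinner than the paper is the evaluation of the neighbor-coupling integral. A ``maximum of the exponent'' heuristic only yields the order $e^{-2\kappa_\infty\d}$: in your integral with weight $\kappa^\d(x)-\kappa(x-2\d)$ the weight equals $2\kappa_\infty$ on the entire plateau between the central and left cores, and both the central transition region and an $O(1)$ portion of that plateau contribute at the leading order, so the exact prefactor $2\gamma^2 e^{-2\inty{0}{\d}{\kappa(y)}{y}}$ (including the factor $e^{2\kappa_\infty-2\inty{0}{1}{\kappa(y)}{y}}$ relative to $e^{-2\kappa_\infty\d}$) requires an exact manipulation rather than a localization estimate. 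The paper gets it by noting that the integrand vanishes identically on $[-\d,\d]$, where $\kappa^\d=-\kappa$, and then integrating by parts on the remaining half-line so that the leading term appears as a boundary term; equivalently, in your formulation one recognizes $(\kappa_\infty-\kappa(x))e^{\kappa_\infty x-\inty{0}{x}{\kappa(y)}{y}}$ as a total derivative and evaluates piecewise. You correctly flagged this as the principal technical step, and carrying it out as above (I checked that your integral does produce $-2i\gamma^2e^{-2\inty{0}{\d}{\kappa(y)}{y}}+O(e^{-4\kappa_\infty\d})$) closes the argument; your closing remark that the non-orthogonality of $\{\alpha_\star^\+,\alpha_\star^0,\alpha_\star^\-\}$ only enters at order $e^{-4\kappa_\infty\d}$ and is absorbed into $M_1$ is also consistent with how the paper handles it.
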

The proof of Proposition \ref{prop:bounds_on_terms_in_M_again} is given in Appendix \ref{sec:proof_3dw_lemma}. We are now in a position to prove Theorem \ref{th:3_dw}. Just as in the proof of Theorem \ref{th:main_theorem}, we compute the determinant of $M^{ij}$ \eqref{eq:reduced_equation_for_bs_again} making use of \eqref{eq:M_ij_with_bounds_again}:  
\begin{equation}
	\text{det } M^{ij}(\d,E) = E \left( 2 \left( 2 \gamma^2 e^{- 2 \inty{0}{\d}{\kappa(y)}{y} } \right)^2 - E^2 \right) + g(\d,E),
\end{equation}
where $g(\d,E)$ is analytic in $E$ and satisfies the bound:
\begin{equation}
	| g(\d,E) | \leq C ( e^{- 8 \kappa_\infty \d} + E e^{- 6 \kappa_\infty \d} + E^2 e^{- 4 \kappa_\infty \d} + E^3 e^{- 8 \kappa_\infty \d} )
\end{equation}
for some constant $C > 0$ independent of $\d, E$. Via arguments analogous to those given in Section \ref{sec:strategy_of_proof} below Proposition \ref{lem:bounds_on_terms_in_M} to prove Theorem \ref{th:main_theorem}, we may now conclude by an application of Rouch\'{e}'s theorem that $\mathcal{D}_{\kappa^\d}$ has precisely \emph{three} near-zero eigenvalues $E_+(\d), E_0(\d)$ and $E_-(\d)$ which satisfy: 
\begin{equation} \label{eq:3_dw_evalues}
\begin{split}
	&E_+(\delta) = 2 \sqrt{2} \gamma^2 e^{- 2 \inty{0}{\d}{\kappa(y)}{y} } + O(e^{- 4 \kappa_\infty \d})	\\
	&E_0(\d) = O(e^{- 4 \kappa_\infty \d})	\\
	&E_-(\d) = - 2 \sqrt{2} \gamma^2 e^{- 2 \inty{0}{\d}{\kappa(y)}{y} } + O(e^{- 4 \kappa_\infty \d}).
\end{split}
\end{equation}
Recall that the three-domain wall operator $\mathcal{D}_{\kappa^\d}$ has an explicit zero mode (eigenfunction with eigenvalue equal to zero) displayed in \eqref{eq:n_dw_zero_mode}. By a further application of Rouch\'{e}'s theorem, we find that $E_0(\d)$ is the unique eigenvalue of $\mathcal{D}_{\kappa^\d}$ in an appropriate ball of radius $c e^{- 4 \kappa_\infty \d}$ centered at $E = 0$ for some constant $c > 0$, and hence $E_0(\d)$ must be precisely zero:
\begin{equation}
	E_0(\d) = 0.
\end{equation}
Furthermore, the associated eigenfunction of $E_0(\d)$ must be given by \eqref{eq:n_dw_zero_mode}.
%
%

Substituting expressions \eqref{eq:3_dw_evalues} for $E_+(\delta)$ and $E_-(\delta)$ into \eqref{eq:reduced_equation_for_bs_again}, we obtain expansions of the associated eigenfunctions of these eigenvalues: 
\begin{equation}
\begin{split}
	&\begin{pmatrix} b^\+ \\ b^0 \\ b^\- \end{pmatrix}_+ = \frac{1}{4} \begin{pmatrix} 1 \\ \sqrt{2} i \\ - 1 \end{pmatrix} + O(e^{- 2 \kappa_\infty \d} ) 	\\
	&\begin{pmatrix} b^\+ \\ b^0 \\ b^\- \end{pmatrix}_- = \frac{1}{4} \begin{pmatrix} 1 \\ - \sqrt{2} i \\ -1 \end{pmatrix} + O(e^{- 2 \kappa_\infty \d} ).
\end{split}
\end{equation}
This completes the proof of Theorem \ref{th:3_dw}. 
%


\appendix
\section{For  $\kappa(x) = \tanh(x)$, point spectrum of $\mathcal{D}_\kappa$ is equal to $\{0\}$ } \label{sec:tanh_unique_evalue}
Let $\mathcal{D}_{\tanh}$ be defined by \eqref{eq:single_dom_wall_operator} with $\kappa(x) = \tanh(x)$:
\begin{equation}
	\mathcal{D}_{\tanh} := i \sigma_3 \de_x + \tanh(x) \sigma_1.
\end{equation}
It is clear that $\tanh(x)$ satisfies conditions \eqref{eq:weaker_1}-\eqref{eq:weaker_2} with $\kappa_\infty = 1$ and hence has continuous spectrum $(-\infty,-1] \cup [1,\infty)$. We will show that the only possible eigenfunction of $\mathcal{D}_{\tanh}$ with eigenvalue in the gap $(-1,1)$ is the exact zero mode \eqref{eq:zero_mode}. Ideas related to the argument given here are known as ``0-dimensional supersymmetry'' (see Chapter 6.3 of \cite{CyconFroeseKirschSimon}, for example).

Let $E$ denote any eigenvalue of $\mathcal{D}_{\tanh}$ in the interval $(-1,1)$. Let $\alpha = (\alpha_1,\alpha_2)^T$ denote the associated eigenfunction of such an eigenvalue, so that:
\begin{equation} \label{eq:dirac}
	\begin{pmatrix} i \de_x & \tanh(x) \\ \tanh(x) & - i \de_x \end{pmatrix} \begin{pmatrix} \alpha_1(x) \\ \alpha_2(x) \end{pmatrix} = E \begin{pmatrix} \alpha_1(x) \\ \alpha_2(x) \end{pmatrix}.
\end{equation}
Applying the unitary matrix:
\begin{equation} \label{eq:U}
	U := \frac{1}{\sqrt{2}} \begin{pmatrix} 1 & i \\ 1 & - i \end{pmatrix} 
\end{equation}
to both sides of \eqref{eq:dirac} we obtain an equivalent equation for $\beta = (\beta_1,\beta_2)^T := U \alpha$:
\begin{equation} \label{eq:dirac_again}
	\begin{pmatrix} 0 & i \de_x + i \tanh(x) \\ i \de_x - i \tanh(x) & 0 \end{pmatrix} \begin{pmatrix} \beta_1(x) \\ \beta_2(x) \end{pmatrix} = E \begin{pmatrix} \beta_1(x) \\ \beta_2(x) \end{pmatrix}.
\end{equation}
By squaring the operator on the left-hand side and using standard trigonometric identities, we see that $(\beta_1, \beta_2)^T$ must satisfy:
\begin{equation} \label{eq:dirac_squared}
	\begin{pmatrix} - \de_x^2 + 1 & 0 \\ 0 & - \de_x^2 + (1 - 2 \sech^2(x)) \end{pmatrix} \begin{pmatrix} \beta_1(x) \\ \beta_2(x) \end{pmatrix} = E^2 \begin{pmatrix} \beta_1(x) \\ \beta_2(x) \end{pmatrix}.
\end{equation}
We now claim that $\beta_1(x)$ must equal $0$. If not, \eqref{eq:dirac_squared} implies that $\beta_1(x)$ would be an eigenfunction of the operator $- \de_x^2 + 1$ with eigenvalue $E^2$, which by assumption on $E$ lies in the interval $[0,1)$. But this is impossible since the spectrum of $-\de_x^2 + 1$ is precisely $[1,\infty)$. 

Substituting $\beta_1 = 0$ into \eqref{eq:dirac_again}, we see that $\beta_2$ must satisfy: 
\begin{equation}
	( \de_x + \tanh(x) ) \beta_2(x) = 0, \quad E \beta_2(x) = 0.
\end{equation}
For a non-trivial solution we must have that $E = 0$ and:
\begin{equation}
	\beta_2(x) = C e^{- \inty{0}{x}{ \tanh(y) }{y} }
\end{equation}
for some non-zero complex constant $C$. Inverting the change of basis defined by $U$ \eqref{eq:U} and choosing the complex constant to ensure normalization we see that the only possible eigenfunction of $\mathcal{D}_{\tanh}$ with eigenvalue in the interval $(-1,1)$ is the zero mode \eqref{eq:zero_mode}. We remark finally that using standard identities, when $\kappa(x) = \tanh(x)$ the zero mode may be written in closed form:
\begin{equation}
	\alpha_\star(x) = \frac{1}{\sqrt{2} \pi} \begin{pmatrix} 1 \\ i \end{pmatrix} \sech(x)	
\end{equation}

\section{Relationship of $\mathcal{D}_{\kappa^\d}$ to Witten Laplacians} \label{ap:remark_on_schrodinger_link}
Let $U$ be the unitary matrix \eqref{eq:U} and $\mathcal{D}_{\kappa^\d}$ be the Dirac operator defined by \eqref{eq:def_dirac}. Conjugating $\mathcal{D}_{\kappa^\d}$ by $U$ yields the operator:
\begin{equation} \label{eq:onee}
    U \mathcal{D}_{\kappa^\d} U^* = \begin{pmatrix} 0 & i \de_x + i \kappa^{\d}(x) \\ i \de_x - i \kappa^\d(x) & 0 \end{pmatrix}\ =\ i\partial_x\sigma_1 -\kappa^\d(x)\sigma_2.
\end{equation}
Squaring this operator we derive 
\begin{equation} \label{eq:twoo}
    \left( U \mathcal{D}_{\kappa^\d} U^* \right)^2 = U \mathcal{D}_{\kappa^\d}^2 U^* = \begin{pmatrix} - \de_x^2 + ( \kappa^\d(x) )^2 - (\kappa^\d)'(x) & 0 \\ 0 & - \de_x^2 + ( \kappa^\d(x) )^2 + (\kappa^\d)'(x) \end{pmatrix}.
\end{equation}
Operators with the form
\begin{equation}
    - \de_x^2 + | f'(x) |^2 - f''(x),
\end{equation}
where $f(x)$ is a real function, are known as ``Witten Laplacians'' in one dimension \cite{1982Witten,CyconFroeseKirschSimon,HelfferNier}. The calculation \eqref{eq:onee}-\eqref{eq:twoo} shows that the square of $\mathcal{D}_{\kappa^\d}$ is unitarily equivalent to a diagonal operator whose diagonal entries are Witten Laplacians with $f'(x) = \pm \kappa^{\d}(x)$. 

The semi-classical analysis of ``low-lying'' (near zero) eigenvalues in the limit $\d \rightarrow \infty$ of such operators is well-established (see, for example, \cite{1980Harrell,1984Simon,1984HelfferRobert,1984HelfferSjostrand,1985HelfferSjostrand,1985KirschSimon,1986Nakamura,1986Martinez,1987KirschSimon,1988MartinezRouleux,1988GerardGrigis,Dimassi-Sjoestrand:99}) and could be used to provide an alternative proof of our our main results by the following argument. Note first that the operator \eqref{eq:onee} anti-commutes with the Pauli matrix $\sigma_3$ and hence every non-zero eigenvalue of the squared Dirac operator \eqref{eq:twoo} is two-fold degenerate. Second, note that given a basis of the degenerate two-dimensional subspace of \eqref{eq:twoo}, the correct eigenvalues and eigenfunctions of the original Dirac operator can be recovered by diagonalizing \eqref{eq:onee} on this degenerate subspace. Our results then follow from recognizing that the degenerate subspace of \eqref{eq:twoo} is spanned by the eigenfunctions of the Schr\"odinger operators on the diagonal multiplied by the standard basis functions in $\mathbb{C}^2$. Since these eigenfunctions and associated eigenvalues can be computed using semiclassical theory for Schr\"odinger operators, our results would follow. We prefer working with the Dirac operators directly instead. 

\section{Proof of Theorem \ref{th:main_theorem} (proofs of key Propositions)} \label{sec:proofs}
\subsection{Proof of Proposition \ref{prop:app_zm}} \label{sec:proofs_0}
Let $\alpha_\star^\+(x)$ and $\alpha_\star^\-(x)$ denote the shifted one domain wall zero modes defined by \eqref{eq:def_alpha_pm}, and $\mathcal{D}_{\kappa^\d}$ be the two domain wall Dirac operator defined by \eqref{eq:kappa_L}. Without loss of generality since estimating $\mathcal{D}_{\kappa^\d} \alpha_\star^\-$ is similar, we prove the estimate for $\mathcal{D}_{\kappa^\d} \alpha_\star^\+$ only. By definition:
\begin{equation}
	\| \mathcal{D}_{\kappa^\d} \alpha_\star^\+ \|_\mathcal{H}^2 = \inty{-\infty}{\infty}{ \left| \mathcal{D}_{\kappa^\d} \alpha^\+_\star(x) \right|^2 }{x}.
\end{equation}
Splitting the integral on the right-hand side into integrals over disjoint intervals gives:
\begin{equation}
	\inty{-\infty}{\infty}{ \left| \mathcal{D}_{\kappa^\d} \alpha^\+_\star(x) \right|^2 }{x} = \inty{-\infty}{- \d + 1}{ \left| \mathcal{D}_{\kappa^\d} \alpha^\+_\star(x) \right|^2 }{x} + \inty{- \d + 1}{\infty}{ \left| \mathcal{D}_{\kappa^\d} \alpha^\+_\star(x) \right|^2 }{x}.
\end{equation}
Acting on functions supported on the interval $[- \d + 1,\infty)$, the operators $\mathcal{D}_{\kappa^\d}$ and $\mathcal{D}^\+_{\kappa}$ are equal. Since $\mathcal{D}_{\kappa^\d} \alpha^\+_\star = 0$, the second integral is then equal to zero. The first integral may be bounded using boundedness of $\kappa$ and exponential decay of $\alpha^\+_\star$ \eqref{eq:alpha_for_x_large} yielding the estimate:
\begin{equation} \label{eq:estimate}
	\| \mathcal{D}_{\kappa^\d} \alpha_\star^\+ \|_\mathcal{H}^2 \leq C' \inty{-\infty}{-\d + 1}{ e^{- 2 \kappa_\infty | x - \d |} }{x}
\end{equation}
where $C' > 0$ is a constant depending only on $\kappa$. The integral on the right-hand side may be evaluated as follows:
\begin{equation}
\begin{split}
	&\inty{-\infty}{-\d + 1}{ e^{- 2 \kappa_\infty | x - \d |} }{x} = \inty{-\infty}{-\d + 1}{ e^{- 2 \kappa_\infty ( \d - x ) } }{x} \\ 
	&= e^{- 2 \kappa_\infty \d} \inty{-\infty}{-\d + 1}{ e^{ 2 \kappa_\infty x } }{x} = \frac{ e^{- 2 \kappa_\infty} }{ 2 \kappa_\infty } e^{- 4 \kappa_\infty \d}.
\end{split}
\end{equation}
Substituting into \eqref{eq:estimate} and taking the square root now yields the estimate: 
\begin{equation}
	\| \mathcal{D}_{\kappa^\d} \alpha_\star^\+ \|_\mathcal{H} \leq C e^{- 2 \kappa_\infty \d}
\end{equation}
where $C > 0$ is a constant depending only on the function $\kappa$. 

\subsection{Proof of Proposition \ref{prop:prop_on_DL}} \label{sec:proofs_1}
Let $f \in \mathcal{H}$ be such that: $\ip{\alpha_{\star}^R}{f}_\mathcal{H} = \ip{\alpha_{\star}^L}{f}_\mathcal{H} = 0$. We will prove assertion \eqref{eq:statement_lemma_on_DL} by bounding $\mathcal{D}_{\kappa^\delta} f$ from below on disjoint subsets of the real line and then summing these estimates
To this end, we introduce the partition of unity:
\begin{equation}
	1 = (\theta^\-(x))^2 + (\theta^0(x))^2 + (\theta^\+(x))^2
\end{equation}
where the functions $\theta^\+, \theta^\-, \theta^0$ are assumed to be smooth and to satisfy:
\begin{equation}
	\theta^\-(x) = \begin{cases} 1 &\text{ for } x \leq - \frac{\d}{2} \\ 0 &\text{ for } x \geq - \frac{\d}{4} \end{cases}
\end{equation}
\begin{equation}
	\theta^0(x) = \begin{cases} 1 &\text{ for } - \frac{\d}{4} \leq x \leq \frac{\d}{4} \\ 0 &\text{ for } x \leq - \frac{\d}{2} \text{ or } x \geq \frac{\d}{2} \end{cases}
\end{equation}
\begin{equation}
	\theta^\+(x) = \begin{cases} 1 &\text{ for } x \geq \frac{\d}{2} \\ 0 &\text{ for } x \leq \frac{\d}{4} \end{cases}.
\end{equation}
Using the partition of unity, we have that: 
\begin{equation} \label{eq:using_partition}
\begin{split}
	\| \mathcal{D}_{\kappa^\d} f \|_\mathcal{H}^2 &= \sum_{j = 0,\+,\-} \inty{\field{R}}{}{ (\theta^j(x))^2 | \mathcal{D}_{\kappa^\d} f(x) |^2 }{x} \\
	&= \sum_{j = 0,\+,\-} \| \theta^j \mathcal{D}_{\kappa_\d} f \|^2_{\mathcal{H}}. 
\end{split}
\end{equation}
Before continuing, we note two consequences of the definitions of the $\theta^j(x), j \in \{\+,\-,0\}$. Recall that $\d \geq 2$ and will later be taken as large as necessary. First, for each positive integer $n \geq 1$: 
\begin{equation} \label{eq:derivatives_theta}
	\quad \sup_{x \in \field{R}} | \de_x^n \theta^j(x) | \leq \frac{C}{\d^n}, \quad j \in \{\+,\-,0\}
\end{equation}
for positive constants $C > 0$ which are independent of $\d$. Second, if $\d \geq 2$, then:
\begin{equation}
	- \d + 1 \leq - \frac{\d}{2} < - \frac{\d}{4}. 
\end{equation} 
If follows that for $\d \geq 2$:
\begin{equation} \label{eq:kappa_on_supports}
\begin{split}
	&\text{for $x \in \text{supp} [\theta^\+]$}, \quad\kappa^\d(x) = \kappa(x - \d) 	\\
	&\text{for $x \in \text{supp} [\theta^0]$}, \quad\kappa^\d(x) = - \kappa_\infty 	\\
	&\text{for $x \in \text{supp} [\theta^\-]$}, \quad\kappa^\d(x) = - \kappa(x + \d).
\end{split}
\end{equation} 
\underline{We assume at this point that $\d \geq 2$ so that (\ref{eq:kappa_on_supports}) holds.}

We now demonstrate how to bound each term in \eqref{eq:using_partition} from below. By a trivial re-arrangement we have that: 
\begin{equation} \label{eq:commutator}
	\theta^j(x) \mathcal{D}_{\kappa^\d} = \mathcal{D}_{\kappa^\d} \theta^j(x) + [\theta^j(x),\mathcal{D}_{\kappa^\d}] = \mathcal{D}_{\kappa^\d} \theta^j(x) - i \de_x \theta^j(x) \sigma_3.
\end{equation} 
Hence:
\begin{equation}
	\| \theta^j \mathcal{D}_{\kappa^\d} f \|^2_{\mathcal{H}} = \| \mathcal{D}_{\kappa^\d} \theta^j f - i \sigma_3 \de_x \theta^j f \|^2_{\mathcal{H}}, \quad j \in \{0,\+,\-\}.
\end{equation}
We then proceed as follows:
\begin{align} \nonumber
	&\| \mathcal{D}_{\kappa^\d} \theta^j f - i \sigma_3 \de_x \theta^j f \|^2_{\mathcal{H}}  &	\\
	\nonumber &= \| \mathcal{D}_{\kappa^\d} \theta^j f \|^2_{\mathcal{H}} + \| \de_x \theta^j f \|^2_{\mathcal{H}} + 2 \text{Re} \ip{i \sigma_3 \de_x \theta^j f}{ \mathcal{D}_{\kappa^\d} \theta^j f}_{\mathcal{H}} &	\\
	\nonumber &\geq \| \mathcal{D}_{\kappa^\d} \theta^j f \|^2_{\mathcal{H}} + \| \de_x \theta^j f \|^2_{\mathcal{H}} - 2 \| \sigma_3 \de_x \theta^j f \|^2_{\mathcal{H}} \| \mathcal{D}_{\kappa^\d} \theta^j f \|^2_{\mathcal{H}} & \text{(Cauchy-Schwarz inequality)}	\\
	\nonumber &\geq (1 - \epsilon^2) \| \mathcal{D}_{\kappa^\d} \theta^j f \|^2_{\mathcal{H}} + (1 - \epsilon^{-2}) \| \de_x \theta^j f \|^2_\mathcal{H} & \text{(Young's inequality)}
\end{align}
for any positive $\epsilon > 0$. Taking $\epsilon$ sufficiently small so that $(1 - \epsilon^{-2}) \leq \epsilon^{-2}$ and using (\ref{eq:derivatives_theta}), we have that each term in \eqref{eq:using_partition} satisfies:
\begin{equation} \label{eq:commuted_theta}
\begin{split}
	\| \theta^j \mathcal{D}_{\kappa^\d} f \|^2_{\mathcal{H}} \geq ( 1 - \epsilon^2 ) \| \mathcal{D}_{\kappa^\d} \theta^j f \|_{\mathcal{H}}^2 - \epsilon^{-2} \frac{C_1}{\d^2} \| f \|^2_{\mathcal{H}}, \quad j \in \{0,\+,\-\}.
\end{split}
\end{equation}
for some constant $C_1 > 0$ which is independent of $\d$. We now study the terms:
\begin{equation}
	\| \mathcal{D}_{\kappa^\d} \theta^j f \|^2_{\mathcal{H}}, \quad j \in \{0,\+,\-\}.
\end{equation}
First, we consider $j = \+$. On the support of $\theta^\+$, $\kappa^\d(x) = \kappa(x - \d)$ \eqref{eq:kappa_on_supports}, hence:
\begin{equation} \label{eq:theta_+}
	\| \mathcal{D}_{\kappa^\d} \theta^\+ f \|^2_{\mathcal{H}} = \| \mathcal{D}^R_{\kappa} \theta^\+ f \|_{\mathcal{H}}^2,  \quad j \in \{0,\+,\-\}.
\end{equation}
where $\mathcal{D}^R_{\kappa} = i \sigma_3 \de_x + \kappa(x - \d)$ is the `shifted' one domain wall operator \eqref{eq:shifted_operators}. In order to bound \eqref{eq:theta_+} from below we use the fact that $\ip{\alpha^\+_\star}{f}_\mathcal{H} = 0$. We first show that this implies that $\ip{\alpha_\star^R}{\theta^\+ f}_{\mathcal{H}}$ is exponentially small in $\delta$: 
\begin{align*}
	&\ip{\alpha^\+_\star}{f}_\mathcal{H} = 0	& \\
	&\iff \ip{\alpha^\+_\star}{\theta^\+ f}_\mathcal{H} = - \ip{\alpha^\+_\star}{ (1 - \theta^\+) f }_\mathcal{H} &	\\
	&\implies \ip{\alpha^\+_\star}{ \theta^\+ f }_\mathcal{H} \leq \| \alpha^\+_\star (1 - \theta^\+ ) \|_\mathcal{H} \| f \|_\mathcal{H} &\text{(Cauchy-Schwarz)}
\end{align*}
but:
\begin{align*}
	&\| (1 - \theta^\+ ) \alpha^\+_\star \|_\mathcal{H}^2 = \inty{}{}{ | (1 - \theta^\+(x)) \alpha_\star(x - \d) |^2 }{x} &	\\
	&\leq \inty{-\infty}{\frac{\d}{2}}{ | \alpha_\star(x - \d) |^2 }{x} &\text{(Supp $(1 - \theta^\+ ) = [-\infty,\d/2]$)} 	\\ 
	&\leq C \inty{- \infty}{\frac{\d}{2}}{ e^{- 2 \kappa_\infty |x - \d|} }{x} = C \inty{- \infty}{\frac{\d}{2}}{ e^{2 \kappa_\infty (x - \d)} }{x} &\text{(Since $\d/2 \geq 1$, using (\ref{eq:alpha_for_x_large}))}	\\
	&= C e^{- 2 \kappa_\infty \d} \inty{-\infty}{\d/2}{ e^{2 \kappa_\infty x} }{x} \leq C e^{- \kappa_\infty \d} &
\end{align*}
where $C > 0$ is a constant which is independent of $\d$. We have therefore that:
\begin{equation} \label{eq:f_orthogonal}
	\ip{\alpha^\+_\star}{ \theta^\+ f }_\mathcal{H} \leq \| \alpha^\+_\star (1 - \theta^\+) \|_\mathcal{H} \| f \|_\mathcal{H} \leq C e^{- \kappa_\infty \d / 2} \| f \|_\mathcal{H}. 
\end{equation}
We now require the following corollary of Proposition \ref{prop:prop_on_D}:
\begin{corollary} \label{cor:corollary_to_lemma_on_D}
Let $f \in \mathcal{H}$. Then: 
\begin{equation}
	\| \mathcal{D}_\kappa f \|_\mathcal{H} \geq \kappa_\infty \left| \, \| f \|_\mathcal{H} - |\ip{\alpha_\star}{f}_\mathcal{H}| \, \right|.
\end{equation}
\end{corollary}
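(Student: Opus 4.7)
The plan is to derive the corollary from Proposition \ref{prop:prop_on_D} by decomposing $f$ into its components parallel and orthogonal to the zero mode $\alpha_\star$. First I would assume $f \in \mathcal{H}^1$ (otherwise $\|\mathcal{D}_\kappa f\|_\mathcal{H}$ is infinite and the inequality is trivial) and write
\[
f = c\, \alpha_\star + f^\perp, \qquad c := \ip{\alpha_\star}{f}_\mathcal{H}, \qquad f^\perp := f - c\, \alpha_\star,
\]
so that $\ip{\alpha_\star}{f^\perp}_\mathcal{H} = 0$ by construction, and $f^\perp \in \mathcal{H}^1$ since $\alpha_\star \in \mathcal{H}^1$ by Theorem \ref{th:on_dirac}.

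Next I would observe that $\mathcal{D}_\kappa \alpha_\star = 0$ by Theorem \ref{th:on_dirac}(7), so that $\mathcal{D}_\kappa f = \mathcal{D}_\kappa f^\perp$. Applying Proposition \ref{prop:prop_on_D} (with $E = 0$, using the fact that $\ip{\alpha_\star}{f^\perp}_\mathcal{H} = 0$) gives
\[
\| \mathcal{D}_\kappa f \|_\mathcal{H} \;=\; \| \mathcal{D}_\kappa f^\perp \|_\mathcal{H} \;\geq\; \kappa_\infty \, \| f^\perp \|_\mathcal{H}.
\]

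Finally I would bound $\|f^\perp\|_\mathcal{H}$ from below using the reverse triangle inequality together with $\|\alpha_\star\|_\mathcal{H} = 1$:
\[
\| f^\perp \|_\mathcal{H} \;=\; \| f - c\, \alpha_\star \|_\mathcal{H} \;\geq\; \bigl| \, \| f \|_\mathcal{H} - |c| \, \| \alpha_\star \|_\mathcal{H} \, \bigr| \;=\; \bigl| \, \| f \|_\mathcal{H} - |\ip{\alpha_\star}{f}_\mathcal{H}| \, \bigr|.
\]
Combining these two inequalities yields the desired estimate. There is no real obstacle here; the only subtlety is ensuring that the component $f^\perp$ lies in $\mathcal{H}^1$ so that Proposition \ref{prop:prop_on_D} applies, which is immediate since $\alpha_\star \in \mathcal{H}^1$.
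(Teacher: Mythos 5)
Your proof is correct and follows essentially the same route as the paper: decompose $f$ as its projection onto $\alpha_\star$ plus the orthogonal remainder $P^\perp_\star f = f - \ip{\alpha_\star}{f}_\mathcal{H}\alpha_\star$, use that $\alpha_\star$ is a zero mode together with Proposition \ref{prop:prop_on_D} (equivalently Assumption \ref{as:assumption_on_D}) to bound $\|\mathcal{D}_\kappa f\|_\mathcal{H}$ below by $\kappa_\infty\|P^\perp_\star f\|_\mathcal{H}$, and finish with the reverse triangle inequality. Your added remark about checking $f^\perp \in \mathcal{H}^1$ is a harmless refinement of a point the paper leaves implicit.
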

\begin{proof}
For any $f \in \mathcal{H}$, we may write $P^\perp_\star f \equiv f- \ip{\alpha_\star}{f}_\mathcal{H} \alpha_\star$. Since $\alpha_\star$ is a zero mode of $\mathcal{D}_\kappa$ and using Proposition \ref{prop:prop_on_D} we have:
\begin{equation}
	\| \mathcal{D}_\kappa f \|_\mathcal{H} = \| \mathcal{D}_\kappa P^\perp_\star f \|_\mathcal{H} \geq \kappa_\infty \| P^\perp_\star f \|_\mathcal{H}.
\end{equation}
Furthermore, 
\begin{equation}
	\| \mathcal{D}_\kappa f \|_\mathcal{H} \geq \kappa_\infty \| P_\star^\perp f \|_\mathcal{H} = \kappa_\infty \| f - \ip{\alpha_\star}{f}_\mathcal{H} \alpha_\star \|_\mathcal{H} \geq \kappa_\infty \left| \left( \| f \|_\mathcal{H} - |\ip{\alpha_\star}{f}_\mathcal{H}| \right) \right|.
\end{equation}
\end{proof}
It then follows from combining \eqref{eq:theta_+}, \eqref{eq:f_orthogonal}, and Corollary \ref{cor:corollary_to_lemma_on_D} that:
\begin{equation} \label{eq:+_part}
	\| \mathcal{D}_{\kappa^\d} \theta^\+ f \|^2_\mathcal{H} = \| \mathcal{D}^\+_{\kappa} \theta^\+ f \|^2_\mathcal{H} \geq \kappa_\infty^2 \left( 1 - C_2 e^{- \kappa_\infty \d / 2} \right)^2 \| \theta^\+ f \|^2_\mathcal{H}
\end{equation}
for some constant $C_2 > 0$ which is independent of $\d$. An identical argument shows that:
\begin{equation} \label{eq:-_part}
	\| \mathcal{D}_{\kappa^\d} \theta^\- f \|^2_\mathcal{H} \geq \kappa_\infty^2 \left( 1 - C_3 e^{- \kappa_\infty \d / 2} \right)^2 \| \theta^\- f \|^2_\mathcal{H},
\end{equation}
where $C_3 > 0$ is another constant which is independent of $\d$. Finally, we have that:
\begin{align}
\nonumber	&\| \mathcal{D}_{\kappa^\d} \theta^0 f \|^2_\mathcal{H} = \inty{}{}{ |(i \sigma_3 \de_x + \kappa^\d(x) )( \theta^0(x) f(x) ) |^2 }{x} & \\
\nonumber	&= \inty{}{}{ |(i \sigma_3 \de_x - \kappa_\infty \sigma_1 )( \theta^0(x) f(x) ) |^2 }{x} &\text{(Using (\ref{eq:kappa_on_supports}))}	\\
\label{eq:0_part} 	&= \ip{\theta^0 f}{ \left( i \sigma_3 \de_x - \kappa_\infty \sigma_1 \right)^2 \theta^0 f }_\mathcal{H} \geq \kappa_\infty^2 \| \theta^0 f \|^2_\mathcal{H}.
\end{align} 
Summing (\ref{eq:+_part}), (\ref{eq:-_part}), and (\ref{eq:0_part}) we see that for sufficiently large $\d \geq 2$: 
\begin{equation}
\begin{split}
	\sum_{j = 0,\+,\-} \| \mathcal{D}_{\kappa^\d} \theta^j f \|_\mathcal{H}^2 &\geq \kappa_\infty^2 \left( 1 - C_4 e^{- \kappa_\infty \d / 2} \right)^2 \sum_{j = 0,\+,\-} \| \theta^j f \|^2_\mathcal{H} 	\\
	&= \kappa_\infty^2 \left( 1 - C_4 e^{- \kappa_\infty \d / 2} \right)^2 \| f \|^2_\mathcal{H},
\end{split}
\end{equation}
where $C_4 := \max(C_2,C_3)$. Combining this with \eqref{eq:commuted_theta} we have that:
\begin{equation} \label{eq:est_on_D}
\begin{split}
	&\| \mathcal{D}_{\kappa^\d} f \|^2_\mathcal{H} \geq \left( (1 - \epsilon^2) \kappa_\infty^2 \left( 1 - C_4 e^{- \kappa_\infty \d/2} \right)^2 - \epsilon^{-2} \frac{C_1}{\d^2} \right) \| f \|^2_\mathcal{H} 	\\
\end{split}
\end{equation}
where the constants $C_1$ and $C_4$ do not depend on $\d$ or $\epsilon$. The proof of Proposition \ref{prop:prop_on_DL} is now as follows. Re-arranging \eqref{eq:est_on_D} we see that:
\begin{equation} \label{eq:est_on_D_2}
	\| \mathcal{D}_{\kappa^\d} f \|^2_\mathcal{H} \geq \kappa_\infty^2 \left( 1 - \epsilon^2 - 2 C_4 e^{- \kappa_\infty \d/2} - \epsilon^2 C_4^2 e^{- \kappa_\infty \d} - \frac{C_1 \epsilon^{-2} }{\kappa_\infty^2 \d^2} \right) \| f \|^2_\mathcal{H}.
\end{equation}
It now follows that by fixing $\epsilon$ sufficiently small and then $\d$ sufficiently large the constant multiplying $\| f \|^2_\mathcal{H}$ can be made arbitrarily close to $\kappa_\infty^2$ and hence, in particular, larger than:
\begin{equation}
	\left( \kappa_\infty - \frac{1}{2} (\kappa_\infty - K)^2 \right)^2 = \left( \frac{1}{2} ( \kappa_\infty + K ) \right)^2
\end{equation}
as required to prove the estimate \eqref{eq:statement_lemma_on_DL}. The second estimate \eqref{eq:second_part_lem_on_DL} follows from \eqref{eq:statement_lemma_on_DL} and an application of the triangle inequality. 

\subsection{Proof of Proposition \ref{lem:bounds_on_terms_in_M}} \label{sec:proofs_2}
Using self-adjointness of $\mathcal{D}_{\kappa^\d}$ and conjugate symmetry of $\ip{\cdot}{\cdot}_\mathcal{H}$, assertions \eqref{eq:M_ij_with_bounds} and \eqref{eq:resolvent_bounded} of Proposition \ref{lem:bounds_on_terms_in_M} follow immediately from:
$
	\ip{\alpha_\star^I}{\alpha_\star^J}_\mathcal{H} = \delta_{IJ},\ I,J\in\{R,L\}
$ (Lemma \ref{orthog-sym}) and  the following assertions:
\begin{equation}\label{eq:bd-1}
	\ip{\alpha_\star^\+}{\mathcal{D}_{\kappa^\d} \alpha_\star^\+}_\mathcal{H} = \ip{\alpha_\star^\-}{\mathcal{D}_{\kappa^\d} \alpha_\star^\-}_{\mathcal{H}} = 0,
\end{equation}
\begin{equation}
	\ip{\alpha_\star^\-}{\mathcal{D}_{\kappa^\d} \alpha_\star^\+}_\mathcal{H} = 2 i \gamma^2 e^{ - 2 \inty{0}{\delta}{ \kappa(y) }{y} },\label{eq:bd-2}
\end{equation}
\begin{equation} \label{eq:bound}
	\left| \ip{ \Prl \mathcal{D}_{\kappa^\d} \alpha_\star^i }{ (\mathcal{D}_{\kappa^\d} - E)^{-1} \Prl \mathcal{D}_{\kappa^\d} \alpha_\star^j }_\mathcal{H} \right| \leq C e^{- 4 \kappa_\infty \d}, \text{ for } i, j \in \{ \-, \+ \}.  
\end{equation}
We prove assertions \eqref{eq:bd-1}, \eqref{eq:bd-2}, and \eqref{eq:bound} in sections \ref{sec:2}-\ref{sec:4} below.

\subsubsection{Proof that $\ip{\alpha^\+_\star}{\mathcal{D}_{\kappa^\d} \alpha^\+_\star}_\mathcal{H} = \ip{\alpha^\-_\star}{\mathcal{D}_{\kappa^\d} \alpha^\-_\star}_\mathcal{H} = 0$} \label{sec:2}
Since showing $\ip{\alpha_\star^\-}{\mathcal{D}_{\kappa^\d} \alpha_\star^\-}_\mathcal{H} = 0$ is similar, we prove only that $\ip{\alpha_\star^\+}{\mathcal{D}_{\kappa^\d} \alpha_\star^\+}_\mathcal{H} = 0$ as follows:
\begin{equation}
\begin{split}
	&\ip{ \alpha_\star^\+ }{ \mathcal{D}_{\kappa^\d} \alpha_\star^\+ } = \inty{ -\infty }{ \infty }{ \ip{ \alpha_\star(x - \d) }{ \mathcal{D}_{\kappa^\d} \alpha_\star(x - \d) } }{x}	\\
	&= \inty{ -\infty }{ \infty }{ \ip{ \alpha_\star(x - \d) }{ \left( \sigma_3 i \de_x + \sigma_1 \kappa^\d(x) \right) \alpha_\star(x - \d) } }{x}.
\end{split}
\end{equation}
Explicit computation shows that:
\begin{equation} \label{eq:D_acting}
	(\sigma_3 i \de_x + \sigma_1 \kappa^\delta(x)) \begin{pmatrix} 1 \\ i \end{pmatrix} = \begin{pmatrix} i \\ 1 \end{pmatrix} (\de_x + \kappa^\delta(x))
\end{equation}
and hence:
\begin{equation}
\begin{split}
	\ip{ \alpha_\star^\+ }{ \mathcal{D}_{\kappa^\d} \alpha_\star^\+ } &= \gamma^2 \inty{- \infty}{\infty}{ \ip{ \begin{pmatrix} 1 \\ i \end{pmatrix} }{ \begin{pmatrix} i \\ 1 \end{pmatrix} } e^{- \inty{0}{x - \d}{ \kappa(y) }{y} } ( i \de_ x + \kappa^\d (x) ) e^{- \inty{0}{x - \d}{ \kappa(y) }{y} } }{x} \\
	&= 0,
\end{split}
\end{equation}
since $\ip{ \begin{pmatrix} 1 \\ i \end{pmatrix} }{ \begin{pmatrix} i \\ 1 \end{pmatrix} } = 0$. 

\subsubsection{Proof that $\ip{\alpha^\-_\star}{\mathcal{D}_{\kappa^\d}\alpha^\+_\star}_\mathcal{H} = 2 i \gamma^2 e^{- 2 \inty{0}{ \d}{\kappa(y)}{y}}$} \label{sec:3}
\begin{align*}
	\nonumber&\ip{\alpha_\star^\-}{\mathcal{D}_{\kappa^\d}\alpha_\star^\+}_\mathcal{H} = \inty{\field{R}}{}{ \ip{ \overline{ \alpha_\star(x + \d) } }{ \mathcal{D}_{\kappa^\d} \alpha_\star(x - \d) } }{x}&	\\
	\nonumber&= \inty{\field{R}}{}{ \ip{ \overline{ \alpha_\star(x + \d) } }{ ( \sigma_3 i \de_x + \sigma_1 \kappa^\d(x) ) \alpha_\star(x - \d)} }{x}. &	
\end{align*}
Using \eqref{eq:D_acting}, we have that:  
\begin{align*}
	\nonumber&= \gamma^2 \inty{\field{R}}{}{ \ip{ \begin{pmatrix} 1 \\ - i \end{pmatrix} }{ \begin{pmatrix} i \\ 1 \end{pmatrix}} e^{- \inty{0}{x + \d}{\kappa(y)}{y}}  ( \de_x + \kappa^\d(x) ) e^{- \inty{0}{x - \d}{\kappa(y)}{y}} }{x} &	\\
	\nonumber&= 2 i \gamma^2 \inty{\field{R}}{}{ e^{- \inty{0}{x + \d}{\kappa(y)}{y}}  ( \de_x + \kappa^\d(x) ) e^{- \inty{0}{x - \d}{\kappa(y)}{y}} }{x}. &	
\end{align*}
Since $\kappa^\d(x) = \kappa(x - \d)$ for $x \geq 0$:
\begin{equation*}
	\nonumber= 2 i \gamma^2 \inty{-\infty}{0}{ e^{- \inty{0}{x + \d}{\kappa(y)}{y}}  ( \de_x + \kappa^\d(x) ) e^{- \inty{0}{x - \d}{\kappa(y)}{y}} }{x} .
\end{equation*}
Integrating by parts:
\begin{equation*}
\begin{split}
	=& - 2 i \gamma^2 \inty{-\infty}{0}{ \left[ ( \de_x - \kappa^\d(x) ) e^{- \inty{0}{x + \d}{\kappa(y)}{y}} \right] e^{- \inty{0}{x - \d}{\kappa(y)}{y}} }{x} \\
	&+ 2 i \gamma^2 e^{- \inty{0}{ \d}{\kappa(y)}{y}} e^{- \inty{0}{- \d}{\kappa(y)}{y}}.
\end{split}
\end{equation*}
Finally, since $\kappa^\d(x) = - \kappa(x + \d)$ for $x \leq 0$:
\begin{equation*}
	= 2 i \gamma^2 e^{- \inty{0}{ \d}{\kappa(y)}{y}} e^{- \inty{0}{- \d}{\kappa(y)}{y}} = 2 i \gamma^2 e^{- 2 \inty{0}{\d}{\kappa(y)}{y}}, 
\end{equation*}
where the last equality uses the fact that $\kappa$ is odd: $\kappa(-x) = - \kappa(x)$. 

\subsubsection{Proof of \eqref{eq:bound}} \label{sec:4}
Estimate \eqref{eq:bound} follows immediately from Proposition \ref{prop:app_zm} ($\| \mathcal{D}_{\kappa^\d} \alpha^\+_\star \|_\mathcal{H}$ and $\| \mathcal{D}_{\kappa^\d} \alpha^\+_\star \|_\mathcal{H}$ are of $O(e^{- 2 \kappa_\infty \d})$), boundedness of the resolvent operator $(\mathcal{D}_{\kappa^\d} - E)^{-1} \Prl$ in $\mathcal{H}$ for $|E| \leq K$, and the Cauchy-Schwarz inequality. 


\section{Proof of Proposition \ref{prop:bounds_on_terms_in_M_again}} \label{sec:proof_3dw_lemma}
 Using self-adjointness of $\mathcal{D}_{\kappa^\d}$ and conjugate symmetry of $\ip{\cdot}{\cdot}_\mathcal{H}$, Proposition \ref{prop:bounds_on_terms_in_M_again} follows from Lemma \ref{not-orthog} 
 and the following assertions:
\begin{equation} \label{eq:bound_2_again}
\begin{split}
	&\ip{\alpha_\star^\-}{\mathcal{D}_{\kappa^\d} \alpha_\star^\-}_\mathcal{H} = \ip{\alpha_\star^0}{\mathcal{D}_{\kappa^\d} \alpha_\star^0}_\mathcal{H} = \ip{\alpha_\star^\+}{\mathcal{D}_{\kappa^\d} \alpha_\star^\+}_\mathcal{H} = 0	\\
	&\ip{\alpha_\star^\-}{\mathcal{D}_{\kappa^\d} \alpha_\star^\+}_\mathcal{H} = 0,
\end{split}
\end{equation}
\begin{equation} \label{eq:bound_3}
\begin{split}
	&\ip{\alpha_\star^\-}{\mathcal{D}_{\kappa^\d} \alpha_\star^0}_\mathcal{H} = 2 i \gamma^2 e^{ - 2 \inty{0}{\d}{ \kappa(y) }{y} } + O(e^{- 4 \kappa_\infty \d})	\\
	&\ip{\alpha_\star^0}{\mathcal{D}_{\kappa^\d} \alpha_\star^\+}_\mathcal{H} = 2 i \gamma^2 e^{ - 2 \inty{0}{\d}{ \kappa(y) }{y} } + O(e^{- 4 \kappa_\infty \d})	\\
\end{split}
\end{equation}
\begin{equation} \label{eq:bound_again}
\begin{split}
	\Big|\ \ip{ \Prl \mathcal{D}_{\kappa^\d} \alpha_\star^i }{ (\mathcal{D}_{\kappa^\d} - E)^{-1} \Prl \mathcal{D}_{\kappa^\d} \alpha_\star^j }_\mathcal{H}\ \Big|\ \leq \ C e^{- 4 \kappa_\infty \d}, \text{ for } i, j \in \{ \-, 0, \+ \}.  
\end{split}
\end{equation}
The proofs of assertions \eqref{eq:bound_2_again} and \eqref{eq:bound_again} parallel closely the discussions given above in sections \ref{sec:proofs_2} and \ref{sec:4} respectively and hence are omitted. The proof of assertion \eqref{eq:bound_3} is given below:
\subsubsection{Proof of assertion \eqref{eq:bound_3} } 
\begin{align}
\nonumber &\ip{\alpha_\star^\-}{\mathcal{D}_{\kappa^\d} \alpha_\star^0}_\mathcal{H} = \inty{\field{R}}{}{ \ip{\alpha_\star^\-(x)}{\mathcal{D}_{\kappa^\d} \alpha_\star^0(x)} }{x} &	\\
\nonumber &= \inty{\field{R}}{}{ \ip{ \alpha_\star(x + 2 \d) }{ (\sigma_3 i \de_x + \sigma_1 \kappa^\d(x) ) \overline{ \alpha_\star(x) } } }{x}. & \text{(by definition: \eqref{eq:shifted_zero_modes})}
\end{align}
Since $(\sigma_3 i \de_x + \sigma_1 \kappa^\d(x)) \begin{pmatrix} 1 \\ - i \end{pmatrix} = \begin{pmatrix} i \\ -1 \end{pmatrix} (\de_x - \kappa^\d)$, we have that:
\begin{equation}
\begin{split}
	&= \gamma^2 \ip{ \begin{pmatrix} 1 \\ i \end{pmatrix} }{ \begin{pmatrix} i \\ -1 \end{pmatrix} } \inty{\field{R}}{}{ e^{- \inty{0}{x + 2 \d}{ \kappa(y) }{y} } (\de_x - \kappa^\d) e^{- \inty{0}{x}{ \kappa(y) }{y} } }{x}.
\end{split}
\end{equation}
Using the definition of $\kappa^\d(x)$ \eqref{eq:three_dw_kappa}, we have that:
\begin{equation} \label{eq:terms}
\begin{split}
	&= 2 i \gamma^2 \left( \inty{\delta}{\infty}{ e^{- \inty{0}{x + 2 \d}{ \kappa(y) }{y} } (\de_x - \kappa(x - 2 \d) e^{- \inty{0}{x}{ \kappa(y) }{y} } }{x} \right. \\
	&\left. \hphantom{=} + \inty{- \infty}{-\delta}{ e^{ - \inty{0}{x + 2 \d}{\kappa(y)}{y} } (\de_x - \kappa(x + 2 \d)) e^{- \inty{0}{x}{ \kappa(y) }{y} } }{x} \right).
\end{split}
\end{equation}
We now show that the first term in \eqref{eq:terms} may be bounded by $C e^{- 4 \kappa_\infty \d}$. The bounds \eqref{eq:alpha_for_x_large} imply that:
\begin{equation} \label{eq:t_1}
\begin{split}
	&2 i \gamma^2 \inty{\delta}{\infty}{ e^{- \inty{0}{x + 2 \d}{ \kappa(y) }{y} } (\de_x - \kappa(x - 2 \d) e^{- \inty{0}{x}{ \kappa(y) }{y} } }{x} \\
	&\leq C \inty{\delta}{\infty}{ e^{- \kappa_\infty | x + 2 \d |} e^{- \kappa_\infty |x| } }{x} 	\\
	&\leq C e^{- 4 \kappa_\infty \d}. 
\end{split}
\end{equation}
The second term in \eqref{eq:terms} may be evaluated by integrating by parts: 
\begin{equation} \label{eq:t_2}
\begin{split}
	&2 i \gamma^2 \inty{-\infty}{-\d}{ e^{ - \inty{0}{x + 2 \d}{\kappa(y)}{y} } (\de_x - \kappa(x + 2 \d)) e^{- \inty{0}{x}{ \kappa(y) }{y} } }{x} 	\\
	&= 2 i \gamma^2 e^{ - \inty{0}{ \d }{ \kappa(y) }{y} } e^{- \inty{0}{- \d}{ \kappa(y) }{y} } 	\\
	&\phantom{=} - 2 i \gamma^2 \inty{-\infty}{-\d}{ (\de_x + \kappa(x + 2 \d)) e^{ - \inty{0}{x + 2 \d}{\kappa(y)}{y} }  e^{- \inty{0}{x}{ \kappa(y) }{y} } }{x} \\
	&= 2 i \gamma^2 e^{ - \inty{0}{ \d }{ \kappa(y) }{y} } e^{- \inty{0}{- \d}{ \kappa(y) }{y} } = 2 i \gamma^2 e^{ - 2 \inty{0}{\d}{ \kappa(y) }{y} },
\end{split}
\end{equation}
where the last equality follows from $\kappa$ being odd: $\kappa(-x) = - \kappa(x)$. The first assertion of \eqref{eq:bound_3} then follows from adding \eqref{eq:t_1} and \eqref{eq:t_2}. As for the second assertion, we have:
\begin{align}
\nonumber &\ip{\alpha_\star^0}{\mathcal{D}_{\kappa^\d} \alpha_\star^\+}_\mathcal{H} = \inty{\field{R}}{}{ \ip{\alpha_\star^0(x)}{\mathcal{D}_{\kappa^\d} \alpha_\star^\+(x)} }{x} &	\\
\nonumber &= \inty{\field{R}}{}{ \ip{ \overline{ \alpha_\star(x) } }{ (\sigma_3 i \de_x + \sigma_1 \kappa^\d(x) ) \alpha_\star(x - 2\d) } }{x}. & \text{(by definition: \eqref{eq:shifted_zero_modes})}
\end{align}
Using \eqref{eq:D_acting}, we have that:
\begin{equation}
\begin{split}
	&= \gamma^2 \ip{ \begin{pmatrix} 1 \\ - i \end{pmatrix} }{ \begin{pmatrix} i \\ 1 \end{pmatrix} } \inty{\field{R}}{}{ e^{- \inty{0}{x}{ \kappa(y) }{y} } (\de_x + \kappa^\d) e^{- \inty{0}{x - 2 \d}{ \kappa(y) }{y} } }{x}.
\end{split}
\end{equation}
Using the definition of $\kappa^\d(x)$ \eqref{eq:three_dw_kappa}, we have that:
\begin{equation} \label{eq:terms_again}
\begin{split}
	= &2 i \gamma^2 \left( \inty{- \delta}{\d}{ e^{- \inty{0}{x}{ \kappa(y) }{y} } (\de_x - \kappa(x)) e^{- \inty{0}{x - 2\d}{ \kappa(y) }{y} } }{x} \right. \\
	&\left. + \inty{- \infty}{-\delta}{ e^{ - \inty{0}{x}{\kappa(y)}{y} } (\de_x + \kappa(x + 2 \d)) e^{- \inty{0}{x - 2\d}{ \kappa(y) }{y} } }{x} \right).
\end{split}
\end{equation}
Using the bounds \eqref{eq:alpha_for_x_large}, we see that the second term in \eqref{eq:terms_again} may be bounded by $C e^{- 4 \kappa_\infty \d}$: 
\begin{equation} \label{eq:t_1_again}
	\inty{- \infty}{-\delta}{ e^{ - \inty{0}{x}{\kappa(y)}{y} } (\de_x + \kappa(x + 2 \d)) e^{- \inty{0}{x - 2\d}{ \kappa(y) }{y} } }{x} \leq C e^{- 4 \kappa_\infty \d}. 
\end{equation}
Integrating by parts in the first term in \eqref{eq:terms} then gives: 
\begin{equation} 
\begin{split}
	&2 i \gamma^2 \inty{- \d}{\d}{ e^{- \inty{0}{x}{ \kappa(y) }{y} } (\de_x - \kappa(x)) e^{- \inty{0}{x - 2\d}{ \kappa(y) }{y} } }{x}  \\
	&= 2 i \gamma^2 \left[ e^{- \inty{0}{x}{ \kappa(y) }{y} } e^{- \inty{0}{x - 2\d}{\kappa(y)}{y}} \right]_{-\d}^\d				\\ 
	&\phantom{=} - 2 i \gamma^2 \inty{- \d}{\d}{ (\de_x + \kappa(x)) e^{- \inty{0}{x}{ \kappa(y) }{y} }  e^{- \inty{0}{x - 2\d}{ \kappa(y) }{y} } }{x}  \\
	&= 2 i \gamma^2 \left[ e^{- \inty{0}{x}{ \kappa(y) }{y} } e^{- \inty{0}{x - 2\d}{\kappa(y)}{y}} \right]_{-\d}^\d				\\ 
	&= 2 i \gamma^2 e^{- \inty{0}{\delta}{ \kappa(y) }{y} } e^{- \inty{0}{- \delta}{\kappa(y)}{y}} - 2 i \gamma^2 e^{- \inty{0}{- \delta}{\kappa(y)}{y} } e^{- \inty{0}{- 3 \delta}{\kappa(y)}{y}} 
\end{split}
\end{equation}
which implies that:
\begin{equation} \label{eq:t_2_again}
\begin{split}
	&2 i \gamma^2 \inty{- \d}{\d}{ e^{- \inty{0}{x}{ \kappa(y) }{y} } (\de_x - \kappa(x)) e^{- \inty{0}{x - 2\d}{ \kappa(y) }{y} } }{x} 	\\
	&= 2 i \gamma^2 e^{- \inty{0}{\delta}{ \kappa(y) }{y} } e^{- \inty{0}{- \delta}{\kappa(y)}{y}} + O(e^{- 4 \kappa_\infty \d})	\\
	&= 2 i \gamma^2 e^{- 2 \inty{0}{\delta}{ \kappa(y) }{y} } + O(e^{- 4 \kappa_\infty \d}) 
\end{split}
\end{equation}
where the last equality holds because $\kappa$ is odd. Adding \eqref{eq:t_1_again} and \eqref{eq:t_2_again} implies the second part of assertion \eqref{eq:bound_3}.

\printbibliography

\end{document}